\newtheorem{defin}{Definition}[section]
\newtheorem{theorem}[defin]{Theorem}
\newtheorem{lemma}[defin]{Lemma}
\newtheorem{corollary}[defin]{Corollary}
\newtheorem{remark}[defin]{Remark}
\newcommand{\R}{\mathbb{R}}
\newcommand{\Z}{\mathbb{Z}}
\newcommand{\lset}{\left\{}
\newcommand{\rset}{\right\}}
\DeclareMathOperator{\id}{id}
\DeclareMathOperator{\conv}{conv}
\DeclareMathOperator{\ort}{O}
\DeclareMathOperator{\iso}{Iso}
\begin{document}

\title{Polarization of lattices: Stable cold spots and spherical designs}

\address{C.~Bachoc, Universit\'e de Bordeaux,
Institut de Math\'ematiques de Bordeaux,
351, cours de la Lib\'eration,
33405, Talence cedex,
France}

\address{P.~Moustrou, Institut de Mathe\'ematiques de Toulouse, UMR
  5219, UT2J, 31058 Toulouse, France}

\address{F.~Vallentin, Department Mathematik/Informatik, Abteilung
  Mathematik, Universit\"at zu K\"oln, Weyertal~86--90, 50931 K\"oln,
  Germany}

\address{M.C.~Zimmermann, Department Mathematik/Informatik, Abteilung
  Mathematik, Universit\"at zu K\"oln, Weyertal~86--90, 50931 K\"oln,
  Germany}
  
\author{Christine Bachoc}
\email{christine.bachoc@u-bordeaux.fr}
  
\author{Philippe Moustrou}
\email{philippe.moustrou@math.univ-toulouse.fr}

\author{Frank Vallentin}
\email{frank.vallentin@uni-koeln.de}

\author{Marc Christian Zimmermann}
\email{marc.christian.zimmermann@gmail.com}

\date{November 23, 2025}

\maketitle

\markboth{C. Bachoc, P. Moustrou, F. Vallentin, and M.C. Zimmermann}{Polarization of lattices: Stable cold spots and spherical designs}

\begin{abstract}
We consider the problem of finding the minimum of inhomogeneous Gaussian lattice sums: Given a lattice $L \subseteq \R^n$ and a positive constant $\alpha$, the goal is to find the minimizers of $\sum_{x \in L} e^{-\alpha \|x - z\|^2}$ over all $z \in \R^n$.

By a result of B\'etermin and Petrache from 2017 it is known that for steep potential energy functions---when $\alpha$ tends to infinity---the minimizers in the limit are found at deep holes of the lattice. In this paper, we consider minimizers which already stabilize for all $\alpha \geq \alpha_0$ for some finite $\alpha_0$; we call these minimizers stable cold spots.

Generic lattices do not have stable cold spots. For several important lattices, like the root lattices, the Coxeter-Todd lattice, and the Barnes-Wall lattice, we show how to apply the linear programming bound for spherical designs to prove that the deep holes are stable cold spots. We also show, somewhat unexpectedly, that the Leech lattice does not have stable cold spots. 
\end{abstract}

\setcounter{tocdepth}{1} 

\tableofcontents

\section{Introduction}

Let $L \subseteq \mathbb{R}^n$ be an $n$-dimensional lattice in
Euclidean space, and let $f \colon [0,\infty) \to \mathbb{R}$ be a
nonnegative function. The \textit{polarization} of the lattice $L$ with respect to the potential function $f$ is defined by
\[
\mathcal{P}(f, L) = \min_{z \in \mathbb{R}^n} p(f,L,z) \; \text{ with } \; p(f,L,z) = \sum_{x \in L} f(\|x - z\|^2) .
\]
One physical interpretation, proposed by Borodachov, Hardin, and Saff \cite[Chapter 14]{Borodachov-Hardin-Saff-2019}, is as follows:
If $f(\|x - z\|^2)$ represents the amount of a substance received at a point $z$ due to an injector located at $x$, which points receive the least substance when injectors are placed at all lattice points $x \in L$?

\subsection{Max-min polarization for compact manifolds}

Another closely related and natural problem is the \textit{max-min polarization problem} for compact manifolds. Given a compact manifold $\mathcal{M}$ with metric $d$, a potential function $f$, and a natural number $N$, the problem is to distribute $N$ points $x_1, \ldots, x_N$ on $\mathcal{M}$ so as to maximize the polarization of these points with respect to $f$. What is the optimal value of the bilevel optimization problem
\[
\max_{x_1, \ldots, x_N \in \mathcal{M}} \; \min_{z \in \mathcal{M}} \; \sum_{i=1}^N f(d(x_i, z)^2) \;\; ?
\]
As the sphere covering problem is the inhomogeneous counterpart of the sphere packing problem, the max-min polarization problem can similarly be viewed as the inhomogeneous variant of the \textit{potential energy minimization problem}
\[
\min_{x_1, \ldots, x_N \in \mathcal{M}} \sum_{i,j = 1, i \neq j}^N f(d(x_i,x_j)^2),
\]
which has been extensively studied, especially over the last two decades, after Cohn and Kumar \cite{Cohn2007a} introduced the notion of universally optimal point configurations. Due to its bilevel structure, the max-min polarization problem is significantly more difficult than the potential energy minimization problem. Only very few exact results are known.

For example, Borodachov \cite{Borodachov2022b} investigates the max-min polarization problem on the unit sphere $\mathcal{M} = S^{n-1}$ with $N = n+1$ points. He proves that the vertices of a regular simplex provide the unique optimal solution for a wide range of potential functions. While the symmetry of the regular simplex makes the result intuitive, Borodachov shows that even for well-behaved potential functions (decreasing and convex), two distinct cases have to be analyzed:
\begin{enumerate}
\item[(a)] When the minimizers $z$ lie at the vertices of the polar simplex (this happens when the derivative $f'$ is concave),
\item[(b)] When the minimizers $z$ coincide with the vertices of the regular simplex itself (this occurs if the derivative $f'$ is convex). 
\end{enumerate}
This fundamental distinction was first observed by Stolarsky \cite{Stolarsky-1975}.

\subsection{Max-min polarization for lattices}

When the manifold $\mathcal{M}$ is the non-compact Euclidean space $\R^n$, then the max-min polarization problem, restricted to lattices, becomes
\[
  \max_{L \subseteq \mathbb{R}^n} \min_{z \in \R^n} \sum_{x \in L} f(d(x, z)^2) = \max_{L \subseteq \mathbb{R}^n} \mathcal{P}(f,L),
\]
where one maximizes over $n$-dimensional lattices $L$ of a given point density.

For the choice of potential function, the \textit{Gaussian core model} is often considered; In this case, \textit{inhomogeneous Gaussian lattice sums} with
\[
f_\alpha(r) = e^{-\alpha r} \; \text{ and } \; p(f_\alpha, L, z) = \sum_{x \in L} e^{-\alpha \|x - z\|^2}
\]
are studied. This choice of potential functions is natural due to a theorem by Bernstein, see \cite[Theorem 9.16]{Simon-2011}, which states that every completely monotonic function $f : (0, \infty) \to \R$, that is, $f$ is $C^\infty$ and $(-1)^k f^{(k)} \geq 0$ for all $k \geq 0$, can be expressed as
\[
f(r) = \int_0^\infty e^{-\alpha r} \, d\mu(\alpha),
\]
for some measure $\mu$ on $[0,\infty)$. In particular, when a lattice is optimal for the max-min polarization problem for every $f_{\alpha}$, with $\alpha > 0$, then the lattice is called \textit{universally optimal for polarization}.

Recently, the max-min polarization problem for two-dimensional lattices was solved. B\'etermin, Faulhuber, and Steinerberger \cite{Betermin-Faulhuber-Steinerberger-2021} proved that the hexagonal lattice---the $A_2$ root lattice---is universally optimal for polarization among all two-dimensional lattices with the same point density.

\subsection{Deep holes and cold spots}

To solve this bilevel optimization, a thorough understanding of the inner minimization problem is required.
In the two-dimensional case, Baernstein II \cite{Baernstein-1997} analyzed the situation when $L = A_2$ is the hexagonal lattice. For every positive $\alpha$, he showed that $z \mapsto p(f_\alpha, L, z)$ attains its minimum at a deep hole $c$ of the lattice. A \textit{deep hole} of a lattice is a point $c$ that maximizes the distance to the nearest lattice points: The point $c$ satisfies
\[
\|x - c\| = \max_{z \in \R^n} \min_{x \in L} \|x - z\|.
\]
In his proof, Baernstein II used the maximum principle for the heat equation. He considers $p(f_\alpha,L,z)$ as the solution of the heat equation for the point $z$, where unit heat sources are located at the lattice points. Then for fixed $\alpha$, which represents the reciprocal of time, minimizing $p(f_\alpha,L,z)$ amounts to finding the temperature 
at the coolest points in the plane. Inspired by this physical interpretation, we propose to call the minimizers of $z \mapsto p(f_\alpha,L,z)$ \textit{cold spots}. We say that a cold spot is \textit{universal}, if it is a cold spot for every $\alpha > 0$, and thus for every completely monotonic potential. 

Now Baernstein's result states that for the hexagonal lattice, the deep holes are universal cold spots. However, he also shows that for two-dimensional lattices, other than the hexagonal lattice, deep holes and cold spots generally differ.

Finding the cold spots is, in general, a difficult problem. Currently no computational method to exactly determine cold spots exist. While one can compute partial sums of the series
$p(f_\alpha, L, z)$ and approximate local minima numerically, obtaining rigorous performance guarantees would require significant additional work.

In the limiting case of very steep potential functions, as $\alpha \to \infty$, one naturally expects that the max-min polarization converges to the lattice sphere covering problem
\[
\min_{L \subseteq \R^n} \max_{z \in \R^n} \min_{x \in L} \|x - z\|,
\]
where the minimization is over $n$-dimensional lattices of given point density.

This is indeed the case, as shown by B\'etermin and Petrache \cite[Theorem 1.5]{Betermin-Petrache-2017}: They proved that as $\alpha \to \infty$ the cold spots of $L$ tend to deep holes. More precisely, they showed that there exists a deep hole $c$ of $L$ such that, for any $z \in \R^n$, there exists a threshold $\alpha_z$ such that for all $\alpha > \alpha_z$,
\[
\sum_{x \in L} e^{-\alpha\|x-z\|^2} \geq \sum_{x \in L} e^{-\alpha\|x-c\|^2}.
\]

\subsection{First observations about lattices with ``highly symmetric" deep holes} \label{sec:Introduction:highlysymmetric}

The question now arises: Under which conditions do cold spots \textit{coincide} with deep holes, not just in the limit, but already for finite $\alpha$. This is known to occur for the hexagonal lattice. We call a cold spot \textit{stable}, if it is a cold spot for every $\alpha \geq \alpha_0$ for some finite $\alpha_0$.  One expects that stable cold spots are deep holes for lattices whose deep holes are ``highly symmetric". 

We start by some first, easy observations. If the potential function $f$ is differentiable, then the critical points of the function $z \mapsto p(f, L, z)$ are given by the points $z$ for which the gradient
\[
\nabla p(f, L, z) = \sum_{x \in L} 2 f'(\|x-z\|^2) (x-z)
\]
vanishes. The Hessian is
\[
\nabla^2 p(f, L, z) = \sum_{x \in L} \left(2 f'(\|x-z\|^2) I_n + 4
f''(\|x-z\|^2) (x-z)(x-z)^{\sf T}\right),
\]
where $I_n$ denotes the identity matrix with $n$ rows/columns.

When the lattice is highly symmetric around $z$, one can simplify the computation of $p(f, L, z)$ as well as its gradient and Hessian. For this we group the summands occurring in the above series into \textit{inhomogeneous shells} around the point $z$
\[
L(z,r) = \{x \in L : \|x-z\| = r\}.
\]
We are interested in the case that all these nonempty, inhomogeneous shells carry spherical designs, where we say that a finite set $X$ on the sphere of radius $r$ with center $z$, denoted by
\[
S^{n-1}(z,r) = \{x \in \R^n : \|x - z\| = r\}
\]
is a \textit{spherical $M$-design} if every polynomial
$p$ of total degree at most $M$ has the same average over the sphere as over
the set $X$. That is,
\[
\int_{S^{n-1}(0,r)} p(y) \, d\omega(y) = \frac{1}{|X|} \sum_{x \in X} p(x-z)
\]
holds for every such polynomial $p$. Here we integrate with respect to the rotationally invariant probability measure on $S^{n-1}(0,r)$. The maximum number $M$ for which $X$ is a spherical $M$-design is called the \textit{strength} of the spherical design.

The fact that $X$ is a spherical $1$-designs amounts to the balancedness of $X$ around~$z$,
\[
\sum_{x \in X} (x - z) = 0,
\]
in other words, the point $z$ is the barycenter of $X$.
Clearly, when every non-empty inhomogeneous shell $L(z,r)$, with $r \geq 0$, forms a spherical $1$-design, then $z$ is a critical
point of $z \mapsto p(f, L, z)$. 
Interestingly, for the family $f_\alpha$ of Gaussian potential functions also the following converse holds.

\begin{theorem}
\label{thm:stable-critical-points}
Let $L \subseteq \mathbb{R}^n$ be an $n$-dimensional lattice and let $f_\alpha(r) = e^{-\alpha r}$ be a Gaussian potential function. 
For a point $z$ the following statements are equivalent:
\begin{enumerate}
  \item \label{thm:stable-critical-points:1}Every non-empty inhomogeneous shell $L(z,r)$, with $r \geq 0$, forms a spherical $1$-design.
  \item \label{thm:stable-critical-points:2}$z$ is critical for $z \mapsto p(f_\alpha,L,z)$ for all $\alpha > 0$.
  \item \label{thm:stable-critical-points:3}There exists an $\alpha_0$ such that $z$ is critical for $z \mapsto p(f_\alpha,L,z)$ for every $\alpha \geq \alpha_0$.
\end{enumerate}
We will call a point $z$ satisfying the equivalent conditions above a \emph{stable} critical point.
\end{theorem}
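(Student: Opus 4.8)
The plan is to convert the criticality condition into a statement about ``shell vectors'' and then to exploit the linear independence of exponential functions with distinct exponents. Since $f_\alpha'(r) = -\alpha e^{-\alpha r}$, the gradient formula from the excerpt gives $\nabla p(f_\alpha, L, z) = -2\alpha \sum_{x \in L} e^{-\alpha \|x-z\|^2}(x-z)$, so $z$ is critical for a given $\alpha > 0$ if and only if $\sum_{x \in L} e^{-\alpha \|x-z\|^2}(x-z) = 0$. Grouping this absolutely convergent series by the inhomogeneous shells $L(z,r)$, on each of which $\|x-z\|$ is the constant $r$, the condition becomes $\sum_{r} e^{-\alpha r^2} v_r = 0$, where $v_r = \sum_{x \in L(z,r)}(x-z)$. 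The crucial elementary observation is that $L(z,r)$ is a spherical $1$-design exactly when $z$ is its barycenter, that is, when $v_r = 0$, since the average of each coordinate function $y \mapsto y_i$ over $S^{n-1}(0,r)$ vanishes.

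With this reformulation, two of the implications are immediate. For $(1) \Rightarrow (2)$, the hypothesis says $v_r = 0$ for every nonempty shell, so every term of the gradient series vanishes and $z$ is critical for all $\alpha > 0$. The implication $(2) \Rightarrow (3)$ is trivial, taking any $\alpha_0 > 0$.

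The content is in $(3) \Rightarrow (1)$. Here I would enumerate the distinct radii $0 \le r_0 < r_1 < r_2 < \cdots$ (each shell being finite, with $r_k^2 \to \infty$) and observe that the hypothesis gives the identity $\sum_{k} e^{-\alpha r_k^2} v_k = 0$ for every $\alpha \ge \alpha_0$. This is a vector-valued generalized Dirichlet series vanishing on a half-line, and the goal is to conclude $v_k = 0$ for all $k$. Arguing by contradiction, let $k^\ast$ be the smallest index with $v_{k^\ast} \ne 0$; multiplying the identity by $e^{\alpha r_{k^\ast}^2}$ and letting $\alpha \to \infty$ should isolate $v_{k^\ast}$, since the lower terms vanish by minimality and each higher term carries a factor $e^{-\alpha(r_k^2 - r_{k^\ast}^2)} \to 0$.

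The main obstacle is precisely this passage to the limit: because the lattice is infinite there are infinitely many shells, so I cannot simply invoke linear independence of finitely many exponentials and must justify interchanging the limit with the infinite summation. The remedy is the rapid Gaussian decay: the gradient series converges absolutely at $\alpha = \alpha_0$, and for $\alpha \ge \alpha_0$ the tail $\sum_{k > k^\ast} \|v_k\| e^{-\alpha(r_k^2 - r_{k^\ast}^2)}$ is dominated by $e^{-(\alpha-\alpha_0)(r_{k^\ast+1}^2 - r_{k^\ast}^2)}$ times the convergent constant $\sum_{k>k^\ast}\|v_k\|\,e^{-\alpha_0(r_k^2 - r_{k^\ast}^2)}$, hence tends to $0$ uniformly as $\alpha \to \infty$. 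This forces $v_{k^\ast} = 0$, a contradiction, completing the proof that every nonempty shell is balanced and therefore a spherical $1$-design.
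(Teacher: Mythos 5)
Your proposal is correct and follows essentially the same route as the paper: reformulate criticality as the vanishing of $\sum_r e^{-\alpha r^2} v_r$ with shell vectors $v_r$, note that $(1)\Rightarrow(2)\Rightarrow(3)$ is immediate, and for $(3)\Rightarrow(1)$ isolate the smallest radius with nonzero shell vector and derive a contradiction by letting $\alpha \to \infty$, since the remaining terms decay to zero. Your explicit domination of the tail by $e^{-(\alpha-\alpha_0)(r_{k^\ast+1}^2 - r_{k^\ast}^2)}$ times a convergent constant is a slightly more careful justification of the limit--sum interchange that the paper handles in one terse sentence, but it is the same argument.
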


\begin{proof}
Clearly, \eqref{thm:stable-critical-points:1} $\Rightarrow$ \eqref{thm:stable-critical-points:2} $\Rightarrow$ \eqref{thm:stable-critical-points:3}.
We are left to show that \eqref{thm:stable-critical-points:3} implies \eqref{thm:stable-critical-points:1}.

So assume \eqref{thm:stable-critical-points:3} and write
\[
\nabla p(f_\alpha, L, z) = \sum_{x \in L} -2\alpha e^{-\alpha \|x-z\|^2} (x-z) = \sum_{r \geq 0} -2\alpha e^{-\alpha r^2} \sum_{x \in L(z,r)} (x-z).
\]
Now assume that there exists some $r$ such that $\sum_{x \in L(z,r)} (x-z) \neq 0$, that is the shell $L(z,r)$ is not a spherical $1$-design.
We write $d_r = \sum_{x \in L(z,r)} (x-z)$ and choose $r_0$ such that $L(z,r_0)$ is non-empty, $d_{r_0} \neq 0$ and $r_0$ is as small as possible.
Then 
\[
  \nabla p(f_\alpha, L, z) = 0 \quad \Longleftrightarrow \quad d_{r_0} + \sum_{r > r_0} e^{-\alpha (r^2-r_0^2)} d_r = 0.
\]
Now $d_{r_0} \neq 0$ and all $d_r$ are independent of $\alpha$ but 
\[
  \sum_{r > r_0} e^{-\alpha (r^2-r_0^2)} d_r = -d_{r_0} \neq 0
\]
is not.
If we bound this term, coordinate-wise, by absolute value, we obtain a strictly monotonically decreasing function in $\alpha$, which tends to $0$.
But then it cannot be true that $d_{r_0} + \sum_{r > r_0} e^{-\alpha (r^2-r_0^2)} d_r = 0$ for all $\alpha \geq \alpha_0$, contradicting assumption~\eqref{thm:stable-critical-points:3}.
\end{proof}

We want to extract some geometric intuition from this theorem. For this, it is convenient to briefly discuss the Voronoi cells and the Delaunay polytopes of a lattice. For more on Delaunay polytopes, spherical designs and the lattice sphere covering problem we refer to \cite{DSV-2008} and \cite{DSV-2012}.

The Voronoi cell
\[
V(L) = \{y \in \mathbb{R}^n : \|y - x\| \leq \|y\| \text{ for all } x \in L\}
\]
of a lattice $L$ tiles space through lattice translations and provides  a polytopal decomposition of $\R^n$. 
The vertices of this polytopal decomposition are lattice translates of the vertices of $V(L)$ and are called holes of the lattice. Those with maximum distance to the nearest lattice vectors are called deep holes of the lattice and this distance is the \textit{covering radius} $\mu(L)$ of $L$.

The polytopal decomposition that is geometrically dual to the Voronoi decomposition is called the Delaunay subdivision. This subdivision is composed of Delaunay polytopes, where each point $z \in \R^n$ determines a Delaunay polytope by forming the convex hull of all lattice points closest to $z$. Specifically, the Delaunay polytope is the convex hull of the non-empty inhomogeneous shell $L(z,r)$ with the smallest possible radius $r \geq 0$. Holes are exactly the centers of the circumsphere of full-dimensional Delaunay polytopes.

Theorem~\ref{thm:stable-critical-points} implies that stable critical points are quite special. Candidates for stable critical points can be identified using the Delaunay decomposition. For a point to be a stable critical point, it is necessary to be the barycenter of the vertices of its Delaunay polytope, thus the barycenter of the vertices of a face of a full-dimensional Delaunay polytope. 
%the center of the circumsphere of its Delaunay polytope (taken in the affine hull of the Delaunay polytope) coincides with its barycenter. 
This condition does not hold for generic points, whose Delaunay polytope is $0$-dimensional and consists only of the nearest lattice point, see Figure~\ref{fig:DelaunayA2}.

 \begin{figure}[htb]
  \centering
    \begin{subfigure}[b]{0.4\textwidth}
        \centering
        \includegraphics[]{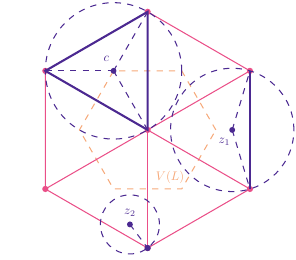}
        \caption*{The Delaunay polytope is full-dimensional only at the holes. In general, the Delaunay polytope is a face of a full-dimensional Delaunay polytope.}
    \end{subfigure}%
    \hspace{0.1\textwidth}
    \begin{subfigure}[b]{0.4\textwidth}
        \centering
        \includegraphics[]{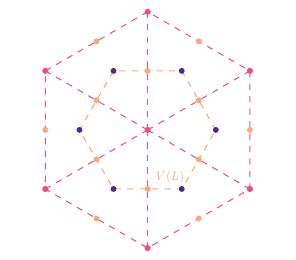}
        \caption*{Thus the only candidates for stable critical points are barycenters of the vertices of these faces, when they coincide with the center of their circumsphere.}
    \end{subfigure}
     \caption{The Delaunay decomposition of a lattice and the candidates for stable cold spots.}
     \label{fig:DelaunayA2}
 \end{figure}

This strong condition further implies that lattices possessing stable cold spots are also quite special. Since stable cold spots can only occur at deep holes, a necessary condition for a deep hole to be a stable cold spot is that it is the barycenter of the vertices of its Delaunay polytope. This condition is not satisfied for generic lattices, see Figure~\ref{fig:DelaunayGeneric}. 

 \begin{figure}[htb]
  \centering
    \begin{subfigure}[b]{0.4\textwidth}
        \centering
        \includegraphics[]{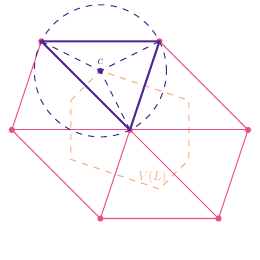}
        \caption*{For generic lattices, the deep holes are not the barycenter of their Delaunay polytopes.}
    \end{subfigure}%
    \hspace{0.1\textwidth}
    \begin{subfigure}[b]{0.4\textwidth}
        \centering
        \includegraphics[]{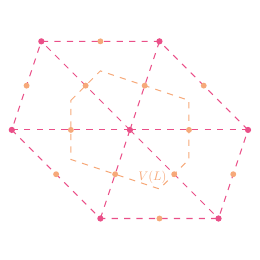}
        \caption*{Generically, the only candidates for stable critical points are lattice points and half-lattice points.}
    \end{subfigure}
     \caption{Deep holes are not stable cold spots for generic lattices.}
     \label{fig:DelaunayGeneric}
 \end{figure}

One reason why every inhomogeneous shell $L(z,r)$ could be balanced comes from central symmetry. We have $L(z,r) - z = -(L(z,r) - z)$ if and only if $2z$ is a lattice vector. This applies, for example, to lattice points and to midpoints of the facets of 
the Voronoi cell $V(L)$ because of a famous characterization of facet midpoints due to Voronoi \cite{Voronoi-1908}: a point $y$ is a facet midpoint of $V(L)$ if and only if $2y \in L$ and $\pm 2y$ are the unique shortest vectors in the coset $2y + 2L$.

\medskip

If $X$ is a spherical $2$-design, we have
\[
\sum_{x \in X}  (x-z) (x-z)^{\sf T} = \frac{r^2 |X|}{n} I_n,
\]
and therefore, when every inhomogeneous shell is a spherical $2$-design, the Hessian simplifies to
\begin{equation}
\label{eq:Hessian-2-design}
\nabla^2 p(f, L, z) = \sum_{r \geq 0} |L(z,r)| \left(2f'(r^2) + \frac{4r^2}{n} f''(r^2) \right)I_n.
\end{equation}

\subsection{Summary of the main results}

The main result of this paper, proved in Section~\ref{sec:proof-main-theorem}, is that in lattices
with ``highly symmetric" deep holes, the deep holes are equal to cold spots for $f_{\alpha}$ when $\alpha$ is large enough. 

\begin{theorem}
\label{thm:main}
Let $L \subseteq \R^n$ be an $n$-dimensional lattice and let $c \in \mathbb{R}^n$ be a deep hole of $L$. Suppose that, up to equivalence (by the affine isometry group of the lattice), $c$ is the only deep hole of $L$. Suppose further that every inhomogeneous shell around $c$ is a spherical $2$-design. Then $c$ is a stable cold spot. 
\end{theorem}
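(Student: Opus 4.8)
\section*{Proof proposal}

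The plan is to argue by contradiction and compactness, reducing the global statement over $\R^n$ to a local analysis at $c$ via the $L$-periodicity of $z\mapsto p(f_\alpha,L,z)$, and then to resolve the delicate behaviour near $c$ by a rescaling argument. Write $\mu=\mu(L)$ for the covering radius, so that the nonempty shell $L(c,r)$ of smallest radius has $r=\mu$; set $S_\mu=L(c,\mu)-c$ and $N_c=|S_\mu|$. Suppose, for contradiction, that $c$ is not a stable cold spot. Then there are $\alpha_k\to\infty$ and points $z_k$ with $p(f_{\alpha_k},L,z_k)<p(f_{\alpha_k},L,c)$. By periodicity I may take $z_k$ in the compact cell $\overline{V(L)}$ and, after passing to a subsequence, assume $z_k\to z_*$.

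First I would show that $z_*$ must be a deep hole. If not, then $d(z_*):=\min_{x\in L}\|x-z_*\|<\mu$, so $d(z_k)\le\mu-\delta$ for some fixed $\delta>0$ and all large $k$. Keeping only the nearest lattice point gives $p(f_{\alpha_k},L,z_k)\ge e^{-\alpha_k(\mu-\delta)^2}$, while the shifted theta series yields $p(f_{\alpha_k},L,c)\le C_\theta\,e^{-\alpha_k\mu^2}$ with $C_\theta\to N_c$; the ratio then tends to $+\infty$, contradicting $p(z_k)<p(c)$. Hence $z_*$ is a deep hole, and by the uniqueness hypothesis there is an affine isometry of $L$ carrying $z_*$ to $c$. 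Since $p(f_\alpha,L,\cdot)$ is invariant under this isometry, I apply it to the whole sequence and assume $z_*=c$; write $v_k=z_k-c\to0$.

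The decisive step is to control the scale of $\|v_k\|$ relative to $\alpha_k$, and I expect this to be the main obstacle, as it is exactly where a mere pointwise B\'etermin--Petrache comparison is insufficient: the ``well'' around $c$ narrows at rate $1/\alpha$. I distinguish three regimes for $\alpha_k\|v_k\|$ along subsequences. (a) If $\alpha_k\|v_k\|\to\infty$: because $S_\mu$ is balanced and spans $\R^n$, its support function $m(v)=\max_{w\in S_\mu}\langle w,v\rangle$ satisfies $m(v)\ge c_0\|v\|$ with $c_0>0$, so $d(z_k)^2\le\mu^2-c_0\|v_k\|$ for small $v_k$, giving $p(f_{\alpha_k},L,z_k)\ge e^{-\alpha_k\mu^2}e^{c_0\alpha_k\|v_k\|}$, which again beats $C_\theta e^{-\alpha_k\mu^2}$, a contradiction. (b) If $\alpha_k\|v_k\|\to0$: here the Hessian dominates. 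By Theorem~\ref{thm:stable-critical-points}, $c$ is critical for every $\alpha$, and since every shell is a $2$-design, \eqref{eq:Hessian-2-design} gives $\nabla^2 p(f_\alpha,L,c)=\lambda(\alpha)I_n$ with $\lambda(\alpha)\sim\tfrac{4\mu^2N_c}{n}\alpha^2 e^{-\alpha\mu^2}>0$ for large $\alpha$; a third-order Taylor expansion with remainder bounded by $C\alpha_k^3 e^{-\alpha_k\mu^2}\|v_k\|^3$ (valid because $\alpha_k\|v_k\|\to0$ keeps the nearest distance close to $\mu$) shows the quadratic term dominates, so $p(z_k)>p(c)$, a contradiction.

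The remaining regime (c), $\alpha_k\|v_k\|\to C_0\in(0,\infty)$, is the heart of the matter and is handled by rescaling $v_k=u_k/\alpha_k$ with $u_k\to u_*$, $\|u_*\|=C_0>0$. Multiplying by $e^{\alpha_k\mu^2}$ and passing to the limit term by term (the contribution of each shell $L(c,r)$ with $r>\mu$ vanishes, controlled by a uniform theta-tail estimate, while the $r=\mu$ shell survives because $2\alpha_k\langle w,v_k\rangle=2\langle w,u_k\rangle$ stays bounded and $\alpha_k\|v_k\|^2\to0$) yields
\[
e^{\alpha_k\mu^2}\,p(f_{\alpha_k},L,c+v_k)\ \longrightarrow\ \Phi(u_*):=\sum_{w\in S_\mu}e^{2\langle w,u_*\rangle},\qquad e^{\alpha_k\mu^2}\,p(f_{\alpha_k},L,c)\ \longrightarrow\ N_c=\Phi(0).
\]
Thus $p(z_k)<p(c)$ forces $\Phi(u_*)\le\Phi(0)$. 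But $\Phi$ is strictly convex --- its Hessian $4\sum_{w\in S_\mu}ww^{\tp}e^{2\langle w,u\rangle}$ is positive definite because the $2$-design $S_\mu$ spans $\R^n$ --- and $\nabla\Phi(0)=2\sum_{w\in S_\mu}w=0$ by the $1$-design (barycenter) property; hence $0$ is the unique global minimizer of $\Phi$, and $\Phi(u_*)>\Phi(0)$ for $u_*\ne0$, a contradiction. Since all three regimes are impossible, no such sequence exists, and $c$ is a stable cold spot. The technical crux is regime (c): recognizing that $v=u/\alpha$ is the correct scaling producing a nondegenerate limit, and that the limit's unique minimum at the barycenter is precisely the $1$-design property, here upgraded from an infinitesimal to a strict global statement by convexity.
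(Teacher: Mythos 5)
Your proof is correct, but it takes a genuinely different route from the paper. The paper's argument is quantitative: it first proves a uniform local result (Theorem~\ref{lem:locmin}) via the linear programming bound for spherical designs --- degree-$2$ feasible polynomials give, for every shell $L(c,r)$ and every concentric sphere $c+\rho y$, the explicit lower bound of Lemma~\ref{lem:LPsol}, and Lemma~\ref{lem:Ralpha} converts this into an explicit threshold $\alpha_c$ and an explicit radius $R_{\alpha_c}$ that is \emph{uniform} in $\alpha\geq\alpha_c$ --- and then combines it with a Banaszczyk-type estimate (Lemma~\ref{thm:ineqBP}, after B\'etermin--Petrache) valid on a ball $B(0,\varrho)$ with $\varrho<\mu(L)$, gluing the two regions by a covering of the Voronoi cell. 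You replace both ingredients by a compactness/contradiction argument: away from deep holes the crude ``nearest lattice point versus theta series'' comparison suffices (no Banaszczyk estimate is needed, since you only require the inequality along a sequence), and near $c$ you run a blow-up analysis in three regimes of $\alpha_k\|v_k\|$, the critical one being the rescaling $v=u/\alpha$, under which the limit functional $\Phi(u)=\sum_{w\in S_\mu}e^{2\langle w,u\rangle}$, with $S_\mu=L(c,\mu)-c$, is strictly convex (positive definiteness of $\sum_w ww^{\sf T}$, i.e.\ the $2$-design/spanning property of the first shell) with unique minimum at the barycenter (the $1$-design property), while the Hessian formula \eqref{eq:Hessian-2-design} for all shells handles the subcritical regime. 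What each approach buys: yours is more elementary --- no Jacobi polynomials, no Hermite interpolation, no Banaszczyk --- and it isolates cleanly where each design hypothesis enters; but it is purely qualitative and produces no explicit $\alpha_0$, whereas the paper's uniform $R_{\alpha_c}$ and explicit $\alpha_\varrho$ are precisely what is needed afterwards to compute concrete thresholds such as $\alpha_0=23$ for $E_8$ (Theorem~\ref{thm:e8}) and $\alpha_0=5$ for $D_4$ (Theorem~\ref{thm:d4}). Two technical points in your write-up deserve care but are routine to fill in: the third-order Taylor remainder bound $C\alpha_k^3e^{-\alpha_k\mu^2}\|v_k\|^3$ in regime (b), which requires bounding third derivatives uniformly along the segment from $c$ to $z_k$ (using $\alpha_k\|v_k\|\to 0$), and the uniform theta-tail estimate justifying the term-by-term passage to the limit in regime (c).
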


In our proof we combine two different bounds for inhomogeneous Gaussian lattice sums by a covering argument. 
The first bound is from B\'etermin, Petrache \cite{Betermin-Petrache-2017}, which is helpful to bound $p(f_{\alpha},L,z)$ for points $z$ far away from deep holes.
The second bound is new and comes from an application of linear programming bounds for spherical designs. With this bound one can show that deep holes are local minimizers for $z \mapsto p(f_{\alpha},L,z)$. Furthermore, it also yields very explicit, quantitative information about the inhomogeneous Gaussian lattice sum in the neighborhood of $c$, which is needed for the covering argument.

\smallskip

In Section~\ref{sec:inhomogeneous-shells} we show that our main theorem is applicable to all root lattices.

\begin{theorem} \label{thm:rootlattices}
The deep holes of root lattices are stable cold spots.
\end{theorem}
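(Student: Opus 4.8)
The plan is to verify, for each irreducible root lattice, the two hypotheses of Theorem~\ref{thm:main}: that the deep hole $c$ is unique up to the affine isometry group $\iso(L)$, and that every nonempty inhomogeneous shell $L(c,r)$ is a spherical $2$-design. A root lattice is an orthogonal sum of irreducible root lattices $A_n$, $D_n$, $E_6$, $E_7$, $E_8$, and since the argument below rests on the point group of the deep hole acting irreducibly on $\R^n$ (which forces the irreducible setting, the symmetries of a nontrivial orthogonal sum being reducible), I carry it out for the irreducible lattices, where the phenomenon genuinely lives; note $D_3\cong A_3$ is already covered by the $A_n$ family.

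For uniqueness I would invoke the classical classification of the Delaunay polytopes and deep holes of irreducible root lattices: each such lattice carries a single orbit of deep holes under $\iso(L)=L\rtimes\Aut(L)$. The only delicate case is $D_4$, where the two apparent shapes of deep holes, represented by $(1,0,0,0)$ and $(\tfrac12,\tfrac12,\tfrac12,\tfrac12)$, are interchanged by the triality automorphism, which lies in $\Aut(D_4)$; thus they form one orbit and it suffices to treat the representative $(\tfrac12,\dots,\tfrac12)$.

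The $2$-design property I would establish group-theoretically. Let $G_c\subseteq\ort(n)$ be the point group obtained from the stabilizer of $c$ in $\iso(L)$ after translating $c$ to the origin. Each recentered shell $L(c,r)-c$ is a $G_c$-invariant subset of the sphere of radius $r$, hence a union of $G_c$-orbits all of the same radius. If $G_c$ has no nonzero invariant vector and its only invariant symmetric bilinear form is a scalar multiple of the standard inner product, then $\sum_x (x-c)=0$ and $\sum_x (x-c)(x-c)^{\sf T}$ is a scalar matrix on every orbit, so every orbit is a spherical $2$-design; a union of same-radius $2$-designs is again a $2$-design, so every shell is a $2$-design. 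Both conditions hold once $G_c$ acts \emph{absolutely} irreducibly on $\R^n$, since then $\operatorname{End}_{G_c}(\R^n)=\R$ forces $(\R^n)^{G_c}=0$ and a one-dimensional space of invariant quadratic forms. It therefore suffices to exhibit an absolutely irreducible subgroup of $G_c$. For $A_n$, placed in the hyperplane $\sum x_i=0\subseteq\R^{n+1}$, the deep hole is the barycenter of a hypersimplex and is fixed by the coordinate permutations $S_{n+1}$, whose action on the hyperplane is the standard (absolutely irreducible) reflection representation. For $D_n$ the deep hole $(\tfrac12,\dots,\tfrac12)$ is fixed not only by coordinate permutations but also by every double sign change composed with the lattice translation $e_i+e_j\in D_n$; the resulting linear parts generate the Weyl group $W(D_n)$, whose reflection representation on $\R^n$ is absolutely irreducible for $n\ge 3$.

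The main obstacle is the exceptional lattices $E_6$, $E_7$, $E_8$, where neither the deep hole nor its stabilizer is as transparent. Here I would determine $G_c$ explicitly from the known deep holes and verify that it still acts absolutely irreducibly on $\R^6$, $\R^7$, $\R^8$. Since the reflection representation of every irreducible finite Coxeter group is absolutely irreducible, it is enough to show that $G_c$ contains a conjugate of the relevant Weyl group, or more modestly any irreducible reflection subgroup — equivalently, to rule out a proper $G_c$-invariant subspace. This is the one step I expect to require genuine case-by-case work, carried out either through the extended Dynkin diagram description of the deep holes or by a direct computation on explicit coordinates for the deep holes of $E_6$, $E_7$, and $E_8$.
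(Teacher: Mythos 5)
Your strategy coincides with the paper's: reduce to the hypotheses of Theorem~\ref{thm:main}, and get the $2$-design property from the stabilizer of the deep hole acting irreducibly (the paper phrases the criterion via Goethals--Seidel, ``no invariant harmonics of degree $\leq 2$''; your Schur-type argument is a sufficient form of the same thing), and your $A_n$, $D_n$ constructions are essentially the paper's Lemma~\ref{lem:special:points:rootlattice} done by hand. (Minor: for $A_n$ the deep hole is \emph{not} literally fixed by coordinate permutations --- its entries take two distinct values --- one must compose with lattice translations exactly as you did for $D_n$; only the linear parts matter.) However, there are two genuine gaps.

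First, the decomposable case. The theorem is about all root lattices, and you discard orthogonal sums on the grounds that their symmetries act reducibly. That justification is false when summands are isomorphic (for $M \perp M$ the swap can make the stabilizer irreducible), but more importantly the case cannot be discarded: Theorem~\ref{thm:main} genuinely fails for decomposable lattices. For $A_1 \perp A_2$ (roots of squared norm $2$), the first shell around the deep hole $c=(c_1,c_2)$ has recentered second moment block-diagonal with blocks proportional to $\mu(A_1)^2/1 = 1/2$ and $\mu(A_2)^2/2 = 1/3$, so it is not a $2$-design; equivalently, the harmonic polynomial $2x_1^2 - x_2^2 - x_3^2$ is invariant under the stabilizer. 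The paper needs a separate idea here, its Lemma~\ref{lem:decomposable}: for Gaussians, $p(f_\alpha, L_1 \perp \cdots \perp L_m, z) = \prod_i p(f_\alpha, L_i, z_i)$, so minimizers are exactly tuples of minimizers; since deep holes of an orthogonal sum are exactly sums of deep holes of the summands, the reduction to indecomposable lattices follows. This factorization is special to the Gaussian potential and must be stated; it is not a routine remark.

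Second, the exceptional lattices. For $E_6$, $E_7$, $E_8$ you defer the verification to unspecified ``case-by-case work,'' which is precisely where the paper's content lies. The paper's uniform mechanism for everything except $E_8$ (Lemma~\ref{lem:special:points:rootlattice}, Bourbaki's special points): deep holes of an indecomposable root lattice $L \not\cong E_8$ lie in the dual (weight) lattice $L^*$, and the stabilizer of such a point in the affine Weyl group is isomorphic to the full Weyl group $W(L)$; this settles $E_6$ and $E_7$ at once. For $E_8$ this mechanism breaks down --- $E_8^* = E_8$, so deep holes are not dual lattice points, and in fact the stabilizer does \emph{not} contain $W(E_8)$; the paper proves by an explicit argument (Lemma~\ref{lem:deephole:stabilizer}) that the stabilizer of a deep hole, whose Delaunay polytope is a cross-polytope, is isomorphic to $W(D_8)$, which still acts irreducibly on $\R^8$. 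So your fallback ``any irreducible reflection subgroup'' is the right target, but a reader following ``contains a conjugate of the relevant Weyl group'' literally would fail at $E_8$, and until these stabilizers are exhibited the proof is incomplete exactly in the cases that require real work.
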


For this we determine the stabilizer subgroup of the affine isometry group of the lattice at a deep hole and see that all inhomogeneous shells around deep holes carry spherical $2$-designs. In the situation of root lattices we can improve the generic covering argument used to prove Theorem~\ref{thm:main}. By this we get reasonable estimates for the threshold $\alpha_0$. For example, we shall show that for $L = E_8$ already $\alpha_0 = 23$ suffices (Theorem~\ref{thm:e8}) and for $L = D_4$ already $\alpha_0 = 5$ suffices (Theorem~\ref{thm:d4}).

\smallskip

Using the same strategy we shall see in Section~\ref{sec:other-lattices} that the dual of the exceptional root lattices $E_6^*$, $E_7^*$, as well as the Coxeter-Todd lattice in dimension $12$ and the Barnes-Wall lattice in dimension $16$ can be treated by Theorem~\ref{thm:main}. In all these cases the deep holes are stable cold spots.

\smallskip

However, the behaviour of the Leech lattice is somewhat unexpected.

\begin{theorem}
\label{thm:leech-lattice}
The Leech lattice does not have stable cold spots.
\end{theorem}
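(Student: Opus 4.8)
The plan is to establish that the Leech lattice fails the necessary condition identified by Theorem~\ref{thm:stable-critical-points}: a stable cold spot must be a \emph{stable critical point}, hence the barycenter of the vertices of its Delaunay polytope, and moreover, since stable cold spots can only occur at deep holes (by the limiting result of B\'etermin and Petrache), it suffices to show that \emph{no} deep hole of the Leech lattice is a stable critical point. By Theorem~\ref{thm:stable-critical-points}, a point $z$ is a stable critical point if and only if every non-empty inhomogeneous shell $L(z,r)$ forms a spherical $1$-design, i.e.\ is balanced around $z$. Thus I would aim to exhibit, for each deep hole $c$ of the Leech lattice, at least one inhomogeneous shell $\Lambda(c,r)$ whose vectors do not sum to zero (after centering at $c$).

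First I would recall the classification of deep holes of the Leech lattice $\Lambda$: by the celebrated work of Conway, Parker, and Sloane, the deep holes of the Leech lattice have covering radius $\sqrt{2}$, and there are exactly $23$ inequivalent deep holes, in bijection with the $23$ Niemeier lattices (the even unimodular lattices in dimension $24$ with roots). Each deep hole corresponds to a ``hole diagram'' given by an extended Dynkin diagram, and the vertices of the Delaunay polytope at a deep hole are precisely the minimal vectors realizing the covering radius. Since these deep holes are genuinely of different shapes, and in particular are generally \emph{not} centrally symmetric (the Delaunay polytopes are not point-symmetric about $c$ except in special cases), the barycenter condition is expected to fail. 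The concrete step would be to pick a representative deep hole---or, to be safe, to argue uniformly---and compute the barycenter of the first shell, showing it is not $c$ itself.

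The cleanest uniform argument I would pursue is as follows. For a genuine stable critical point we need \emph{every} shell balanced; it is enough to defeat the innermost shell, namely the set of nearest lattice vectors whose convex hull is the Delaunay polytope at $c$. The barycenter of the vertices of the Delaunay polytope equals $c$ if and only if that first shell is a $1$-design. For the Leech lattice's deep holes, the Delaunay polytopes are explicitly described (for instance the one associated with the $A_{24}$ or $D_{24}$ diagram), and their vertex sets are not balanced about the circumcenter---one verifies directly that $\sum_{x\in\Lambda(c,r_0)}(x-c)\neq 0$ for the minimal radius $r_0=\sqrt 2$. Because there are only $23$ deep-hole types up to the affine isometry group, this is a finite check: for each of the $23$ types one confirms the first shell is not a $1$-design. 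Should any deep hole happen to have a balanced first shell, I would then examine the next shell $\Lambda(c,r_1)$; but the expectation, driven by the lack of central symmetry of the hole diagrams, is that the first shell already suffices.

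The main obstacle will be making the finite verification rigorous and genuinely uniform over all $23$ deep-hole classes, rather than checking one convenient case. Concretely, one must translate each hole diagram into explicit coordinates for the vertices of its Delaunay polytope and evaluate the first moment $\sum_{x}(x-c)$; the bookkeeping with the Leech lattice's coordinate system (via the binary Golay code and the $\rm MOG$) is where the real work lies. An alternative, more conceptual route that would sidestep much computation is to invoke central symmetry: by the remark in Section~\ref{sec:Introduction:highlysymmetric}, a shell $\Lambda(z,r)$ is automatically balanced when $2z\in\Lambda$; for a deep hole $c$ we have $2c\notin\Lambda$ (deep holes are not half-lattice points), so this easy source of balancedness is unavailable, and one must show that no accidental balancing occurs either. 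Demonstrating the \emph{absence} of such accidental $1$-design structure across all $23$ types---equivalently, that not all shells at any deep hole are $1$-designs---is precisely the delicate part of the argument.
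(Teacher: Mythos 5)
Your reduction — a stable cold spot must be a deep hole (by Corollary~\ref{cor:Betermin-Petrache}) and must be a stable critical point, hence by Theorem~\ref{thm:stable-critical-points} all of its inhomogeneous shells must be spherical $1$-designs — matches the paper's starting point. But your plan to finish by showing that \emph{every} one of the $23$ deep-hole types has some unbalanced shell cannot work, because that statement is false. The deep holes of type $A_1^{24}$ are half-lattice points: they are the points $v/2$, where $v$ runs over a norm-$8$ class of $\Lambda/2\Lambda$, and the $48$ nearest lattice points $(v\pm v_i)/2$ form $24$ antipodal pairs about the hole. Since $2c\in\Lambda$, the map $x\mapsto 2c-x$ is an affine isometry of $\Lambda$ fixing $c$, so \emph{every} shell $\Lambda(c,r)$ is centrally symmetric and hence a $1$-design; this deep hole therefore \emph{is} a stable critical point, and your ``examine the next shell'' fallback would never terminate. (Your side remark that deep holes are never half-lattice points is exactly what fails here.) Your expectation about first shells is also off: by Table~\ref{table:Lambda24-deep-holes}, eight of the $23$ types have first shell of strength at least $1$ — including the type $A_{24}$, whose Delaunay simplex \emph{is} balanced about its circumcenter — so where the $1$-design property fails, it often fails only in deeper shells.

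The missing idea is a way to single out \emph{which} deep hole a stable cold spot would have to be, so that the $1$-design criterion need only be defeated there. This is what the paper's Lemma~\ref{lem:estimate} provides: the asymptotics $\sum_{x\in L,\,\|x-z\|\geq r_0} e^{-\alpha\|x-z\|^2}=\Theta\bigl(e^{-\alpha r_0^2}\alpha^{-(n-1)/2}\bigr)$ induce a lexicographic order on the shell data $(r_1,a_1,r_2,a_2,\dots)$ that decides, for large $\alpha$, which of two points has the smaller potential. Comparing the $23$ types in Table~\ref{table:Lambda24-deep-holes} shows that for large $\alpha$ the potential is strictly smallest at the deep holes of type $A_{24}$; hence a stable cold spot, being a global minimizer for all large $\alpha$, would have to be of this type. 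Only then is the $1$-design test applied: the \emph{second} shell $\Lambda_{24}(c,\sqrt{52/25})$ around the $A_{24}$ hole is not a $1$-design, which gives the contradiction. In short, being a stable critical point is only necessary, not sufficient, for being a stable cold spot, and in the Leech lattice the two notions genuinely differ; without the asymptotic comparison step your argument cannot rule out the centrally symmetric deep holes such as $A_1^{24}$.
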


To prove this, we consider the $23$ inequivalent deep holes of the Leech lattice in Section~\ref{sec:leech-lattice}, determine the one to which cold spots converge when $\alpha$ tends to infinity, and then apply Theorem~\ref{thm:stable-critical-points}.

\subsection{Open questions}

It would be interesting to prove or disprove in all the treated cases, whether the stable cold spots are universal cold spots. Currently, we do not know how to attack this question. It is clear that the techniques developed in this paper will not suffice, and new ideas are needed. Also we cannot resolve the cases $A_n^*$, for $n \geq 3$, and $D_n^*$, for $n \geq 5$. In these cases inhomogenous shells around deep holes are all spherical $1$-designs but not spherical $2$-designs.

\section{Stable cold spots and spherical $2$-designs: Proof of Theorem~\ref{thm:main}}
\label{sec:proof-main-theorem}

Before going into details, we first set up notations and sketch the strategy of our proof. 
Let $L$ be an $n$-dimensional lattice. 
Up to translation by lattice vectors, it is enough to consider the restriction of the function $z \mapsto p(f_\alpha, L, z)$ to the Voronoi cell $V(L)$.
Consequentely, in the rest of this section, unless explicitely stated, we assume that $z$ and $c$ belong to $V(L)$, and that $c$ is a deep hole of $L$. The assumptions of Theorem~\ref{thm:main} ensure that all the deep holes have the same potential. 

Our proof then goes in two steps. 
First, Theorem~\ref{lem:locmin} shows that if every shell around $c$ is a spherical $2$-design, then $c$ is a local minimizer of $z \mapsto p(f_\alpha, L, z)$, and moreover we find explicit $\alpha_c$ and $R_{\alpha_c}$ such that $c$ is the unique minimizer in the ball $B(c,R_{\alpha_c})$, for every $\alpha > \alpha_c$.
This will be proven in Section~\ref{sec:LP}, and is our main contribution in this section.
Then, with Theorem~\ref{lem:locmin}, it is enough to study the potential in a ball centered at $0$ and of radius $\varrho < \mu(L)$, the covering radius of $L$.
This is the purpose of Theorem~\ref{cor:rho}, and is a consequence of previous work by B\'etermin and Petrache, that we discuss in Section~\ref{sec:BP}.

\subsection{Bounding inhomogeneous Gaussian lattice sums: Far away from deep holes}\label{sec:BP}

%For the proof of our main theorem 
We revisit some of the arguments originally due to B\'etermin and Petrache \cite[Theorem 1.5]{Betermin-Petrache-2017}, in order to obtain the following result that meets our needs:

\begin{theorem}
\label{cor:rho}
  Let $c\in V(L)$ be a deep hole of $L$, and let $\varrho$ such that $\varrho < \mu(L)$. 
  Then there exists $\alpha_\varrho$ such that for any $\alpha > \alpha_\varrho$, for every $z$ in the ball $B(0, \varrho)$, 
  \[
  p(f_\alpha, L, z) > p(f_\alpha, L, c). 
  \]
  \end{theorem}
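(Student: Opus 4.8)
The plan is to exploit the fact that, as $\alpha \to \infty$, the Gaussian sum $p(f_\alpha, L, z)$ is dominated by the single term coming from the lattice point nearest to $z$. The ball $B(0,\varrho)$ is centered at the lattice point $0$ and has radius $\varrho < \mu(L)$, so every $z \in B(0,\varrho)$ has a lattice point within distance $\varrho$; by contrast, the deep hole $c$ has all lattice points at distance at least $\mu(L) > \varrho$. I would therefore produce a lower bound for $p(f_\alpha, L, z)$ and an upper bound for $p(f_\alpha, L, c)$ whose leading Gaussian rates $\varrho^2 < \mu(L)^2$ force them to separate once $\alpha$ is large.

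For the lower bound I would simply retain the nearest-neighbor term: for $z \in B(0,\varrho)$ we have $\min_{x \in L}\|x-z\| \le \|z\| \le \varrho$, whence
\[
p(f_\alpha, L, z) \;\ge\; e^{-\alpha\, \min_{x \in L}\|x-z\|^2} \;\ge\; e^{-\alpha \varrho^2}.
\]
This bound is already uniform over the ball, the worst case being $\|z\| = \varrho$, so no extra work is needed to obtain uniformity in $z$.

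For the upper bound I must control the \emph{entire} lattice sum at $c$, and this is the only place where some care is required, since infinitely many lattice points contribute. Because $c$ is a deep hole, $\|x-c\| \ge \mu(L)$ for every $x \in L$. Fixing some $\alpha_0 > 0$ and writing, for $\alpha \ge \alpha_0$,
\[
e^{-\alpha\|x-c\|^2} = e^{-\alpha_0\|x-c\|^2}\, e^{-(\alpha-\alpha_0)\|x-c\|^2} \le e^{-\alpha_0\|x-c\|^2}\, e^{-(\alpha-\alpha_0)\mu(L)^2},
\]
and summing over $x \in L$, I obtain
\[
p(f_\alpha, L, c) \le p(f_{\alpha_0}, L, c)\, e^{\alpha_0 \mu(L)^2}\, e^{-\alpha \mu(L)^2} =: C\, e^{-\alpha\mu(L)^2},
\]
where $C$ is a finite constant because the lattice theta series $p(f_{\alpha_0}, L, c)$ converges for every positive $\alpha_0$. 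One could alternatively invoke directly the comparison estimates of B\'etermin and Petrache, but this factoring keeps the tail bound self-contained.

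Finally I would combine the two estimates. Since $\mu(L)^2 > \varrho^2$, the inequality $C e^{-\alpha\mu(L)^2} < e^{-\alpha\varrho^2}$ is equivalent to $\alpha > \ln C / (\mu(L)^2 - \varrho^2)$. Taking
\[
\alpha_\varrho = \max\!\left(\alpha_0,\; \frac{\ln C}{\mu(L)^2 - \varrho^2}\right)
\]
then gives, for every $\alpha > \alpha_\varrho$ and every $z \in B(0,\varrho)$,
\[
p(f_\alpha, L, c) \le C e^{-\alpha\mu(L)^2} < e^{-\alpha\varrho^2} \le p(f_\alpha, L, z),
\]
which is the desired strict inequality. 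The genuine technical point is the tail estimate for $p(f_\alpha, L, c)$; everything else reduces to comparing the two exponential rates $\varrho^2 < \mu(L)^2$.
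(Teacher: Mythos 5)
Your proof is correct, and it takes a genuinely different and more elementary route than the paper. The paper deduces the theorem from a ratio inequality of B\'etermin--Petrache type, bounding the quotient $p(f_\alpha,L,c)/p(f_\alpha,L,z)$ by $e^{-\alpha(\mu(L)^2-\|z\|^2)}\bigl(2\alpha\mu(L)^2e/n\bigr)^{n/2}$; its proof rests on Banaszczyk's Gaussian estimates, namely a tail bound for $\sum_{x\in L,\ \|x-c\|>r}e^{-\pi\|x-c\|^2}$ after rescaling the lattice, followed by the re-centering inequality $\sum_{x\in L}e^{-\alpha\|x\|^2}\le e^{\alpha\|z\|^2}\sum_{x\in L}e^{-\alpha\|x-z\|^2}$. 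You instead bound the two sides separately: a one-term lower bound $p(f_\alpha,L,z)\ge e^{-\alpha\varrho^2}$ (using $0\in L$), and an upper bound $p(f_\alpha,L,c)\le C\,e^{-\alpha\mu(L)^2}$ via the factoring $e^{-\alpha\|x-c\|^2}\le e^{-\alpha_0\|x-c\|^2}e^{-(\alpha-\alpha_0)\mu(L)^2}$, which is valid because every lattice point lies at distance at least $\mu(L)$ from the deep hole; the constant $C=p(f_{\alpha_0},L,c)\,e^{\alpha_0\mu(L)^2}$ is finite by convergence of the theta series, and the separation of the exponential rates $\varrho^2<\mu(L)^2$ finishes the argument. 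Both steps are airtight, and your argument is self-contained, needing neither Banaszczyk's lemma nor any rescaling; your bound even avoids the polynomial factor $\alpha^{n/2}$. What the paper's heavier machinery buys is a fully closed-form constant: its threshold condition $e^{-\alpha(\mu(L)^2-\varrho^2)}\bigl(2\alpha\mu(L)^2e/n\bigr)^{n/2}<1$ depends only on $n$, $\mu(L)$, and $\varrho$, and this explicitness is exactly what is exploited later in the paper to certify concrete thresholds such as $\alpha_0=23$ for $E_8$ and $\alpha_0=5$ for $D_4$. Your threshold $\alpha_\varrho=\max\bigl(\alpha_0,\ \ln C/(\mu(L)^2-\varrho^2)\bigr)$ involves the lattice-specific quantity $p(f_{\alpha_0},L,c)$, which is not in closed form and would require an additional numerical or Banaszczyk-style estimate before it could serve the same quantitative purpose.
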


\smallskip

This result is an immediate consequence of the following inequality:

\begin{lemma}\label{thm:ineqBP}
Let $L$ be an $n$-dimensional lattice, let $c$ be a deep hole of $L$, and let $z \in \mathbb{R}^n$ be an arbitary point. If $\alpha > n / (2 \mu(L)^2)$, then
\begin{equation}
\label{eq:Betermin-Petrache}
\left(\sum_{x \in L} e^{-\alpha \|x-c\|^2} \right) \bigg/
\left(\sum_{x \in L} e^{-\alpha \|x-z\|^2} \right)
\leq
e^{-\alpha(\mu(L)^2 - \|z\|^2)} \left(\frac{2\alpha \mu(L)^2 e}{n} \right)^{n/2}.
\end{equation}
\end{lemma}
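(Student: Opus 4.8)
The plan is to estimate the numerator and the denominator of the left-hand side of \eqref{eq:Betermin-Petrache} separately, by quantities that differ exactly by the claimed factor, so that the covolume $\det L$ and the Gaussian normalization $(\pi/\alpha)^{n/2}$ cancel in the ratio. Throughout I write $\Theta_\alpha(w) := p(f_\alpha,L,w) = \sum_{x \in L} e^{-\alpha\|x-w\|^2}$ and $\mu = \mu(L)$, and I use that $0 \in L$ while $c$ is a deep hole, so every lattice point satisfies $\|x-c\| \geq \mu$.

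For the denominator I would establish the clean lower bound $\Theta_\alpha(z) \geq \det(L)^{-1}(\pi/\alpha)^{n/2} e^{-\alpha\|z\|^2}$, valid for \emph{every} $z \in \R^n$. The key observation is that
\[
e^{\alpha\|w\|^2}\Theta_\alpha(w) = \sum_{x \in L} e^{-\alpha\|x\|^2} e^{2\alpha\langle x, w\rangle}
\]
is a \emph{convex} function of $w$, being a positive combination of exponentials of linear forms, and that its gradient at $w=0$ equals $2\alpha\sum_{x \in L} e^{-\alpha\|x\|^2} x = 0$ by the central symmetry $x \leftrightarrow -x$ of $L$. Hence $w=0$ is its global minimum, which gives $e^{\alpha\|z\|^2}\Theta_\alpha(z) \geq \Theta_\alpha(0)$, that is $\Theta_\alpha(z) \geq e^{-\alpha\|z\|^2}\Theta_\alpha(0)$. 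Finally, Poisson summation yields $\Theta_\alpha(0) = \det(L)^{-1}(\pi/\alpha)^{n/2}\sum_{\xi \in L^*} e^{-\pi^2\|\xi\|^2/\alpha} \geq \det(L)^{-1}(\pi/\alpha)^{n/2}$, since every summand is positive and the term $\xi = 0$ already contributes this much.

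For the numerator the target is the matching upper bound $\Theta_\alpha(c) \leq \det(L)^{-1}(\pi/\alpha)^{n/2} e^{-\alpha\mu^2}(2\alpha\mu^2 e/n)^{n/2}$. The emptiness of the ball $B(c,\mu)$ extracts the exponential decay: splitting $\alpha = \alpha_1 + \alpha_2$ and using $\|x-c\|^2 \geq \mu^2$ in the $\alpha_2$-part gives $\Theta_\alpha(c) \leq e^{-\alpha_2\mu^2}\sum_{x \in L} e^{-\alpha_1\|x-c\|^2}$. It then remains to bound the residual rate-$\alpha_1$ sum by a covolume-normalized Gaussian integral, that is to show that the deep hole is below average, $\sum_{x} e^{-\alpha_1\|x-c\|^2} \leq \det(L)^{-1}(\pi/\alpha_1)^{n/2}$; taking $\alpha_1 = n/(2\mu^2)$, which is admissible precisely because the hypothesis guarantees $\alpha_1 < \alpha$, and recombining the exponentials reproduces the stated constant, since $(\pi/\alpha_1)^{n/2} e^{-\alpha_2\mu^2} = (\pi/\alpha)^{n/2}(2\alpha\mu^2 e/n)^{n/2} e^{-\alpha\mu^2}$. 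Dividing the numerator bound by the denominator bound then cancels $\det L$ and $(\pi/\alpha)^{n/2}$ and leaves exactly the right-hand side of \eqref{eq:Betermin-Petrache}.

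The main obstacle is this last sum-to-integral comparison with the \emph{sharp} constant. Pulling out $e^{-\alpha\mu^2}$ is elementary, but controlling the surviving polynomial factor is delicate: a Jensen/second-moment comparison over the translated Voronoi cells $x + V(L)$ yields only the weaker constant $(2\alpha(\mu^2 + \sigma^2)e/n)^{n/2}$, where $\sigma^2 = \det(L)^{-1}\int_{V(L)}\|y\|^2\,dy$ is the normalized second moment, and it is the removal of this $\sigma^2$ term, together with the verification that $\alpha > n/(2\mu^2)$ is exactly the threshold at which the optimal comparison rate $\alpha_1 = n/(2\mu^2)$ becomes admissible, that requires the careful analysis of B\'etermin and Petrache. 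This is the step I expect to demand the most work.
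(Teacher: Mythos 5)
Your denominator bound is fine: the convexity-plus-Poisson argument correctly yields, in your notation, $\Theta_\alpha(z) \geq e^{-\alpha\|z\|^2}\Theta_\alpha(0) \geq e^{-\alpha\|z\|^2}\det(L)^{-1}(\pi/\alpha)^{n/2}$, and the first of these inequalities is exactly part (b) of Lemma~\ref{lem:Bana}, which the paper also uses. The rate-splitting $\alpha = \alpha_1 + \alpha_2$ with $\alpha_1 = n/(2\mu(L)^2)$, the use of the deep-hole hypothesis only through $\|x-c\| \geq \mu(L)$, and your bookkeeping of constants are also correct. The genuine gap is the step you defer to ``the careful analysis of B\'etermin and Petrache'': the claim that a deep hole is below average, $\sum_{x \in L} e^{-\alpha_1\|x-c\|^2} \leq \det(L)^{-1}(\pi/\alpha_1)^{n/2}$. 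This is not proven in your argument, and it is also not what B\'etermin--Petrache (or Banaszczyk, whose estimates they rely on) establish: their tail bound compares the Gaussian sum centered at $c$ with the sum centered at a \emph{lattice point}, never with the average. By Poisson summation your claim is equivalent to the sign condition $\sum_{\xi \in L^* \setminus \{0\}} e^{-\pi^2\|\xi\|^2/\alpha_1}\cos(2\pi\,\xi\cdot c) \leq 0$, and nothing in the deep-hole property obviously controls these signs; whether deep holes have small Gaussian sums is precisely the kind of question this paper shows to be delicate (deep holes of generic lattices, and even of the Leech lattice, are not stable cold spots). So as written the numerator bound is unsupported.

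The claim is, however, also unnecessary: your proof closes with two elementary Poisson-summation facts valid for \emph{every} lattice and \emph{every} shift. First, $\Theta_{\alpha_1}(c) \leq \Theta_{\alpha_1}(0)$, since on the dual side the shift only multiplies the positive Gaussian coefficients by factors $\cos(2\pi\,\xi\cdot c) \leq 1$. Second, $\Theta_{\alpha_1}(0) \leq (\alpha/\alpha_1)^{n/2}\,\Theta_\alpha(0)$, by comparing the two dual sums termwise using $\alpha_1 < \alpha$. Chaining these with your splitting and your denominator bound gives
\[
\frac{\Theta_\alpha(c)}{\Theta_\alpha(z)}
\leq
\frac{e^{-\alpha_2\mu(L)^2}\,(\alpha/\alpha_1)^{n/2}\,\Theta_\alpha(0)}{e^{-\alpha\|z\|^2}\,\Theta_\alpha(0)}
=
e^{-\alpha(\mu(L)^2 - \|z\|^2)} \left(\frac{2\alpha\mu(L)^2 e}{n}\right)^{n/2},
\]
where $\Theta_\alpha(0)$ cancels, so the Poisson lower bound $\Theta_\alpha(0) \geq \det(L)^{-1}(\pi/\alpha)^{n/2}$ is not needed either. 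This repaired chain is, in substance, exactly the proof of part (a) of Lemma~\ref{lem:Bana} combined with part (b); in other words, once corrected, your approach becomes the paper's proof with Banaszczyk's lemma unpacked rather than cited.
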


Indeed, for every $z$ in $B(0, \varrho)$,
\[
\mu(L)^2 - \|z\|^2 > \mu(L)^2 - \varrho^2 > 0,
\]
 and 
  \[
    e^{-\alpha(\mu(L)^2 - \varrho^2)} \left(\frac{2\alpha\mu(L)^2 e}{n} \right)^{n/2}
  \]
  goes to $0$ when $\alpha$ tends to infinity.  
Note that this proof gives an explicit estimate on $\alpha_\varrho$ depending on $\varrho$. 

It remains to prove Lemma~\ref{thm:ineqBP}.
For this, we need estimates by Banaszczyk \cite{Banaszczyk-1993}, which in the following variant can be found in \cite[Lemma 18.2 and Problem 18.4.1]{Barvinok}.

\begin{lemma}\label{lem:Bana}
Let $L$ be an $n$-dimensional lattice. Then:
\begin{enumerate}
\item[(a)] For every $r > \sqrt{\frac{n}{2\pi}}$ and every $z\in \R^n$,
\[ 
\sum_{x \in L, \| x - z \| > r } e^{-\pi \| x - z \|^2} \leq e^{-\pi r^2}\left(\frac{2\pi e r^2}{n}\right)^{n / 2} 
\sum_{x \in L} e^{-\pi \|x\|^2}.
\]
\item[(b)] For every $\alpha>0$ and every $z\in \R^n$,
\[
\sum_{x \in L} e^{-\alpha \|x\|^2} \leq e^{\alpha \|z\|^2} 
\sum_{x \in L} e^{-\alpha \|x-z\|^2}.
  \]
\end{enumerate}
\end{lemma}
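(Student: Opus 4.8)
The plan is to prove the two estimates by different means: part~(b) follows from an elementary symmetry pairing, while part~(a) rests on Poisson summation together with a Chernoff-type splitting and a one-parameter optimization. Neither part uses the other.

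I would begin with part~(b). Expanding the exponent gives $\|z\|^2 - \|x-z\|^2 = -\|x\|^2 + 2\langle x, z\rangle$, so that $e^{\alpha\|z\|^2}\sum_{x\in L}e^{-\alpha\|x-z\|^2} = \sum_{x\in L}e^{-\alpha\|x\|^2}e^{2\alpha\langle x,z\rangle}$. Since $L=-L$, I would pair each $x$ with $-x$: the pair $\{x,-x\}$ with $x\neq 0$ contributes $2e^{-\alpha\|x\|^2}\cosh(2\alpha\langle x, z\rangle)\ge 2e^{-\alpha\|x\|^2}$ because $\cosh\ge 1$, while the $x=0$ term equals $1$ on both sides. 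Grouping $\sum_{x\in L}e^{-\alpha\|x\|^2}$ into the same pairs, each pair contributes exactly $2e^{-\alpha\|x\|^2}$, so the pairwise comparison yields $\sum_{x\in L}e^{-\alpha\|x\|^2}\le e^{\alpha\|z\|^2}\sum_{x\in L}e^{-\alpha\|x-z\|^2}$, which is~(b).

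For part~(a) I would write the tail sum over $y=x-z\in L-z$. The first step is a Chernoff-type splitting: for any parameter $\beta\in(0,1)$ and any $y$ with $\|y\|>r$ one has $e^{-\pi\|y\|^2}\le e^{-\pi(1-\beta)r^2}\,e^{-\pi\beta\|y\|^2}$ (since $1-\beta>0$ and $\|y\|^2>r^2$), so that after dropping the restriction $\|y\|>r$ the tail is at most $e^{-\pi(1-\beta)r^2}\sum_{x\in L}e^{-\pi\beta\|x-z\|^2}$. The second step bounds this shifted Gaussian theta sum by Poisson summation applied to $\sum_{x\in L}e^{-\pi\beta\|x-z\|^2}$, which produces $\frac{\beta^{-n/2}}{\det L}\sum_{w\in L^{*}}e^{-\pi\|w\|^2/\beta}\cos(2\pi\langle z,w\rangle)$, where $L^{*}$ is the dual lattice. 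Bounding $\cos\le 1$ (legitimate as all dual coefficients are positive) and, since $\beta<1$, using $e^{-\pi\|w\|^2/\beta}\le e^{-\pi\|w\|^2}$ termwise, I recognize the resulting dual sum as $\det L\cdot\sum_{x\in L}e^{-\pi\|x\|^2}$ through Poisson summation at $\beta=1$. This gives $\sum_{x\in L}e^{-\pi\beta\|x-z\|^2}\le\beta^{-n/2}\sum_{x\in L}e^{-\pi\|x\|^2}$, and combining the two steps yields the tail bound $e^{-\pi(1-\beta)r^2}\beta^{-n/2}\sum_{x\in L}e^{-\pi\|x\|^2}$ valid for every $\beta\in(0,1)$.

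It then remains to optimize the prefactor: extracting $e^{-\pi r^2}$ leaves $e^{\pi\beta r^2}\beta^{-n/2}$, and minimizing $\pi\beta r^2-\tfrac{n}{2}\log\beta$ gives the critical value $\beta=n/(2\pi r^2)$. This value lies in $(0,1)$ \emph{precisely} under the hypothesis $r>\sqrt{n/(2\pi)}$, which is exactly why the threshold appears; substituting it produces $e^{n/2}(2\pi r^2/n)^{n/2}=(2\pi e r^2/n)^{n/2}$ and hence inequality~(a). The main technical point to get right is the Poisson-summation step: one must justify termwise manipulation of the absolutely convergent dual sum, discard the $z$-dependence via $\cos\le 1$, and invoke the monotonicity $e^{-\pi\|w\|^2/\beta}\le e^{-\pi\|w\|^2}$. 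Everything else is the elementary optimization above, whose feasibility constraint $\beta\in(0,1)$ is what forces the assumption $r>\sqrt{n/(2\pi)}$.
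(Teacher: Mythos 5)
Your proof is correct. Note that the paper itself gives no proof of this lemma: it is quoted from the literature (Banaszczyk's estimates, in the variant of Lemma~18.2 and Problem~18.4.1 of Barvinok's lecture notes), so there is no internal argument to compare against. Your two arguments are exactly the standard ones behind those references: for (b), the expansion $\|z\|^2-\|x-z\|^2=-\|x\|^2+2\langle x,z\rangle$ followed by the $x\leftrightarrow -x$ pairing and $\cosh\ge 1$ is the usual symmetry proof; for (a), the Chernoff splitting $e^{-\pi\|y\|^2}\le e^{-\pi(1-\beta)r^2}e^{-\pi\beta\|y\|^2}$, the Poisson-summation bound $\sum_{x\in L}e^{-\pi\beta\|x-z\|^2}\le \beta^{-n/2}\sum_{x\in L}e^{-\pi\|x\|^2}$ (valid since the dual coefficients are positive, $\cos\le 1$, and $e^{-\pi\|w\|^2/\beta}\le e^{-\pi\|w\|^2}$ for $\beta<1$), and the optimization $\beta=n/(2\pi r^2)$ — whose feasibility in $(0,1)$ is precisely the hypothesis $r>\sqrt{n/(2\pi)}$ — reproduce the constant $\left(2\pi e r^2/n\right)^{n/2}$ exactly.
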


\begin{proof}[Proof of Lemma~\ref{thm:ineqBP}]
We can write
\[
\sum_{x \in L} e^{-\alpha\| x - c\|^2} = \sum_{x \in \sqrt{\alpha/\pi} L} e^{-\pi \| x - \sqrt{\alpha/\pi} c\|^2}. 
\]
Now $\sqrt{\alpha/\pi} c$ is a deep hole of the scaled lattice $\sqrt{\alpha/\pi} L$. For a given radius $r$, we can split the last sum into two parts,
\[
\sum_{x \in \sqrt{\alpha/\pi } L, \| x - \sqrt{\alpha/\pi} c\| \leq r } e^{-\pi \| x - \sqrt{\alpha/\pi} c\|^2} + \sum_{x \in \sqrt{\alpha/\pi} L, \| x - \sqrt{\alpha/\pi} c\| > r } e^{-\pi \| x - \sqrt{\alpha/\pi} c\|^2}. 
\]
Observe that the first sum vanishes whenever $r$ is strictly smaller than $\sqrt{\alpha/\pi} \mu(L)$, the covering radius of $\sqrt{\alpha/\pi} L$. If $\alpha$ is large enough, we can choose $r$ in the range
$\sqrt{\frac{n}{2\pi}} < r < \sqrt{\alpha/\pi} \mu(L)$ so that we can apply (a) of Lemma~\ref{lem:Bana}. This gives
\[
\begin{split}
  \sum_{x \in L} e^{-\alpha \|x-c\|^2} 
  & = \sum_{x \in \sqrt{\alpha/\pi} L, \| x - \sqrt{\alpha/\pi} c\| > r } e^{-\pi \| x - \sqrt{\alpha/\pi} c\|^2}\\
  & \leq e^{-\pi r^2} 
  \left(\frac{2 \pi e r^2}{n} \right)^{n/2}
  \sum_{x \in \sqrt{\alpha/\pi} L} e^{-\pi\|x\|^2}.
\end{split}
\]
Now we apply (b) of Lemma~\ref{lem:Bana},
\[
\sum_{x \in L} e^{-\alpha \|x-c\|^2}
\leq e^{-\pi r^2} 
\left(\frac{2 \pi e r^2}{n} \right)^{n/2}
e^{-\alpha\|z\|^2} \sum_{x \in L} e^{-\alpha\|x-z\|^2}.
\]
By letting $r$ tending to $\sqrt{\alpha/\pi} \mu(L)$ we get the desired inequality \eqref{eq:Betermin-Petrache}.
\end{proof}

Based on Lemma~\ref{lem:Bana}, B\'etermin and Petrache obtain the following statement \cite[Theorem 1.5]{Betermin-Petrache-2017}, which is an immediate consequence of Theorem~\ref{cor:rho}.

\begin{corollary}
\label{cor:Betermin-Petrache}
  Let $L$ be an $n$-dimensional lattice and let $c$ be a deep hole of
  $L$. Then for any $z\in \R^n$, which is not a deep
  hole\footnote{This assumption is missing in the original statement \cite[Theorem 1.5]{Betermin-Petrache-2017}}, there exists $\alpha_z$ such that for any $\alpha > \alpha_z$,
\[
p(f_\alpha, L, z) > p(f_\alpha, L, c). 
\]
Thus, for $\alpha \to \infty$ cold spots tend to deep holes.
\end{corollary}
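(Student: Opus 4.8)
The plan is to deduce the statement directly from Theorem~\ref{cor:rho} by exploiting the translation invariance of the lattice sum. The crucial observation is that for any lattice vector $x_0 \in L$ one has $p(f_\alpha, L, z) = p(f_\alpha, L, z - x_0)$: substituting $y = x + x_0$ in the defining series merely reindexes the sum over $L$ without changing its value. Hence $z \mapsto p(f_\alpha, L, z)$ is $L$-periodic, and we are free to translate $z$ by a lattice vector into any convenient region, in particular into a neighborhood of the origin, which is a lattice point.

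Now suppose $z$ is not a deep hole. Then its distance to the lattice satisfies $\varrho_z = \min_{x \in L} \|x - z\| < \mu(L)$; let $x_0 \in L$ realize this minimum, so that $\|z - x_0\| = \varrho_z$. Choosing any $\varrho$ with $\varrho_z < \varrho < \mu(L)$ — which is possible precisely because $z$ is not a deep hole — places the translated point $z - x_0$ inside $B(0,\varrho)$. Applying Theorem~\ref{cor:rho} with this $\varrho$ yields a threshold $\alpha_\varrho$ such that $p(f_\alpha, L, w) > p(f_\alpha, L, c)$ for every $w \in B(0,\varrho)$ and every $\alpha > \alpha_\varrho$. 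Taking $w = z - x_0$ and invoking the periodicity above gives
\[
p(f_\alpha, L, z) = p(f_\alpha, L, z - x_0) > p(f_\alpha, L, c)
\]
for all $\alpha > \alpha_\varrho$, so we may set $\alpha_z = \alpha_\varrho$.

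I do not expect any serious obstacle: all the analytic content has already been carried out in Lemma~\ref{thm:ineqBP} and Theorem~\ref{cor:rho}, and what remains is the bookkeeping of the reduction. The two points that require care are (i) that a nearest lattice point $x_0$ exists and realizes a distance \emph{strictly} below $\mu(L)$, which is exactly what the hypothesis ``$z$ is not a deep hole'' guarantees, and (ii) that this hypothesis is genuinely needed: if $z$ were itself a deep hole, its potential could coincide with that of $c$ and the strict inequality would fail. This is precisely the gap flagged in the footnote to the original statement of B\'etermin and Petrache.

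For the concluding sentence, that cold spots tend to deep holes as $\alpha \to \infty$, the argument above shows that any fixed point which is not a deep hole is eventually beaten by $c$. Combined with the periodicity and the fact that it suffices to minimize over the compact Voronoi cell $V(L)$, this forces the minimizers to accumulate only at deep holes. I would state this last part informally, since the precise limiting statement is already recorded in the quantitative estimate of Theorem~\ref{cor:rho}.
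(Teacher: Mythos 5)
Your proposal is correct and is essentially the paper's own argument: the paper derives the corollary as an ``immediate consequence'' of Theorem~\ref{cor:rho}, relying on exactly the reduction you spell out, namely $L$-periodicity of $z \mapsto p(f_\alpha,L,z)$ together with the fact that a point which is not a deep hole lies, after translation by its nearest lattice vector, in a ball $B(0,\varrho)$ with $\varrho < \mu(L)$. Your explicit treatment of the choice of $\varrho_z < \varrho < \mu(L)$ and of why the non-deep-hole hypothesis is needed matches the paper's intent, including the footnoted gap in the original statement of B\'etermin and Petrache.
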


Note that the arguments given above are not strong enough to prove that cold spots are deep holes for finite $\alpha$.  The quantity $\mu(L)^2 - \| z \|^2$ cannot be uniformly bounded from below with respect to $z$ in the Voronoi cell of $L$.

\subsection{Bounding inhomogeneous Gaussian lattice sums: Using the
  linear programming bound for spherical designs around deep holes}
  \label{sec:LP}

Here we prove that if every shell around a deep hole is a spherical $2$-design, then the deep hole is a (strict) local minimizer when $\alpha$ is large enough. 
That is, we show:
%that there exists a radius $R > 0$ such that for any $z\neq c$ in the ball $B(c,R) = \{y \in \R^n : \|y - c\| < R\}$, the inequality $p(f_\alpha, L, z) > p(f_\alpha, L, c)$ holds. 

\begin{theorem}\label{lem:locmin}
Let $L$ be an $n$-dimensional lattice and $c$ be a deep hole of $L$. 
Assume that all inhomogeneous shells around $c$ are spherical $2$-designs.
Then there exist $\alpha_c$ and $R_{\alpha_c}$ such that for every $\alpha \geq \alpha_c$ and for every $z\neq c$ in the ball $B(c,R_{\alpha_c})$, strict inequality $p(f_\alpha, L, z) > p(f_\alpha, L, c)$ holds.
\end{theorem}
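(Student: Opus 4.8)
The plan is to localize the lattice sum around $c$ and turn the spherical $2$-design hypothesis into a genuine curvature gain via a Delsarte-type polynomial bound on each shell. Set $v=z-c$ and, for each radius $r$ with $L(c,r)\neq\emptyset$, write $X_r=L(c,r)-c$, a spherical $2$-design on $S^{n-1}(0,r)$. Expanding $\|x-z\|^2=\|x-c\|^2-2\langle x-c,v\rangle+\|v\|^2$ gives
\[
p(f_\alpha,L,z)=e^{-\alpha\|v\|^2}\sum_{r\geq 0}e^{-\alpha r^2}\,S_r(v),\qquad S_r(v)=\sum_{y\in X_r}e^{2\alpha\langle y,v\rangle},
\]
so that $p(f_\alpha,L,c)=\sum_{r\geq 0}e^{-\alpha r^2}|X_r|$. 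The prefactor $e^{-\alpha\|v\|^2}<1$ works against the inequality we want, so the whole task is to show that the shell sums $S_r(v)$ grow in $\|v\|$ fast enough to overcome it.

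For the main step---the linear programming bound for designs---I would bound each $S_r(v)$ from below by a quadratic lower envelope of $e^{2\alpha t}$ together with the cubature rule. Fix $r$ and restrict to $\|v\|\leq R$; then $t=\langle y,v\rangle$ ranges over $[-r\|v\|,r\|v\|]$. A convenient feasible choice is $q(t)=1+2\alpha t+c_r t^2$ with $c_r>0$ the largest admissible coefficient; since $s\mapsto(e^s-1-s)/s^2$ is increasing, the minimum is at the left endpoint and one finds
\[
c_r=\frac{e^{-2\alpha r\|v\|}-1+2\alpha r\|v\|}{(r\|v\|)^2}.
\]
Pulling $q$ back to a polynomial of degree at most $2$ in $y$ and applying the $2$-design property of $X_r$---the linear term averages to $0$ and $\langle y,v\rangle^2$ averages to $r^2\|v\|^2/n$, exactly the second-moment identity behind~\eqref{eq:Hessian-2-design}---yields
\[
S_r(v)\geq|X_r|\left(1+c_r\,\frac{r^2\|v\|^2}{n}\right)=|X_r|\left(1+\frac{\psi(2\alpha r\|v\|)}{n}\right),\qquad \psi(s)=e^{-s}-1+s\geq 0.
\]
Summing over shells gives the key inequality
\[
p(f_\alpha,L,z)\geq e^{-\alpha\|v\|^2}\left(p(f_\alpha,L,c)+\frac{1}{n}\sum_{r\geq 0}e^{-\alpha r^2}|X_r|\,\psi(2\alpha r\|v\|)\right),
\]
in which the extra sum is the quantitative curvature gain; the degree-$1$ part alone only recovers that $c$ is critical.

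It then remains to balance this gain against the Gaussian loss. I would keep only the innermost shell $r=\mu:=\mu(L)$ in the gain, and for $\alpha$ large bound $p(f_\alpha,L,c)\leq 2\,e^{-\alpha\mu^2}|X_\mu|$, the outer shells being relatively negligible since $\sum_{r>\mu}e^{-\alpha(r^2-\mu^2)}|X_r|\to 0$. Restricting to $\|v\|\leq R_{\alpha_c}:=\tfrac{1}{2\alpha\mu}$, so that $2\alpha\mu\|v\|\leq 1$, and using $\psi(s)\geq s^2/3$ on $[0,1]$, the desired inequality reduces to $e^{-\alpha\|v\|^2}(1+A\|v\|^2)>1$ with $A=\tfrac{2\alpha^2\mu^2}{3n}$. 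Since $\ln(1+A\|v\|^2)\geq A\|v\|^2-\tfrac12 A^2\|v\|^4$, this holds once $A>\alpha$ and $\|v\|^2<2(A-\alpha)/A^2$, which is satisfied for all $\alpha\geq\alpha_c$ (taking $\alpha_c$ large enough that $\tfrac{2\alpha^2\mu^2}{3n}>\alpha$ and the outer-shell bound holds) after possibly shrinking $R_{\alpha_c}$ to meet the second constraint. The main obstacle is precisely this balancing: the admissible coefficient $c_r$ degrades as the interval $[-r\|v\|,r\|v\|]$ widens, so the curvature gain (of order $\alpha^2\|v\|^2$) competes with the validity range of the quadratic bound. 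This forces $R_{\alpha_c}$ to shrink like $1/\alpha$, and one must verify that on this shrinking ball the order-$\alpha^2$ curvature still dominates the order-$\alpha$ Gaussian loss, while separately checking that the non-dominant shells neither spoil the lower bound nor inflate $p(f_\alpha,L,c)$.
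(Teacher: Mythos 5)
Your shell decomposition, the $2$-design cubature identity, and the feasibility of your quadratic envelope are all correct, and your overall philosophy (a Delsarte-type bound on each inhomogeneous shell) is the same as the paper's. But the conclusion you reach is strictly weaker than the theorem, and the gap is not cosmetic. The theorem requires a \emph{single} radius $R_{\alpha_c}$, fixed once and for all, such that the strict inequality holds for \emph{every} $\alpha\geq\alpha_c$ on the fixed ball $B(c,R_{\alpha_c})$. Your admissible ball has radius of order $1/(2\alpha\mu)$ --- both of your constraints, $2\alpha\mu\|v\|\leq 1$ and $\|v\|^2<2(A-\alpha)/A^2$ with $A=\tfrac{2\alpha^2\mu^2}{3n}$, scale like $1/\alpha$ --- so it shrinks to the point $c$ as $\alpha\to\infty$; "possibly shrinking $R_{\alpha_c}$" cannot fix this because the shrinking is $\alpha$-dependent while the theorem's quantifiers demand it be not. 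Worse, this is not repairable within your framework: since $\psi(s)=e^{-s}-1+s\leq s$, the strongest inequality your envelope can ever produce is
\[
\frac{p(f_\alpha,L,z)}{p(f_\alpha,L,c)}\ \geq\ e^{-\alpha\|v\|^2}\Bigl(1+\tfrac{1}{2n}\,\psi\bigl(2\alpha\mu\|v\|\bigr)\Bigr),
\]
and the right-hand side is at most $e^{-\alpha\|v\|^2}\bigl(1+\tfrac{\alpha\mu\|v\|}{n}\bigr)$, which for every \emph{fixed} $\|v\|=\rho>0$ tends to $0$ as $\alpha\to\infty$ (keeping all shells does not help: relative to $p(f_\alpha,L,c)$ the total gain is still $O(\alpha\rho)$). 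A quadratic envelope anchored tangentially at $t=0$ yields a gain that is only linear in $\alpha$, which must lose against the exponential loss $e^{-\alpha\rho^2}$ on any fixed ball. The uniformity you lose is exactly what the covering argument of Section~\ref{sec:mainproof} needs: with balls shrinking like $1/\alpha$, the leftover region $V(L)\setminus\bigcup_c B(c,R_\alpha)$ is only contained in $B(0,\varrho)$ with $\varrho\to\mu(L)$, and the threshold $\alpha_\varrho$ of Theorem~\ref{cor:rho} blows up as $\varrho\to\mu(L)$, so the two bounds can never be matched.

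The paper closes precisely this gap by a different choice of feasible polynomial in the same LP scheme (Lemma~\ref{lem:lp-method}). Normalizing $u=((x-c)/r)\cdot y\in[-1,1]$ and setting $\gamma=2\alpha\rho r$, Lemma~\ref{lem:LPsol} takes the Hermite interpolant of $e^{\gamma u}$ at $u=-1$ (simple) and $u=1/n$ (double) --- the choice that is tight for the regular simplex (Remark~\ref{rem:optimality}) --- whose mean value $h_0=\frac{n e^{\gamma/n}+e^{-\gamma}}{n+1}$ grows \emph{exponentially} in $\gamma$. Then $e^{-\alpha\rho^2}h_0>1$ holds roughly whenever $\rho<2\mu(L)/n$, and Lemma~\ref{lem:Ralpha} turns this into a radius $R_\alpha$ that is \emph{increasing} in $\alpha$ and bounded below by a constant, which is exactly what yields the uniform $R_{\alpha_c}$. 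The only difference between the two degree-$2$ polynomials is where they touch the exponential --- yours at $u=0$, the paper's at $u=1/n$ --- but that difference is the whole content of the lemma: it separates a linear-in-$\alpha$ gain (ball shrinking like $1/\alpha$) from an exponential one (fixed ball).
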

Moreover, explicit estimates on $\alpha_c$ and $R_{\alpha_c}$ will be given in Lemma~\ref{lem:Ralpha}.

\medskip

Qualitatively, it is easy to see that $c$ is a local minimizer for $\alpha$ large enough, using the computation of the Hessian in~\eqref{eq:Hessian-2-design}, which for the Gaussian core model evaluates to
\[
\nabla^2 p(f_\alpha, L, c) = \sum_{r \geq 0} |L(c,r)| \left( \frac{4\alpha^2 r^2}{n} - 2\alpha\right) e^{-\alpha r^2} I_n.
\]
Indeed, if $\alpha$ is sufficiently large, then all summands are positive multiples of the identity matrix.
This ensures the existence of a radius $R_\alpha>0$ such that $c$ is a minimizer in the ball $B(c,R_\alpha)$.

However, as we aim to combine this local information with Theorem~\ref{cor:rho} to obtain a global bound, we need to derive a uniform lower bound for $R_\alpha$ when $\alpha$ is large enough. 
The formula for the Hessian is easy to evaluate at a point $z$ only when all inhomogeneous shells around $z$ are spherical $2$-designs. So information about the Hessian at points close to $c$ seem to be difficult to get. A more delicate analysis is required.

For this, we will heavily use the linear programming bound for spherical designs, orginally developed by Fazekas and Levenshtein \cite{Fazekas1995a} and Yudin
  \cite{Yudin1995a} to determine lower bounds for the covering radius of spherical
designs. Later, in the context of polarization problems on spheres, this linear programming bound was modified in \cite{Boyvalenkov2022a} and \cite{Borodachov2022a}. 

\medskip

Let us recall the details of this linear programming bound. 
Let $a : [-1,1] \to \R$ be any function. We are
  interested in determining
  \[
  E_a(X) = \min\left\{\sum_{x \in X} a(x \cdot y) : y \in S^{n-1}\right\}.
  \]
  
  To state the linear programming bound for $E_a$ we need to introduce
  specific polynomials $P^n_k$.
  They are univariate polynomials of degree $k$,
  normalized by $P^n_k(1) = 1$, and orthogonal in the following sense
  \[
    \int_{-1}^1 P^n_k(t) P^n_l(t) (1-t^2)^{(n-3)/2}
    dt = 0 \quad \text{ if } k \neq l.
  \]
  In fact, the polynomials $P^n_k$ are appropriate multiples of the
  Jacobi polynomials $P^{(n-3)/2,(n-3)/2}_k$, see for instance the book
  \cite{Andrews1999a} by Andrews, Askey, and Roy. Note that we have
  \[
  P_0^n(t) = 1, \quad P_1^n(t) = t,\quad P_2^n(t) = \frac{1}{n-1}(nt^2-1).
  \]
  
  \begin{lemma}
   \label{lem:lp-method}
    Let $X \subseteq S^{n-1}$ be a spherical $M$-design and let $a :
    [-1, 1] \to \R$ be any function. Let $h(t) = \sum_{k = 0}^M h_k
    P^n_k(t)$ be a polynomial so that $h(t) \leq a(t)$ for all $t \in
    [-1,1]$, then the inequality $h_0 |X| \leq  E_a(X)$ holds.
  \end{lemma}
  
  \begin{proof}
    The inequality follows easily from orthogonality and the spherical
    design property. For $y \in S^{n-1}$,
  \[
  \begin{split}
      E_a(X) & \geq \sum_{x \in X} a(x \cdot y) \geq \sum_{x \in X} h(x \cdot
        y) = |X| \frac{1}{|X|} \sum_{x \in X} \sum_{k=0}^M h_k P_k^n(x \cdot y)\\
        & = |X| \sum_{k= 0}^M h_k \int_{S^{n-1}} P_k(x \cdot y) \, d\omega(x)\\
        & = |X| \sum_{k= 0}^M h_k \int_{-1}^1 P_k(t) (1-t^2)^{(n-3)/2} \, dt\\
        & = |X| h_0. \qedhere
  \end{split}
  \]
  \end{proof}
  
  As usual, the proof of the previous lemma provides additional
  information about sharp cases; that is, when there is a polynomial $h$
  and a point $y \in S^{n-1}$ with
  \[
    h_0 |X| = E_a(X) = \sum_{x \in X} a(x \cdot y).
  \]
  Indeed, one sees from the proof that this can only happen when
  $a(x \cdot y) = h(x \cdot y)$ for $x \in X$. Since we also need $h(t) \leq a(t)$ for
  all $t \in [-1,1]$ this also means that, if $a$ is differentiable, $h$
  and $a$ need to coincide up to second order when
  $x \cdot y \in (-1,1)$ for $x \in X$.
  
 \medskip 

To bound $p(f_\alpha, L, z)$ for points $z$ close to a deep hole $c$ we apply the linear programming bound for every concentric sphere $z = c + \rho y$, with $\rho > 0$ and $y \in S^{n-1}$, and for every inhomogeneous shell $L(c,r)$;
\[
\begin{split}
p(L,f_\alpha, z) & = \sum_{x \in L} e^{-\alpha \|x - z\|^2}
= \sum_{x \in L} e^{-\alpha\|x - c - \rho y\|^2} \\
& = \sum_{r \geq 0} e^{-\alpha r^2} e^{-\alpha \rho^2}
\sum_{x \in L(c,r)} e^{2\alpha \rho r ((x-c)/r) \cdot  y};
\end{split}
\]
see Figure~\ref{fig:strategy-LP}.

\begin{figure}[htb]
  \begin{center}
   \includegraphics[width=8cm]{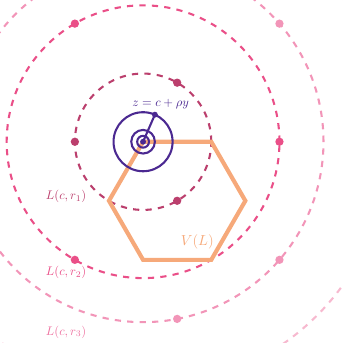}
  \end{center}
    \caption{Our strategy is to bound the inhomogeneous Gaussian lattice sum close to
      the deep hole $c$ using the linear programming bound for \textit{every}
      inhomogeneous shell $L(c,r)$ and for the points on \textit{every}
      concentric sphere $c + \rho y$, with $y \in S^{n-1}$, around the
      deep hole $c$.}
    \label{fig:strategy-LP}
\end{figure}

\begin{lemma}\label{lem:LPsol}
Suppose the inhomogeneous shell $L(c,r)$ is a spherical $2$-design. Then 
\begin{equation}
  \label{eq:bound}
\sum_{x \in L(c,r)} e^{2\alpha \rho r ((x-c)/r) \cdot  y} \geq |L(c,r)| b(\alpha, \rho, r)
\end{equation}
 holds with
\[
b(\alpha, \rho, r) = 
\frac{ne^{\frac{2\alpha \rho r}{n}}+e^{-2\alpha \rho r}}{n+1},
\]
and if $L(c,r)$ is additionally centrally symmetric, then
\[
b(\alpha, \rho, r) = \cosh\left(\frac{2\alpha \rho r}{\sqrt{n}}\right).
\]
\end{lemma}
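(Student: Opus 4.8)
The plan is to recognize the left-hand side of \eqref{eq:bound} as an instance of the functional $E_a$ from Lemma~\ref{lem:lp-method} and apply the linear programming bound. Writing $u_x = (x-c)/r$, the set $X = \{u_x : x \in L(c,r)\}$ lies on $S^{n-1}$ and is a spherical $2$-design because $L(c,r)$ is. Setting $c_0 = 2\alpha\rho r > 0$ and $a(t) = e^{c_0 t}$, the sum in \eqref{eq:bound} is exactly $\sum_{x} a(u_x \cdot y)$, so it suffices to exhibit a degree-$2$ polynomial $h(t) = h_0 P_0^n(t) + h_1 P_1^n(t) + h_2 P_2^n(t)$ with $h(t) \le e^{c_0 t}$ on $[-1,1]$ and to identify its constant coefficient $h_0$; then Lemma~\ref{lem:lp-method} gives $\sum_x a(u_x\cdot y) \ge h_0 |X| = h_0 |L(c,r)|$ for every $y \in S^{n-1}$.

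I would choose $h$ to be sharp for the regular simplex. Let $v_0,\dots,v_n$ be the vertices of a regular simplex inscribed in $S^{n-1}$, so $v_i \cdot v_j = -1/n$ for $i\neq j$, and set $y_0 = -v_0$; then $v_0\cdot y_0 = -1$ while $v_i \cdot y_0 = 1/n$ for $i\ge 1$. Define $h$ as the unique quadratic with $h(1/n)=e^{c_0/n}$, $h'(1/n)=c_0 e^{c_0/n}$, and $h(-1)=e^{-c_0}$; that is, $h$ is tangent to $e^{c_0 t}$ to second order at the interior value $1/n$ and meets it at the endpoint $-1$. Since $\{v_i\}$ is a spherical $2$-design and $\deg h = 2$, averaging yields $\sum_i h(v_i\cdot y_0) = (n+1)h_0$; on the other hand $h$ and $a$ agree at each $v_i\cdot y_0$ by construction, so $\sum_i h(v_i\cdot y_0) = n e^{c_0/n}+e^{-c_0}$. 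Comparing gives exactly $h_0 = \frac{n e^{c_0/n}+e^{-c_0}}{n+1} = b(\alpha,\rho,r)$.

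The main obstacle is the global inequality $h(t)\le e^{c_0 t}$ on $[-1,1]$. I would study $g(t) = e^{c_0 t} - h(t)$, which by construction has a double zero at $t=1/n$ and a zero at $t=-1$. Since $g'''(t)=c_0^3 e^{c_0 t}>0$, the function $g''$ is strictly increasing and hence vanishes at most once, so by Rolle's theorem $g$ has at most three zeros counted with multiplicity. The zeros already found account for all three; in particular the root at $1/n$ has multiplicity exactly $2$, forcing $g''(1/n)>0$. Combined with $g(t)\to+\infty$ as $t\to+\infty$, a short sign analysis then shows $g\ge 0$ on $[-1,1]$. This third-derivative root-counting step is the only part requiring genuine care.

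For the centrally symmetric case I would first symmetrize: since $X=-X$, replacing $a$ by its even part $\tilde a(t) = \tfrac12(e^{c_0 t}+e^{-c_0 t})=\cosh(c_0 t)$ leaves the sum unchanged, and one may restrict to an even polynomial $h(t)=h_0 P_0^n(t)+h_2 P_2^n(t)$. The sharp configuration becomes the cross-polytope $\{\pm e_i\}$, a spherical $2$-design whose inner products with the direction $y_0=(1,\dots,1)/\sqrt{n}$ are all $\pm 1/\sqrt{n}$. Imposing tangency of $h$ to $\cosh(c_0 t)$ at $t=\pm 1/\sqrt{n}$ and repeating the averaging argument over the cross-polytope gives $h_0=\cosh(c_0/\sqrt{n})=\cosh(2\alpha\rho r/\sqrt{n})$, while $h(t)\le\cosh(c_0 t)$ on $[-1,1]$ follows from the same root-counting argument applied to the even function $\cosh(c_0 t)-h(t)$.
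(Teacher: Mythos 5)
Your proposal takes the same route as the paper: both apply Lemma~\ref{lem:lp-method} with $a(t)=e^{c_0t}$, $c_0=2\alpha\rho r$, and both take for $h$ the quadratic Hermite interpolant with a double node at $t=1/n$ and a simple node at $t=-1$ (respectively, in the symmetric case, the even quadratic tangent to $\cosh(c_0t)$ at $t=\pm 1/\sqrt{n}$). Two of your sub-steps are handled differently, and correctly, in the general case. First, you extract $h_0$ by averaging $x\mapsto h(x\cdot y_0)$ over the regular simplex, using the $2$-design property and the fact that the interpolation nodes $-1$ and $1/n$ are exactly the inner products occurring there; this replaces the paper's explicit coefficient formulas and is essentially the tightness computation of Remark~\ref{rem:optimality} run backwards. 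Second, you prove $h(t)\le e^{c_0t}$ on $[-1,1]$ by direct zero counting ($g'''>0$, so $g''$ is strictly increasing and has at most one zero, so $g=a-h$ has at most three zeros with multiplicity, which are exhausted by the interpolation data), where the paper instead cites Lemma~2.1 of Cohn--Kumar. This argument is sound.

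The one step that fails as written is your last sentence: in the centrally symmetric case, ``the same root-counting argument'' does not apply verbatim to $g(t)=\cosh(c_0t)-h(t)$. There $g'''(t)=c_0^3\sinh(c_0t)$ changes sign at $t=0$, so $g''$ is not monotone on $[-1,1]$ (being even, it generically has two zeros), and the conclusion ``at most three zeros with multiplicity'' is in fact false: your own construction gives $g$ four zeros with multiplicity, namely a double zero at each of $\pm 1/\sqrt{n}$. The repair is easy and stays inside your framework. Either redo the count using that $g''$ is even and strictly increasing on $[0,\infty)$, hence has at most two real zeros, so $g$ has at most four zeros with multiplicity, which are exactly the two double nodes, and then run your sign analysis at both nodes; or, more cleanly, substitute $u=t^2$: then $\cosh(c_0t)=F(u)$ with $F(u)=\sum_{k\ge 0}c_0^{2k}u^k/(2k)!$ strictly convex on $[0,\infty)$, the even quadratic $h$ becomes an affine function $G(u)$, and a strictly convex function minus an affine one with a double zero at $u=1/n$ is automatically nonnegative. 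With that one fix your proof is complete and agrees with the paper's lemma.
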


\begin{proof}
\begin{enumerate}
\item
We apply Lemma \ref{lem:lp-method} to  $X = \{ \frac{x-c}{r} :  x\in L(c,r)\}$, and functions $a(t)$ of the form 
  \[
  a_\gamma(t) = \exp(\gamma t) \quad \text{with $\gamma = 2\alpha \rho r$.}
  \]
Then a feasible solution for Lemma~\ref{lem:lp-method} is given by:
\[
h_\gamma(t) = h_0P_0^n(t) + h_1P_1^n(t) + h_2P_2^n(t)
\]
with
\begin{align*}
h_0 &= \frac{ne^{\frac{\gamma}{n}}+e^{-\gamma}}{n+1}
\\
h_1 &= \frac{(\gamma n^2 - \gamma +2n)e^{\frac{\gamma}{n}}-2ne^{-\gamma}}{(n+1)^2}
\\
h_2 &= \frac{((\gamma-1)n^2 - \gamma + n )e^{\frac{\gamma}{n}}+ n(n-1) e^{-\gamma}}{(n+1)^2}.
\end{align*}
The polynomial $h$ interpolates the function $a_\gamma(t)$ at the points $t = -1$ with multiplicity $1$, and $t = 1/n$ with multiplicity $2$. By applying Lemma~2.1 from \cite{Cohn2007a}, we have $h_\gamma(t) \leq \exp(\gamma t)$ on $[-1,1]$.

\item
Because the inhomogeneous shell $L(c,r)$ is centrally symmetric we can rewrite the sum
\[
\sum_{x \in L(c,r)} e^{2\alpha \rho r ((x-c)/r) \cdot  y}  = 
\sum_{x \in L(c,r)} \cosh(2 \alpha \rho r ((x-c)/r) \cdot y).
\]
We apply Lemma~\ref{lem:lp-method} to functions $a(t)$ of the form 
\[
a_\gamma(t) = \cosh(\gamma t) \quad \text{with $\gamma = 2\alpha \rho r$.}
\]
Then a feasible solution for Lemma~\ref{lem:lp-method} is given by:
\[
h_\gamma(t) = 
%\cosh\left(\frac{\gamma}{\sqrt{n}}\right) + \frac{\gamma}{2}\sinh\left(\frac{\gamma}{\sqrt{n}}\right)\frac{1}{\sqrt{n}}(nt^2-1)
 \cosh\left(\frac{\gamma}{\sqrt{n}}\right) P_0^n(t) + \frac{\gamma}{2}\sinh\left(\frac{\gamma}{\sqrt{n}}\right)\frac{n-1}{\sqrt{n}}P_2^n(t).
\]
Now that $a_\gamma$ is even, we may assume that $h_\gamma$ is even. 
This leaves freedom only to the coefficients $h_0$ and $h_2$. We use Hermite interpolation at $t=\frac{1}{\sqrt{n}}$ at order $2$, and apply again \cite[Lemma~2.1]{Cohn2007a} to show $h_\gamma(t) \leq \exp(\gamma t)$ on $[-1,1]$. 
\end{enumerate}
\end{proof}

\begin{remark}
  \label{rem:optimality}
  The bounds of the previous lemma are in fact optimal.
  
  In the first case, the bound is tight, when $X$ are the vertices of a regular simplex and when $y = - v$, where $v$ is any vertex of the simplex. 
  Then the inner products between $y$ and the elements of $X$ are the interpolation points $-1$ and $1/n$. 
  Therefore,
  \[
  E_{a_\gamma}(X) \leq \sum_{x\in X} a_\gamma (x \cdot y) = \sum_{x\in X} h_\gamma (x \cdot y) = h_0 |X| \leq  E_{a_\gamma}(X),
  \]
  which shows the optimality of the bound.

In the second case, the bound is tight when $X$ are the vertices of a cross polytope, namely  $X=\{\pm e_i\}$, where $e_i$ is the $i$-th vector of the canonical basis. For $y = \frac{1}{\sqrt{n}} \sum_{i = 1}^n e_i$, then the inner products between $y$ and the elements of $X$ are the interpolation points $\pm \frac{1}{\sqrt{n}}$, which proves optimality.
\end{remark}

With Lemma~\ref{lem:LPsol}, we have
\[
p(L,f_\alpha, z) \geq \sum_{r \geq 0} |L(c,r)| e^{-\alpha r^2} e^{-\alpha \rho^2}
 b(\alpha, \rho, r).
\]
  
On the other hand,
\[
p(f_\alpha, L, c)  = \sum_{r \geq 0} |L(c,r)| e^{-\alpha r^2} .
\]
Thus, if for a given $\alpha$, we find $R$ such that for every $0< \rho < R$, and for every $r$ the strict inequality
\begin{equation}
  \label{eq:ineq}
e^{-\alpha \rho^2} b(\alpha,\rho,r) > 1
\end{equation}
holds, then we get that for every $z\neq c$ in the ball $B(c,R)$, 
\[
p(f_\alpha, L, z) - p(f_\alpha, L, c) \geq \sum_{r \geq 0}  |L(c,r)| e^{-\alpha  r^2}  (e^{-\alpha \rho^2} b(\alpha,\rho,r) - 1) > 0.
\]

We need to verify inequality \eqref{eq:ineq}. Note that since both expressions $b(\alpha, \rho, r)$ are increasing with $r$, if \eqref{eq:ineq} is satisfied for $r = \mu(L)$, then it is satisfied for every $r \geq \mu(L)$. 

\smallskip

We therefore consider the function
\[
g(\rho) = e^{-\alpha \rho^2}b(\alpha,\rho,\mu(L)) - 1,
\]
%With straightforward computations, one gets, for both values of $b(\alpha,\rho,\mu(L))$: 
%\[
%g(0) = 0,\quad g'(0) = 0, \quad g''(0) = 2 \alpha\left(\frac{2\alpha \mu(L)^2}{n}-1\right),
%\]
%so that whenever $\alpha > \frac{n}{2 \mu(L)^2}$, $0$ is a local minimum of $g$, 
and look for $R_\alpha>0$ such that for every $0 < \rho < R_\alpha$, $g(\rho)>0$.
%The following lemma gives precise estimates on the radius $R_\alpha$ in both situations.

\begin{lemma}
\label{lem:Ralpha}
In the general case, when the inhomogeneous shells are spherical 2-designs,
then for every $\alpha$ such that $\alpha > \frac{n^2}{ \mu(L)^2}(\frac{n+1}{2n} + \log(\frac{n+1}{n})^2)$, then $g(\rho) > 0$ on the interval $\left(0, R_\alpha \right]$, where
\[
R_\alpha = \frac{\mu(L)}{n} + \sqrt{\frac{\mu(L)^2}{n^2}- \frac{\log(\frac{n+1}{n})}{\alpha}}.
\]
When, moreover, the inhomogeneous shells are centrally symmetric, then the inequality 
 $\alpha > \frac{n}{ \mu(L)^2}(1/2 + \log(2)^2)$ is sufficient, and we can take 
 \[
R_\alpha = \frac{\mu(L)}{\sqrt{n}} + \sqrt{\frac{\mu(L)^2}{n}- \frac{\log(2)}{\alpha}}.
\]
\end{lemma}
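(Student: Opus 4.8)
The plan is to treat Lemma~\ref{lem:Ralpha} as a purely one-variable problem. By the reduction preceding the statement it suffices, writing $\mu := \mu(L)$, to prove that
\[
g(\rho) = e^{-\alpha\rho^2} b(\alpha,\rho,\mu) - 1 > 0 \quad \text{for all } \rho \in (0, R_\alpha],
\]
in each of the two cases for $b$. Taking logarithms, I set $\phi(\rho) = \log b(\alpha,\rho,\mu) - \alpha\rho^2$, so that $g(\rho) > 0 \iff \phi(\rho) > 0$; note $\phi(0) = 0$ since $b(\alpha,0,\mu) = 1$.

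First I would pin down $R_\alpha$ from a one-term lower bound on $b$. In the general case, dropping the positive summand $e^{-2\alpha\rho\mu}$ gives $b(\alpha,\rho,\mu) > \tfrac{n}{n+1} e^{2\alpha\rho\mu/n}$; in the centrally symmetric case, $\cosh(x) > \tfrac12 e^x$ gives $b(\alpha,\rho,\mu) > \tfrac12 e^{2\alpha\rho\mu/\sqrt n}$. In both cases the inequality [one-term bound] $\ge e^{\alpha\rho^2}$ is equivalent, after taking logs, to a quadratic $q(\rho) \le 0$ whose larger root is exactly the stated $R_\alpha$, and whose discriminant is nonnegative precisely when $\alpha$ exceeds the corresponding discriminant threshold ($\tfrac{n^2}{\mu^2}\log\tfrac{n+1}{n}$, resp.\ $\tfrac{n}{\mu^2}\log 2$). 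Since $q(R_\alpha) = 0$ by construction, the one-term bound becomes an equality after exponentiating at $\rho = R_\alpha$, and restoring the discarded strictly positive term yields $\phi(R_\alpha) > 0$ strictly.

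The remaining, and main, difficulty is to upgrade ``$\phi(R_\alpha) > 0$'' to ``$\phi > 0$ on all of $(0, R_\alpha]$'', because the one-term bound is useless near $\rho = 0$: there the discarded term is exactly what makes $\phi$ positive, via positive-definiteness of the Hessian at $c$. I would resolve this by showing $\phi$ is \emph{single-peaked} on $(0,\infty)$, i.e.\ it increases to a unique maximum and then decreases; such a $\phi$ with $\phi(0)=0$ is positive on the whole interval up to its unique positive zero $\rho_0$, and $\phi(R_\alpha) > 0$ then forces $R_\alpha < \rho_0$, giving $\phi > 0$ on $(0, R_\alpha]$. Single-peakedness follows from $\phi(0) = \phi'(0) = 0$ together with the fact that $\phi'' = (\log b)'' - 2\alpha$ changes sign exactly once, from $+$ to $-$. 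For this one checks that $(\log b)''$ is positive and strictly decreasing in $\rho$ in both cases: for the symmetric case $(\log\cosh(c\rho))'' = c^2\,\mathrm{sech}^2(c\rho)$ with $c = 2\alpha\mu/\sqrt n$; in the general case a short computation gives $(\log b)'' = \left(\tfrac{2\alpha\mu}{n}\right)^2 (n+1)^2\, pq/(p+q)^2$ with $p = n e^{2\alpha\rho\mu/n}$ and $q = e^{-2\alpha\rho\mu}$, which decreases because $pq/(p+q)^2 = 1/(t+2+t^{-1})$ with $t = p/q \ge 1$ increasing. Meanwhile $\phi''(0) = \tfrac{4\alpha^2\mu^2}{n} - 2\alpha > 0$ under the Hessian threshold $\alpha > n/(2\mu^2)$, so $\phi'$ rises from $0$, stays positive until a single downward zero-crossing, and $\phi$ is increasing-then-decreasing.

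The explicit $\alpha$-threshold in the statement is then simply a single closed-form bound chosen to dominate both requirements used above, namely the local strict convexity of $z \mapsto p(f_\alpha, L, z)$ at $c$ (the Hessian condition $\phi''(0)>0$) and nonnegativity of the discriminant defining $R_\alpha$; one verifies directly that $\tfrac{n+1}{2n} + \log(\tfrac{n+1}{n})^2$ exceeds both $\tfrac{1}{2}$ and $\log\tfrac{n+1}{n}$, and similarly in the symmetric case. I expect the genuinely delicate point to be not the endpoint estimate but exactly this monotonicity of $(\log b)''$, that is, verifying that $\log b$ is convex with curvature maximal at $\rho = 0$ and decaying thereafter. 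Granting that, the single-peak structure and hence the whole lemma follow cleanly, with the two cases running in complete parallel under $n \leftrightarrow \sqrt n$ and $\log\tfrac{n+1}{n} \leftrightarrow \log 2$.
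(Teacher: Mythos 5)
Your proposal is correct, but it handles the crucial part of the argument --- positivity of $g$ for small $\rho$ --- by a genuinely different route than the paper. Both proofs begin identically: drop the term $e^{-2\alpha\rho\mu(L)}/(n+1)$ (resp.\ use $\cosh x > \tfrac12 e^x$), which turns $g(\rho)>0$ into a quadratic inequality in $\rho$ whose larger root is exactly $R_\alpha$ and whose discriminant is nonnegative under the stated threshold. From there the paper stays quantitative: it notes the one-term bound gives $g>0$ on the whole interval $[r_\alpha,R_\alpha]$ between the two roots, and then covers the remaining piece $(0,r_\alpha]$ by explicit Taylor-type lower bounds ($e^{-\alpha\rho^2}\ge 1-\alpha\rho^2$, $e^x > 1+x+x^2/2$, etc.), with the specific constant $\tfrac{n+1}{2n}+\log(\tfrac{n+1}{n})^2$ chosen precisely so that this inner polynomial estimate stays positive past $r_\alpha$. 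You instead use the one-term bound only at the endpoint $R_\alpha$ and replace the inner estimate by a unimodality argument: $\phi=\log\bigl(e^{-\alpha\rho^2}b\bigr)$ satisfies $\phi(0)=\phi'(0)=0$, and $\phi''=(\log b)''-2\alpha$ changes sign exactly once because $(\log b)''=\bigl(\tfrac{2\alpha\mu(L)}{n}\bigr)^2(n+1)^2\,pq/(p+q)^2$ (resp.\ $c^2\operatorname{sech}^2(c\rho)$) is positive and strictly decreasing --- your computation of this second derivative checks out, and $t\mapsto t+2+t^{-1}$ is indeed increasing for $t=p/q\ge n$. So $\phi$ increases then decreases, and $\phi(R_\alpha)>0$ (strict, since the discarded term is positive) forces $\phi>0$ on all of $(0,R_\alpha]$. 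What your route buys: it is logically cleaner (no splitting of the interval, no chaining of polynomial bounds) and it actually proves the lemma under the weaker hypotheses that the discriminant is nonnegative and $\alpha>n/(2\mu(L)^2)$, both of which the stated threshold dominates, as you verify. What the paper's route buys: it uses only elementary term-by-term inequalities, with no need to establish the sign-change structure of $\phi''$, and its explicit estimates are in the same style used later to extract concrete values of $\alpha_0$ for $E_8$ and $D_4$.
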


\begin{proof}
We focus on the general case where 
\[
g(\rho) = e^{-\alpha \rho^2}\left(\frac{ne^{\frac{2\alpha \rho \mu(L)}{n}}+e^{-2\alpha \rho \mu(L)}}{n+1}\right) - 1,
\]
because the second, more specific case, is similar.
The strategy consists in using two different estimates: a first one to ensure the positivity of $g(\rho)$ for $\rho$ in an interval of the form $[r_\alpha, R_\alpha]$, and second one, using the Taylor expansion of $g$ around $0$, to make sure that $g(\rho) >0$ for $\rho \in (0,r_\alpha)$.  

First, we use the trivial lower bound 
\[
\frac{ne^{\frac{2\alpha \rho \mu(L)}{n}}+e^{-2\alpha \rho \mu(L)}}{n+1} > \frac{ne^{\frac{2\alpha \mu(L) \rho}{{n}}}}{n+1},
\]
which gives
\[
g(\rho) > \frac{ne^{\alpha \rho\left(\frac{2\mu(L)}{n}-\rho\right)}-(n+1)}{n+1}
\]
and therefore $g(\rho) >0$ whenever
\[
\alpha \rho\left(\frac{2\mu(L)}{n}-\rho\right) > \log\left(\frac{n+1}{n}\right).
\]
Because $\frac{n+1}{2n} + \log\left(\frac{n+1}{n}\right)^2 > \log\left(\frac{n+1}{n}\right)$, the assumption on $\alpha$ ensures that $g(\rho)>0$ on the interval 
\[
\left[ \frac{\mu(L)}{{n}} - \sqrt{\frac{\mu(L)^2}{n^2} - \frac{\log\left(\frac{n+1}{n}\right)}{\alpha}},  \frac{\mu(L)}{{n}} + \sqrt{\frac{\mu(L)^2}{n^2} - \frac{\log\left(\frac{n+1}{n}\right)}{\alpha}} \right]
.\]

It remains to deal with small $\rho$. Note that because $\sqrt{1-x} \geq 1-x$ for every $0\leq x \leq 1$, we have
\[
\frac{\mu(L)}{n} - \sqrt{\frac{\mu(L)^2}{n^2} - \frac{\log\left(\frac{n+1}{n}\right)}{\alpha}} = \frac{\mu(L)}{n} - \frac{\mu(L)}{n}\sqrt{1 - \frac{\log\left(\frac{n+1}{n}\right)n^2}{\alpha\mu(L)^2}} \leq \frac{\log\left(\frac{n+1}{n}\right){n}}{\alpha\mu(L)}
\]
and therefore it is sufficient to prove that $g(\rho)>0$ on $\left(0,\frac{\log\left(\frac{n+1}{n}\right){n}}{\alpha\mu(L)} \right]$.
For $\rho$ in this interval, we have
\[
e^{-\alpha \rho^2} \geq 1 -\alpha \rho^2 > 0.
\]
Moreover, since
\[
e^{\frac{2\alpha \rho \mu(L)}{n}} > 1 + \frac{2\alpha \rho \mu(L)}{n} + \frac{2\alpha^2 \rho^2 \mu(L)^2}{n^2}
\]
and
\[
e^{-2\alpha \rho \mu(L)} > 1 - 2\alpha \rho \mu(L),
\]
then 
\[
ne^{\frac{2\alpha \rho \mu(L)}{n}}+e^{-2\alpha \rho \mu(L)}> n+1 + \frac{2\alpha^2 \rho^2 \mu(L)^2}{n}.
\]
We get
\[
g(\rho) > (1 -\alpha \rho^2)\left(1 + \frac{2\alpha^2 \mu(L)^2 \rho^2}{n(n+1)}\right) - 1 = -\alpha \rho^2\left( 1 - \frac{2\alpha \mu(L)^2}{n(n+1)} + \frac{2\alpha^2 \mu(L)^2 \rho^2}{n(n+1)}  \right),
\]
which is positive between $0$ and
\[
\sqrt{\frac{1}{\alpha} - \frac{n(n+1)}{2\alpha^2 \mu(L)^2}} = \sqrt{\frac{\alpha - \frac{n(n+1)}{2\mu(L)^2}}{\alpha^2}}. 
\]
Thus $g(\rho) > 0$ on $\left(0,\frac{\log\left(\frac{n+1}{n}\right){n}}{\alpha\mu(L)} \right]$ whenever $\alpha - \frac{n(n+1)}{2\mu(L)^2} > \frac{\log\left(\frac{n+1}{n}\right)^2n^2}{\mu(L)^2}$, which is ensured by the assumption on $\alpha$.

For the second case, we use the similar estimates
\[
\cosh\left(\frac{2\alpha \mu(L) \rho}{\sqrt{n}}\right) > \frac{e^{\frac{2\alpha \mu(L) r}{\sqrt{n}}}}{2}, 
\]
and
\[
g(\rho) > (1 -\alpha \rho^2)\left(1 + \frac{2\alpha^2 \mu(L)^2  \rho^2}{n}\right) - 1 = -\alpha \rho^2\left( 1 - \frac{2\alpha \mu(L)^2}{n} + \frac{2\alpha^2 \mu(L)^2 \rho^2}{n}  \right).
\]
\end{proof}

From Lemma~\ref{lem:Ralpha}, we can easily deduce Theorem~\ref{lem:locmin}:

\begin{proof}[Proof of Theorem~\ref{lem:locmin}]
In the general case (respectively the centrally symmetric case), we can take any $\alpha_c> \frac{n^2}{ \mu(L)^2}(\frac{n+1}{2n} + \log(\frac{n+1}{n})^2)$ (respectively $\alpha_c > \frac{n}{ \mu(L)^2}(1/2 + \log(2)^2)$).
Then, because the function $\alpha \mapsto R_\alpha$ is increasing with $\alpha$, $c$ is the unique minimizer of $z\mapsto p(f_\alpha,L,z)$ in the ball $B(c,R_{\alpha_c})$, for every $\alpha \geq \alpha_c$.
\end{proof}

\subsection{Putting the two bounds together, finishing the proof of Theorem~\ref{thm:main}}\label{sec:mainproof}

The assumptions of Theorem~\ref{thm:main} allow us to apply Theorem~\ref{lem:locmin}.
We have for every $\alpha \geq \alpha_c$ and for every $z \neq c$ in the ball $B(c,R_{\alpha_c})$, 
\[
p(f_\alpha, L, z) > p(f_\alpha, L, c).
\]
Since the deep holes of $L$ are precisely the points in $V(L)$ having norm $\mu(L)$, there exists $0 < \varrho < \mu(L)$ such that 
\[
V(L) \setminus \bigcup_{c} B(c, R_{\alpha_c}) \subseteq B(0, \varrho),
\]
where the union is taken over all the deep holes in $V(L)$.
Figure~\ref{fig:strategy-gen} illustrates this covering argument.

\begin{figure}[hbt]
  \begin{center}
   \includegraphics[width=7cm]{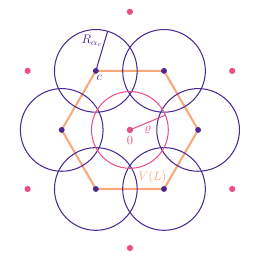}
  \end{center}
    \caption{Covering $V(L)$ by balls of two kinds.}
    \label{fig:strategy-gen}
\end{figure}

We can now apply Theorem~\ref{cor:rho}, which provides  $\alpha_\varrho$ such that for every $\alpha > \alpha_\varrho$ and every $z \in B(0, \varrho)$, 
\[
p(f_\alpha, L, z) > p(f_\alpha, L, c).
\]
The desired result then follows by taking $\alpha_0 = \max\{\alpha_c, \alpha_\varrho\}$.

\qed

\section{Cold spots of root lattices}
\label{sec:inhomogeneous-shells}

We now move to prepare the proof of Theorem \ref{thm:rootlattices}, asserting that for large enough $\alpha$ the cold spots of a root lattice are equal to its deep holes.

For this we firstly discuss how inhomogeneous energy behaves if a lattice is decomposable, i.e. an orthogonal sum of sublattices. 
This allows us to restrict all investigations to indecomposable root lattices, about which we collect some basic facts in Section \ref{sec:special:points:lattices} and Appendix \ref{sec:Appendix:root-lattices}.

In lieu of Theorem \ref{thm:main} we then only need to show that for an indecomposable root lattice $L$ and a deep hole $c$ of $L$ the inhomogeneous shells 
\[
  L(c,r) = \{x \in L : \|x-c\| = r\}
\]
form spherical $2$-designs whenever they are non-empty.
This will be achieved in Theorem \ref{thm:deep-hole-designs} and is based on the fact that the stabilizer group of $c$ contains a Weyl group which induces the necessary design strength.

While, for the present application, it is sufficient that the stabilizer group of the deep hole $c$ contains a suitable Weyl group, we can give a more precise characterization of the stabilizer group, which we present in Lemma \ref{lem:deephole:stabilizer}.

We conclude the section by illustrating how to compute an explicit $\alpha_0$ for the application of Theorem \ref{thm:main}. 
We do this for the root lattices $D_4$ and $E_8$ in Theorem \ref{thm:d4} and Theorem \ref{thm:e8}.

\subsection{Decomposable lattices}

We will call a lattice $L$ in $\R^n$ \emph{decomposable} if we can find non trivial sublattices $L_1$ on $V_1 = \R L_1$ and $L_2$ on $V_2 = \R L_2$ with
\[
  L = L_1 \perp L_2,
\]
otherwise $L$ is called \emph{decomposable}.
A general feature of decomposable lattices is that (inhomogeneous) energy factors multiplicatively through the orthogonal summands.

\begin{lemma} \label{lem:decomposable}
Let $L\subset\R^n$ be a lattice that can be decomposed as the orthogonal direct sum of lattices
\[
L = L_1 \perp L_2 \perp \ldots \perp L_m,
\]
namely there are subspaces $V_1, \ldots, V_m \subset \R^n$ such that $L_i$ is a lattice in $V_i$ for every $1 \leq i \leq m$, and $\R^n = V_1 \perp V_2 \perp \ldots \perp V_m$.
Then for any $\alpha > 0$ and $z = z_1 + \ldots + z_m$ with $z_i \in V_i$, we have
\[
p(f_\alpha, L, z) = p(f_\alpha, L_1, z_1)p(f_\alpha, L_2, z_2)  \cdots p(f_\alpha, L_m, z_m).
\]
In particular, if for every $1 \leq i \leq m$, $c_i$ is a mimimizer of $z_i \mapsto p(f_\alpha, L_i, z_i)$, then $c = c_1 + \ldots + c_m$ is a minimizer of $z \mapsto p(f_\alpha, L, z)$.
\end{lemma}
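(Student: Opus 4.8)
The plan is to exploit the defining feature of the Gaussian potential: since $f_\alpha(\|x-z\|^2) = e^{-\alpha\|x-z\|^2}$ and the squared Euclidean norm is additive over mutually orthogonal subspaces, the exponential factors into a product. First I would use the orthogonal direct sum structure to write every lattice vector $x \in L$ uniquely as $x = x_1 + \ldots + x_m$ with $x_i \in L_i \subseteq V_i$. This identification of $L$ with the Cartesian product $L_1 \times \cdots \times L_m$ is exactly the content of the hypotheses $L = L_1 \perp \cdots \perp L_m$ and $\R^n = V_1 \perp \cdots \perp V_m$.

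The key computation is the Pythagorean identity. Writing $z = z_1 + \ldots + z_m$ with $z_i \in V_i$ and using the mutual orthogonality of the $V_i$, the cross terms $(x_i - z_i) \cdot (x_j - z_j)$ vanish for $i \neq j$, so that
\[
\|x - z\|^2 = \sum_{i=1}^m \|x_i - z_i\|^2.
\]
Exponentiating turns this additive splitting into a product, $e^{-\alpha\|x-z\|^2} = \prod_{i=1}^m e^{-\alpha\|x_i - z_i\|^2}$.

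Next I would sum over $x \in L$, which under the above identification becomes an $m$-fold sum over the tuples $(x_1, \ldots, x_m)$. Because every summand is positive and the Gaussian lattice sums converge absolutely, the distributive law for absolutely convergent series (equivalently, Tonelli's theorem) lets me interchange the sum with the product, giving
\[
p(f_\alpha, L, z) = \prod_{i=1}^m \sum_{x_i \in L_i} e^{-\alpha\|x_i - z_i\|^2} = \prod_{i=1}^m p(f_\alpha, L_i, z_i).
\]
The only point requiring any care is this interchange, but absolute convergence of the Gaussian theta series makes it routine, so I expect no genuine obstacle.

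Finally, the statement about minimizers follows immediately from the positivity of each factor. Every $z \in \R^n$ decomposes as $z = z_1 + \ldots + z_m$, and since each factor satisfies $p(f_\alpha, L_i, z_i) \geq p(f_\alpha, L_i, c_i) > 0$, multiplying these inequalities yields $p(f_\alpha, L, z) \geq \prod_{i=1}^m p(f_\alpha, L_i, c_i) = p(f_\alpha, L, c)$. Hence $c = c_1 + \ldots + c_m$ minimizes $z \mapsto p(f_\alpha, L, z)$.
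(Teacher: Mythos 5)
Your proof is correct and takes essentially the same approach as the paper's: orthogonality gives the Pythagorean splitting $\|x-z\|^2 = \sum_i \|x_i - z_i\|^2$, the Gaussian turns this into a product, and the sum over $L \cong L_1 \times \cdots \times L_m$ factors into a product of sums. The paper merely reduces to the case $m = 2$ and leaves the absolute-convergence interchange and the minimizer conclusion implicit, while you handle general $m$ directly and spell out these routine details; this is an inessential difference.
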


\begin{proof}
It is enough to prove the statement when $L = L_1 \perp L_2$. 
Then, for $z = z_1 + z_2$, we have
\begin{align*}
p(f_\alpha, L, z) & = \sum_{\ell \in L}e^{-\pi\alpha \| \ell - z\|^2}
\\ & = \sum_{\ell_1 \in L_1, \ell_2 \in L_2}e^{-\pi\alpha \| \ell_1 + \ell_2 - z_1 - z_2\|^2}
\\ & = \sum_{\ell_1 \in L_1, \ell_2 \in L_2}e^{-\pi\alpha \| \ell_1  - z_1 \|^2}e^{-\pi\alpha \|  \ell_2 - z_2\|^2}
\\ & = \left(\sum_{\ell_1 \in L_1}e^{-\pi\alpha \| \ell_1  - z_1 \|^2}\right)\left(\sum_{\ell_2 \in L_2}e^{-\pi\alpha \|  \ell_2 - z_2\|^2}\right)
\\ & = p(f_\alpha, L_1, z_1)p(f_\alpha, L_2, z_2).
\end{align*}
\end{proof}

\subsection{Designs on inhomogeneous shells of indecomposable root latttices} \label{sec:special:points:lattices}

An integral lattice $L$ is a \emph{root lattice} if and only if 
\begin{equation} \label{eq:root:lattice}
  L = \langle L(2) \rangle_{\Z},
\end{equation}
i.e. if $L$ is generated by its elements of squared norm $2$, which in fact is a root system.
Note that as a direct consequence of the definition all root lattices are even, that is $\|v\|^2 \in 2\Z$ for all $v \in L$.

If $L$ is decomposable, say $L = L_1 \perp \ldots \perp L_m$ with $L_1,\ldots,L_m$ indecomposable, then this is true for the root system
\[
  L(2) = L_1(2) \perp \ldots \perp L_m(2).
\]
It is well known that in this case each root system $L_i(2)$ is indecomposable\footnote{Recall that a root system is called \emph{decomposable} if $R = R_1 \perp R_2$ for root systems $R_1$, $R_2$.} as a root system and is of type $A_n$ ($n \geq 2$), $D_n$ ($n \geq 3$) or $E_n$ ($n \in \lset 6,7,8 \rset$) (c.f. \cite[Theorem 1.2]{Ebeling2013}).

% If $L$ is an indecomposable root lattice with root system $R = L(2)$ the Weyl group of its root system acts irreducibly on $\R^n$ (c.f. \cite[Lemma 1.8]{Ebeling2013}).
For a root lattice $L$ we write $W(L)$ for its \emph{Weyl group}, which is the Weyl group of the root system $R=L(2)$ (c.f. Appendix \ref{sec:Appendix:weyl}).
Of course $W(L)$ is a subgroup of $\ort(L)$.
Similarly we will write $W^a(L)$ for the affine Weyl group of $L$, which is the semi-direct product 
\[
  W^a(L) = W(L) \rtimes L \subseteq \iso(L) = \ort(L) \rtimes L,
\]
where $L$ acts on itself by translations and $\iso(L)$ is the \emph{affine isometry group} of $L$.

A very nice property of these root systems of type $A_n$, $D_n$ and $E_n$ is that they are self-dual, that is 
\[
  A_n = (A_n)^\vee, D_n = (D_n)^\vee, \text{ and }  E_n = (E_n)^\vee.
\]
So, in particular, if $R = L(2)$, then $R = R^{\vee}$ (c.f. Appendix \ref{sec:Appendix:weyl} for reference).

From the above discussion we collect information on \emph{special} points of root lattices, that is points in space which are stabilized by a suitably nice group of affine isometries of the lattice.
It turns out that these are precisely the points of the \emph{dual} of $L$, which, in general, is the lattice
\[
  L^* = \lset x \in \R^n : x \cdot v \in \Z \text{ for all } v \in L \rset.
\]

\begin{lemma} \label{lem:special:points:rootlattice}
  Let $L$ be an $n$-dimensional root lattice with root system $R = L(2)$ and Weyl group $W(L)$.
  For $z \in \R^n$ we have
  \[
    W^a_z(L) \cong W(L) \quad \Longleftrightarrow \quad z \in L^*.
  \]
\end{lemma}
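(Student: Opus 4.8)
The plan is to prove the equivalence $W^a_z(L) \cong W(L) \iff z \in L^*$ by analyzing the stabilizer of a point $z$ in the affine Weyl group $W^a(L) = W(L) \rtimes L$. The key observation is that an element of $W^a(L)$ stabilizing $z$ consists of a pair $(\sigma, t)$ with $\sigma \in W(L)$, $t \in L$, acting by $x \mapsto \sigma(x) + t$, and the stabilizer condition $\sigma(z) + t = z$ ties the translation part $t$ to the linear part $\sigma$ via $t = z - \sigma(z)$. Since $t$ must lie in $L$, this means the stabilizer $W^a_z(L)$ is exactly the set of $\sigma \in W(L)$ for which $z - \sigma(z) \in L$, with the translation determined by $\sigma$. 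I would set up a natural group homomorphism $\varphi \colon W^a_z(L) \to W(L)$ sending $(\sigma, z - \sigma(z)) \mapsto \sigma$; this is injective since the translation is determined by $\sigma$, so $W^a_z(L)$ is isomorphic to the subgroup $\{\sigma \in W(L) : z - \sigma(z) \in L\}$ of $W(L)$.

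**Reducing the equivalence** then amounts to showing that this subgroup equals all of $W(L)$ precisely when $z \in L^*$. The heart of the matter is the claim
\[
z \in L^* \quad \Longleftrightarrow \quad \sigma(z) - z \in L \text{ for all } \sigma \in W(L).
\]
For the direction ($\Leftarrow$), I plan to exploit that $W(L)$ is generated by the reflections $s_v$ in the roots $v \in R = L(2)$, for which one has the explicit formula $s_v(z) = z - \frac{2(z \cdot v)}{v \cdot v} v = z - (z \cdot v)\, v$, using $\|v\|^2 = 2$. Thus $z - s_v(z) = (z \cdot v)\, v$, and since $v \in L$, the condition $z - s_v(z) \in L$ holds for every root $v$ if and only if $(z \cdot v) \in \Z$ for every $v \in R$. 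Because $R = L(2)$ generates $L$ as a $\Z$-module (this is the defining property~\eqref{eq:root:lattice} of a root lattice), having $z \cdot v \in \Z$ for all roots $v$ is equivalent to $z \cdot w \in \Z$ for all $w \in L$, which is precisely the condition $z \in L^*$.

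**The generation argument** is where I expect the only real subtlety. For the forward direction ($\Rightarrow$), assuming $z \in L^*$, I would argue that $z - \sigma(z) \in L$ for every $\sigma \in W(L)$ by writing $\sigma$ as a product of reflections $s_{v_1} \cdots s_{v_k}$ and showing inductively that each partial composition moves $z$ by a lattice vector; here one uses that $z \in L^*$ implies $z \cdot v \in \Z$ for all roots, together with the fact that $W(L)$ preserves $L$ (so intermediate points stay in the appropriate coset structure) and self-duality $R = R^\vee$ so that the reflection formula stays clean. The main obstacle is verifying that the stabilizer subgroup $\{\sigma : z - \sigma(z) \in L\}$ being all of $W(L)$ is genuinely equivalent to $z \in L^*$ and not merely implied in one direction: the nontrivial point is that checking the condition only on the generating reflections $s_v$ suffices, which rests on the telescoping identity $z - \sigma\tau(z) = (z - \sigma(z)) + \sigma(z - \tau(z))$ combined with $\sigma(L) = L$. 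Once this telescoping is in place, both directions of the equivalence follow cleanly from the reflection formula and the defining property~\eqref{eq:root:lattice}.
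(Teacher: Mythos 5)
Your proposal is correct in substance, but it takes a genuinely different route from the paper. The paper's proof is essentially a two-line citation: it identifies the condition $W^a_z(L) \cong W(L)$ with $z$ being a \emph{special point} of the affine Weyl group (\cite[Ch. V, \S 3]{Bourbaki1968}) and then invokes Bourbaki's classification of special points as the weight lattice (\cite[Ch. VI, \S 2, Proposition 3]{Bourbaki1968}), which for the self-dual root systems $R = L(2) = R^\vee$ is exactly $L^*$. You instead give a direct, self-contained argument: the stabilizer projects isomorphically onto the subgroup $H_z = \{\sigma \in W(L) : z - \sigma(z) \in L\}$ of $W(L)$ (the same computation as Lemma~\ref{lem:stabilizer:affine}), the telescoping identity $z - \sigma\tau(z) = (z-\sigma(z)) + \sigma(z - \tau(z))$ together with $\sigma(L) = L$ shows that membership in $H_z$ only needs to be checked on the generating reflections, and the reflection formula $z - s_v(z) = (z \cdot v)\,v$ for norm-$2$ roots translates the condition into integrality of $z \cdot v$, hence into $z \in L^*$ by the generation property~\eqref{eq:root:lattice}. (Your labels for the two directions are swapped relative to the statement of the lemma, but the content covers both.) Your approach buys transparency and independence from the Bourbaki machinery, and it makes visible exactly where self-duality and evenness of $L$ enter; the paper's approach buys brevity and embeds the lemma in the general theory of special points, which also covers non-self-dual root systems, where the answer is the weight lattice of $R^\vee$ rather than $L^*$.

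Two small points need patching before your argument is complete. First, the lemma asserts an \emph{abstract} isomorphism $W^a_z(L) \cong W(L)$, while your reduction works with the \emph{equality} $H_z = W(L)$; to pass from the former to the latter you must use that $W(L)$ is finite, so that a subgroup of $W(L)$ abstractly isomorphic to $W(L)$ has the same cardinality and is therefore all of $W(L)$. Second, your claim that $z - s_v(z) = (z\cdot v)\,v \in L$ for every root $v$ \emph{if and only if} $z \cdot v \in \Z$ for every root $v$ is trivial in only one direction; the other direction needs that roots are primitive vectors of $L$, since for a general $v \in L$ the condition $\lambda v \in L$ does not force $\lambda \in \Z$. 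Here evenness (or just integrality) of root lattices is used: if $(z \cdot v)\,v \in L$, then integrality gives $2(z\cdot v) = \bigl((z\cdot v)v\bigr)\cdot v \in \Z$, and if $z \cdot v$ were a half-integer then $\tfrac{1}{2}v$ would be a lattice vector of square norm $\tfrac{1}{2}$, which is impossible in the even lattice $L$. With these two remarks inserted, your proof is complete and correct.
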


\begin{proof}
  This is based on the more general concept of special points for Weyl groups, on which we collect some information in Appendix \ref{sec:Appendix:weyl} but in general we refer to \cite{Bourbaki1968}.
  
  Explicitly we note that for a point $z$ the isomorphism $W^a_z(L) \cong W(L)$ is equivalent to $z$ being a special point of $W^a(L)$ (\cite[Ch. V, \S 3, Proposition 9]{Bourbaki1968}) and the special points of root lattices are precisely the elements of the weight lattice $L^*$ (\cite[Ch.VI, \S 2, Proposition 3]{Bourbaki1968}).
\end{proof}

It is the unimodularity of $E_8$, i.e. $E_8 = E_8^*$, that keeps us from including it in the framework used in Lemma \ref{lem:special:points:rootlattice}.
In particular we already see that special points for $E_8$ which are not lattice elements will have a stabilizer that is smaller than $W(E_8)$, we will adress this case in Lemma \ref{lem:deephole:stabilizer}.

For the subsequent discussion we need some facts about these lattices, which we collect in Table \ref{tab:root-lattices}.
In particular we emphazise that for indecomposable root lattices not of type $E_8$ all deep holes are elements of the associated dual lattice and therefore special points for the associated affine Weyl group.
To justify the contents of this table we refer to Appendix \ref{sec:Appendix:root-lattices} and more generally to \cite[Chapter 4]{Conway1988a}.

\begin{center}
\begin{table}[!h]
\begin{tabular}{ c  p{9em}  p{14em}  c }
$R$  & $L^* / L$ & special points of $W^a(L)$ & $[\ort(L):W(L)]$\\ \hline 
\rule{0pt}{4mm}%
$A_n$ & cyclic of order $n+1$ & all vertices of a fundamental simplex for $W^a(A_n)$, which are all lattice points and holes. & $2$ \\[6ex]
$D_n$ & $n$ even: \newline cyclic of order $4$ 
\newline  $n$ odd: \newline non-cyclic of order $4$ & all lattice points and holes & $2$ \\[9ex]
$E_6$ & cyclic of order $3$ & all lattice points and holes & $2$ \\[1ex]
$E_7$ & cyclic of order $2$ & all lattice points and holes & $1$\\[1ex]
$E_8$ & trivial & all lattice points & $1$\\
\hline
\end{tabular}
\caption{Data regarding indecomposable root lattices.}
\label{tab:root-lattices}
\end{table}
\end{center}

With this we are ready to discuss the main result of this section.

\begin{theorem}\label{thm:deep-hole-designs}
  Let $L\subseteq \R^n$ be a root lattice and $c \in \R^n$ a deep hole of $L$. Then for $r > 0$ the inhomogeneous shell $L(z,r)$ is either empty or a spherical $2$-design.
\end{theorem}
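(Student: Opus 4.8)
The plan is to reduce the statement to the following assertion: if $c$ is a deep hole of a root lattice $L$, then the stabilizer of $c$ in the affine isometry group $\iso(L)$ acts on each sphere $S^{n-1}(c,r)$ with enough transitivity to force every nonempty inhomogeneous shell $L(c,r)$ to be a spherical $2$-design. The key tool is the standard fact that if a finite group $G$ of orthogonal transformations fixing $c$ acts irreducibly on $\R^n$ \emph{after complexification of the degree-$\leq 2$ harmonic modules}---more precisely, if the only $G$-invariant polynomials of degree $1$ are constants (degree $0$), and the space of $G$-invariant harmonic polynomials of degree $2$ is trivial---then every $G$-orbit on a sphere is automatically a spherical $2$-design. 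Since each shell $L(c,r)-c$ is a union of orbits of the stabilizer group $W^a_c(L)$, it suffices to establish this invariance-theoretic property for that stabilizer.

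Concretely, I would proceed as follows. First, by the decomposability result (Lemma~\ref{lem:decomposable}) combined with the factorization of deep holes and designs through orthogonal summands, I would reduce to the case of an \emph{indecomposable} root lattice, of type $A_n$, $D_n$, or $E_n$. Next, by Lemma~\ref{lem:special:points:rootlattice} together with the data in Table~\ref{tab:root-lattices}, every deep hole $c$ of such a lattice (other than $E_8$, which has no non-lattice special points and whose deep holes must be handled via Lemma~\ref{lem:deephole:stabilizer}) is a special point, so its stabilizer $W^a_c(L)$ is isomorphic to the full Weyl group $W(L)$. The crucial step is then representation-theoretic: the Weyl group $W(L)$, acting in its reflection representation, is known to act \emph{irreducibly} on $\R^n$ for each irreducible root system, which immediately kills the degree-$1$ invariants (a $1$-design condition, giving $\sum_{x}(x-c)=0$). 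For the degree-$2$ condition I would invoke the fact that the ring of $W(L)$-invariant polynomials is a free polynomial algebra (Chevalley's theorem) whose generators have degrees equal to the exponents-plus-one of the Weyl group; for every irreducible root system the smallest such degree exceeding $1$ is at least $2$, and in fact the space of degree-$2$ harmonic invariants is trivial precisely because $2$ is \emph{not} a fundamental degree for any irreducible Weyl group except in degenerate cases, forcing the harmonic projection of $\|x-c\|^2$ to be the only invariant quadratic. This gives the $2$-design property.

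The main obstacle I expect is the exceptional case $L = E_8$, where the deep holes are not special points and the stabilizer is strictly smaller than $W(E_8)$; here I cannot simply cite Lemma~\ref{lem:special:points:rootlattice}, and must instead identify the actual stabilizer group (via Lemma~\ref{lem:deephole:stabilizer}) and verify directly that this smaller group still has no invariant harmonic polynomials of degrees $1$ and $2$. The deep hole of $E_8$ is well understood---its stabilizer is known to contain a Weyl group of type $D_8$ (or a related reflection group) large enough to enforce the design strength---so the argument reduces to checking that this explicit reflection subgroup acts without invariant linear or harmonic-quadratic forms. A secondary, more bookkeeping-type obstacle is keeping track of the distinction between the lattice $L$ and its dual $L^*$: for root lattices of type $A_n$, $D_n$, $E_6$, $E_7$ the deep holes lie in $L^*$ and the special-point machinery applies cleanly, whereas one must separately confirm that the relevant Weyl group indeed arises as a subgroup of the stabilizer and that the self-duality $R=R^\vee$ recorded in the text is what guarantees the stabilizer is large enough. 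Once the invariance-theoretic fact---that a finite reflection group with irreducible reflection representation and no fundamental degree equal to $2$ forces all its sphere orbits to be $2$-designs---is isolated as a lemma, the theorem follows uniformly across all cases.
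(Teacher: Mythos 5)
Your proposal follows the paper's proof essentially step for step: the same reduction to indecomposable root lattices, the same key criterion (the Goethals--Seidel equivalence between ``every $G$-orbit is a spherical $t$-design'' and ``$G$ has no invariant harmonic polynomials of degrees $1,\dots,t$''), the same identification of a Weyl group inside the stabilizer via special points (Lemma~\ref{lem:special:points:rootlattice}) for all types other than $E_8$, and the same treatment of $E_8$ via the subgroup $W(D_8)\subseteq \iso(E_8)_c$ of Lemma~\ref{lem:deephole:stabilizer}. The only genuine difference is how you certify the absence of harmonic invariants in degrees $1$ and $2$: the paper cites \cite[Theorem 4.6 and Table 1]{Goethals1981}, whereas you argue from irreducibility of the reflection representation together with Chevalley's theorem. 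That self-contained route is perfectly viable, but as written it contains a false statement: you claim the degree-$2$ harmonic invariants vanish ``precisely because $2$ is \emph{not} a fundamental degree for any irreducible Weyl group.'' In fact $2$ is always the \emph{smallest} fundamental degree of an irreducible Weyl group, the corresponding basic invariant being $\|x\|^2$. What makes the argument work is that $2$ occurs with multiplicity exactly one among the degrees---equivalently, that the reflection representation is irreducible, so by Schur's lemma the invariant quadratic forms are spanned by $\|x\|^2$ alone---and $\|x\|^2$ has zero harmonic projection. (Your phrase ``forcing the harmonic projection of $\|x-c\|^2$ to be the only invariant quadratic'' is confused for the same reason: that projection vanishes.) Once corrected, your degrees-$1$-and-$2$ claim holds for every irreducible finite reflection group, in particular for $W(D_8)$, so the $E_8$ case needs no separate invariant-theoretic check.

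A second caveat, which you share with the paper's own write-up but state more explicitly (and incorrectly): designs do \emph{not} ``factor through orthogonal summands.'' If $X_i$ is a $2$-design on a sphere of radius $r_i$ in $\R^{n_i}$, the product $X_1\times X_2$ is always a $1$-design but is a $2$-design only when $r_1^2/n_1 = r_2^2/n_2$, since the second-moment matrix is block diagonal with blocks proportional to $r_i^2/n_i$. For instance, the first shell around a deep hole of $A_1\perp A_2$ has second-moment matrix proportional to $\mathrm{diag}(3,2,2)$, so it is not a $2$-design, and the theorem in the form ``root lattice'' rather than ``indecomposable root lattice'' actually fails. This does not harm the paper's main results, because decomposable lattices are handled there by the multiplicative factorization of the potential (Lemma~\ref{lem:decomposable}) rather than by Theorem~\ref{thm:deep-hole-designs}; but in your proposal the reduction step should be removed or restricted, not justified by a factorization property of designs that does not hold.
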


\begin{proof}
  We can assume that $L$ is an indecomposable root lattice, as every decomposable root lattice splits orthogonally into an orthogonal sum of indecomposable root lattices. 
    
  The main tool we use is Theorem 3.12 in \cite{Goethals1981}, which states that for any finite subgroup $G$ of the orthogonal group $\ort(\R^n)$ the following conditions are equivalent
  \begin{enumerate}
    \item every $G$-orbit is a spherical $t$-design;
    \item \label{enum:noharm} there are no $G$-invariant harmonic polynomials of degrees $1,2,\ldots,t$.
  \end{enumerate} 

  For each irreducible root lattice we find a suitable group $G$ which satisfies \eqref{enum:noharm} with $t=2$ above and has the property that each shell $L(c,r)$ is a union of orbits of $G$. 
  Then each orbit is a spherical $2$-design and so this true for $L(c,r)$.

  Let $c$ be a deep hole of $L$ and $r$ such that $L(c,r)$ is non-empty.
  Let $\iso(L)_c$ be the stabilizer of $c$ in $\iso(L)$.
  By Lemma \ref{lem:special:points:rootlattice} we have that if $L \not\cong E_8$, then the stabilizer of a deep hole $c$ of $L$ in $W^a(L) \subseteq \iso(L)$ contains\footnote{In fact the stabilizer $\iso(L)_c$ can be computed explicitly, we will do so in Lemma \ref{lem:deephole:stabilizer}, however, this is not necessary for the argument here.} (an isomorphic copy $G$ of) the Weyl group $W(L)$ of $L$.

  For $L \cong E_8$ we find that there is $G \subseteq \iso(L)_c$, with $G \cong W(D_8)$, this can be checked by explicit computation.
  In fact $\iso(L)_c \cong W(D_8)$, we prove this in Lemma \ref{lem:deephole:stabilizer} \eqref{lem:deephole:stabilizer:E}.

  So in all cases $G$ can be chosen to be a Weyl group of type $A$, $D$, or $E$ of full dimension with respect to $L$.
  Note that $G$ is a group of isometries fixing $c$ and therefore also fixes $L(c,r)$ set-wise, so $L(c,r)$ is indeed a union of orbits of $G$.

  We can apply \cite[Theorem 3.12]{Goethals1981} to these groups, since in all cases such a Weyl group has no harmonic invariants of degree $1$ or $2$.
  This follows from \cite[Theorem 4.6]{Goethals1981}, the succeeding discussion and the data in Table 1 of \cite{Goethals1981}.
\end{proof}

For all root lattices which are not of type $E_8$ the proof of the above Theorem extends naturally to all special points in Table \ref{tab:root-lattices}.

\begin{corollary}
  Let $L \not \cong E_8$ be an irreducible root lattice and let $c$ be a hole of $L$. 
  Then $L(c,r)$ is either empty or a spherical $2$-design.
\end{corollary}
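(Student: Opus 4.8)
The plan is to reduce the corollary to Theorem~\ref{thm:deep-hole-designs} by showing that the argument there, which was stated for deep holes, actually only used a single structural fact: that the stabilizer $\iso(L)_c$ contains a full-dimensional Weyl group of type $A$, $D$, or $E$ having no harmonic invariants of degrees $1$ and $2$. Since the statement now concerns an \emph{arbitrary} hole $c$ of $L$ (not necessarily a deep one), the first step is to identify what the stabilizer of such a hole looks like. Here I would invoke Table~\ref{tab:root-lattices}: for every irreducible root lattice $L \not\cong E_8$, \emph{all} holes (deep or shallow) are special points of the affine Weyl group $W^a(L)$, and by the entries of that table the special points coincide with the vertices of a fundamental domain, which are precisely the lattice points together with all holes. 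Thus every hole lies in $L^*$.

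\medskip

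With that in hand, the second step is to apply Lemma~\ref{lem:special:points:rootlattice}, which asserts exactly that for $z \in L^*$ the stabilizer $W^a_z(L)$ is isomorphic to the full Weyl group $W(L)$. Because we have excluded $E_8$, every hole $c$ falls under this lemma, so $\iso(L)_c$ contains an isomorphic copy $G \cong W(L)$ of the Weyl group, acting as a group of isometries fixing $c$. Consequently, for any $r > 0$, the shell $L(c,r)$ is stable set-wise under $G$ and hence decomposes as a disjoint union of $G$-orbits, each lying on the sphere $S^{n-1}(c,r)$.

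\medskip

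The third and final step is to reuse the design criterion of Goethals and Seidel \cite[Theorem 3.12]{Goethals1981} verbatim: since $G$ is a Weyl group of type $A$, $D$, or $E$ of full dimension with respect to $L$, it has no harmonic invariants of degree $1$ or $2$, so every $G$-orbit on any sphere centered at $c$ is a spherical $2$-design. A finite union of spherical $2$-designs on the same sphere is again a spherical $2$-design (averaging a degree-$\leq 2$ polynomial over the union equals the common orbit-average, which equals the spherical average), and therefore $L(c,r)$ is a spherical $2$-design whenever it is non-empty. This completes the argument.

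\medskip

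I do not expect any serious obstacle: the corollary is essentially a repackaging of the proof of Theorem~\ref{thm:deep-hole-designs}, with the only genuine input being the observation that the restriction ``$c$ is a \emph{deep} hole'' was never used beyond guaranteeing $c \in L^*$, a property that Table~\ref{tab:root-lattices} and Lemma~\ref{lem:special:points:rootlattice} supply for all holes once $E_8$ is excluded. The one point that deserves a careful sentence, rather than a real difficulty, is the exclusion of $E_8$: there the shallow holes need not be special points with full stabilizer $W(E_8)$ (indeed even the deep holes of $E_8$ required the separate group $W(D_8)$ in Theorem~\ref{thm:deep-hole-designs}), so the clean ``$z \in L^* = E_8$'' characterization of Lemma~\ref{lem:special:points:rootlattice} breaks down and the uniform orbit argument no longer applies; this is exactly why the hypothesis $L \not\cong E_8$ is imposed.
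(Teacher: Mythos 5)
Your proposal is correct and follows essentially the same route as the paper, which proves this corollary by observing that the proof of Theorem~\ref{thm:deep-hole-designs} extends verbatim to all special points: for $L \not\cong E_8$ every hole lies in $L^*$ (Table~\ref{tab:root-lattices}), so Lemma~\ref{lem:special:points:rootlattice} places a copy of $W(L)$ in the stabilizer, and the Goethals--Seidel criterion then makes every shell a union of spherical $2$-designs. Your explicit remarks on why $E_8$ must be excluded and why a disjoint union of $2$-designs on a common sphere is again a $2$-design are accurate and fill in details the paper leaves implicit.
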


\subsection{The full stabilizer group for special points and indecomposable root lattices}  \label{sec:special:points:stabilizer}

The proof of Theorem \ref{thm:deep-hole-designs} relies only on the fact that for root lattices the full stabilizer subgroup of a deep hole contains a Weyl group.
We proved the existence of such a subgroup in Lemma \ref{lem:special:points:rootlattice} for all cases except $E_8$, this case we will handle now.
Along with that we compute the full stabilizer subgroups of deep holes in the affine isometry group of indecomposable root lattices.
This is achieved in Lemma \ref{lem:deephole:stabilizer}.
While this is not necessary for the proof of Theorem \ref{thm:deep-hole-designs} we are not aware of a reference to this problem in the literature, and it seems to be an obvious question how far the full stabilizer of a deep hole in a (indecomposable) root lattice is away from the subgroup identified in Lemma \ref{lem:special:points:rootlattice}.
\medskip

Let $\iso(L)$ be the group of affine isometries of $L$. 
Every element in $\iso(L)$ can be written as the composition of an element $f\in \ort(L)$ by a translation $t_u$ by a vector of $u\in L$.
Recall that for $c\in \R^n$
\[
  \iso(L)_c = \lset \phi \in \iso(L) : \phi(c) = c \rset \subseteq \iso(L)
\]
be the stabilizer of $c$ in $\iso(L)$. 

Firstly, we determine how the translational part and the linear part of an affine isometry $\phi$ are related if $\phi$ is in the stabilizer $\iso(L)_c$ of a point $c$.

\begin{lemma} \label{lem:stabilizer:affine}
Let $\phi = t_u \circ f \in \iso(L)$. Then $u \in L$ and $\phi \in \iso(L)_c$ if and only if $u = c-f(c)$.
\end{lemma}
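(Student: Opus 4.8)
The plan is to unwind the definitions of the affine isometry group and of the stabilizer, after which the statement reduces to a single elementary computation. Recall that we are working inside $\iso(L) = \ort(L) \rtimes L$. By the semidirect-product structure, every element $\phi \in \iso(L)$ is written uniquely as $\phi = t_u \circ f$ with linear part $f \in \ort(L)$ and translation part $u \in L$, where the composition is read right-to-left, so that $(t_u \circ f)(x) = f(x) + u$ for all $x$. This immediately accounts for the assertion $u \in L$: equivalently, one may simply observe that $u = \phi(0) = f(0) + u$ and that $\phi$ maps the lattice to itself with $0 \in L$, forcing $u = \phi(0) \in L$.

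With the convention fixed, I would evaluate $\phi$ at the point $c$. We have $\phi(c) = f(c) + u$, and by definition $\phi \in \iso(L)_c$ means precisely $\phi(c) = c$. Hence $\phi \in \iso(L)_c$ holds if and only if $f(c) + u = c$, which rearranges to the claimed relation $u = c - f(c)$. Both implications of the equivalence are contained in this single equivalence of equalities, so no separate ``only if'' and ``if'' arguments are needed.

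The main ``obstacle'' here is essentially bookkeeping rather than mathematics: the only points demanding care are the composition convention for $t_u \circ f$ (so that the translation is applied after the linear map) and the fact that the translation vector of an element of $\iso(L)$ automatically lies in $L$, which I would justify either from the semidirect-product definition or from $u = \phi(0)$ as above. Once these conventions are pinned down, the proof is immediate.
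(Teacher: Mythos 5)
Your proof is correct and follows essentially the same route as the paper's: the translation part lies in $L$ because $u = \phi(0) \in L$, and evaluating $\phi(c) = f(c) + u$ turns the stabilizer condition $\phi(c)=c$ into the equivalence $u = c - f(c)$. Your observation that both implications collapse into a single equivalence of equalities is just a slight streamlining of the paper's two-direction presentation.
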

\begin{proof}
The first assertion is clear since
\[
  u = \phi(0) \in L
\]
by $\phi \in \iso(L)$.
If $c = \phi(c)$ we find
\[
c = \phi(c) = f(c) + u \Rightarrow u = c - f(c).
\]
If on the contrary $u = c - f(c)$ then
\[
  \phi(c) = f(c) + c - f(c) = c.
\]
\end{proof}

Before determining $\iso(L)_c$, where $c$ is a deep hole of an irreducible root lattice $L$, we quickly recall the structure of the groups $W(L)$ and $\ort(L)$ for the $A_n$ and $D_n$ series specifically.
Note that $D_3 \cong A_3$, we therefore consider the $D_n$ series only for $n \geq 4$.
In the case of the $A_n$ series we have $W(A_n) \cong S_{n+1}$, where $S_m$ is the permutation group on $m$ elements, and $\ort(A_n) \cong \lset \pm \id \rset \times S_{n+1}$ for the full orthogonal group.
In the case of the $D_n$ series we have that $W(D_n)$ is generated by all permutations on the $n$ coordinates, together with an even number of sign changes.
Abstractly we can write $W(D_n) \cong \lset \pm 1 \rset^n_+ \rtimes S_n \cong \lset \pm 1 \rset^{n-1} \rtimes S_n$, where $\lset \pm 1 \rset^n_+$ operates by an even number of sign changes on the coordinates. 
The full orthogonal group is generated by the Weyl group and one additional symmetry and (abstractly) is a semi-direct or wreath product $\ort(D_n) \cong \lset \pm 1 \rset^{n} \rtimes S_{n+1} \cong \lset \pm 1 \rset \wr S_n$.
For the explicit description of $D_n$ as in Appendix \ref{sec:Appendix:root-lattices}, the additional symmetry separating $\ort(D_n)$ and $W(D_n)$ can be choosen to be the sign change of the last coordinate.
For lattices of the $E_n$-series we have that $\ort(E_7) = W(E_7)$, while $\ort(E_6) = \lset \pm \id \rset \times W(E_6)$.

We also quickly collect some information on the first shell $L(c,\mu(L))$ around a deep hole $C$ in an indecomposable root lattice $L$, in particular for lattices of type $A_n$ or $D_n$.
Recall that the points $L(c,\mu(L))$ are the vertices of the Delaunay polytope around $c$.

For $A_n$ a typical deep hole is given by the vector $c = [\lfloor (n+1)/2 \rfloor]$ (c.f. Appendix \ref{sec:Appendix:root-lattices}) and the covering radius is $\mu(A_n) = 1/2$ if $n$ is even and $\mu(A_n) = ab/(n+1)$ if $n$ is odd, where $a = \lfloor (n+1)/2\rfloor$ and $b = \lceil (n+1)/2\rceil$.
For $n=2$ we have the hexagonal lattice $A_2$ and the first shell forms the set of vertices of a regular simplex.
For $n$ odd the first shell corresponds to the vertices of a centrally symmetric polytope, while this is wrong for $n$ even. 
To be more precise the vertices of the associated Delaunay polytope are given by the $\binom{n+1}{n+1/2}$ permutations of the vector
\[
  ( \underbrace{-\tfrac{1}{2},\cdots,-\tfrac{1}{2}}_{\tfrac{n+1}{2} \text{ times}},\underbrace{\tfrac{1}{2},\cdots,\tfrac{1}{2}}_{\tfrac{n+1}{2} \text{ times}}) \in A_n
\]
if $n$ is odd, and the $\binom{n+1}{a}$ permutations of the vector
\[
  ( \underbrace{-\tfrac{b}{n+1},\cdots,-\tfrac{b}{n+1}}_{a \text{ times}},\underbrace{\tfrac{a}{n+1},\cdots,\tfrac{a}{n+1}}_{b \text{ times}}) \in A_n
\]
if $n$ is even.
We refer to \cite[Section 4]{Conway1991} for more details.

For $D_n$, we can describe the Delaunay polytopes around all holes in terms of the representatives $[1],[2],[3]$ as given in Appendix \ref{sec:Appendix:root-lattices}.
The Delaunay polytopes around $[1]$ and $[3]$ are half-cubes (also known as demi-cubes or parity-polytopes) which are centrally symmetric if and only if $n$ is even.
For $[2]$ the Delaunay polytope is a cross-polytope, which is centrally symmetric for arbitary $n$.
Now for $n=3$ there is one type of deep hole, represented by $[2]$, for which the Delaunay polytope is a cross-polytope.
For $n=4$ there are three types of deep holes, represented by $[1],[2],[3]$ and in this case the half-cube and cross-polytope coincide. 
For $n \geq 5$ there are two types of deep holes, represented by $[1]$ and $[3]$ and in these cases the Delaunay polytopes are half-cubes.

For the $E_8$ lattice the Delaunay polytope around a deep hole is a cross polytope.
The Delaunay polytope at a deep hole of $E_6$ is a polytope with $27$ vertices, known as Schl\"afli polytope $2_{21}$. 
The Delaunay polytope at a deep hole of  $E_7$ is a centrally symmetric polytope with $56$ vertices, known as Hesse polytope $3_{21}$.
A justification of the above and more details on the geometric descriptions of these polytopes can be found in \cite{Conway1991}.

\begin{lemma} \label{lem:deephole:stabilizer}
  Let $L$ be an indecomposable root lattice and let $c$ be a deep hole of $L$.
  \begin{enumerate}
    \item \label{lem:deephole:stabilizer:A} If $L$ is of type $A_n$ then,
    \[ \iso(L)_c \cong
      \begin{cases}
        W(A_n) &, \text{ if $n$ is even,} \\
        \ort(A_n) &, \text{ if $n$ is odd.}
      \end{cases}
    \]
    \item \label{lem:deephole:stabilizer:D} If $L$ is of type $D_n$ then,
    \[ \iso(L)_c \cong
      \begin{cases}
        \ort'(D_4) &, \text{ if $n = 4$,} \\
        W(D_n) &, \text{ if $n \geq 5$.}
      \end{cases}
    \]
    Here $\ort'(D_4)$ refers to a subgroup of index $4$ in $\ort(D_4)$ which contains $W(D_4)$ as a subgroup of index $2$ (c.f. the section on $D_n$ in Appendix \ref{sec:Appendix:root-lattices}).
    \item \label{lem:deephole:stabilizer:E} If $L$ is of type $E_n$ then,
    \[ \iso(L)_c \cong
      \begin{cases}
        W(E_n) &, \text{ if $n = 6,7$,} \\
        W(D_8) &, \text{ if $n = 8$.}
      \end{cases}
    \]
    Note that $\ort(E_7) \cong W(E_7)$.
  \end{enumerate}
\end{lemma}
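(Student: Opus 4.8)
The plan is to reduce everything to a computation of \emph{linear parts}. By Lemma~\ref{lem:stabilizer:affine} the map $\phi = t_u \circ f \mapsto f$ that sends an affine isometry to its linear part restricts to an injective homomorphism
\[
\iso(L)_c \hookrightarrow \ort(L), \qquad \text{with image} \quad H_c = \lset f \in \ort(L) : f(c) - c \in L \rset,
\]
since $\phi \in \iso(L)_c$ forces $u = c - f(c) \in L$ and, conversely, any $f$ with $c-f(c)\in L$ yields a stabilizing isometry. Thus it is enough to identify the subgroup $H_c$ of $\ort(L)$ in each case.

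For all types except $E_8$ we have $c \in L^*$ by Table~\ref{tab:root-lattices}. For a simply-laced root lattice every reflection $s_\alpha$, $\alpha \in R = L(2)$, satisfies $s_\alpha(x) = x - (x\cdot\alpha)\alpha \equiv x \pmod{L}$ for all $x \in L^*$, because $x \cdot \alpha \in \Z$ and $\alpha \in L$; hence $W(L)$ acts trivially on the discriminant group $L^*/L$ and $W(L) \subseteq H_c$ (this also reproves the inclusion in Lemma~\ref{lem:special:points:rootlattice}). Consequently $H_c$ is a union of $W(L)$-cosets and, letting $\ort(L)/W(L)$ act on $L^*/L$, we get $H_c / W(L) = \mathrm{Stab}_{\ort(L)/W(L)}([c])$, where $[c]$ is the class of the deep hole. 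I would then read off this stabilizer from the structure of $\ort(L)/W(L)$ recalled before the statement together with the data of Table~\ref{tab:root-lattices}: for $A_n$ the nontrivial coset is represented by $-\id$ acting as negation on $L^*/L \cong \Z/(n+1)$, and the class $\lfloor(n+1)/2\rfloor$ of the deep hole is negation-invariant precisely when $n$ is odd, giving $\ort(A_n)$ resp.\ $W(A_n)$; for $D_n$ with $n\geq 5$ the nontrivial coset is the sign change $\tau$ of the last coordinate, which swaps the two spinor classes $[1],[3]$ and fixes $[0],[2]$, so $\tau\notin H_c$ and $H_c = W(D_n)$; for $E_6$ the coset $-\id$ negates $L^*/L\cong\Z/3$, which moves the generator $[c]$, so $H_c = W(E_6)$; for $E_7$ one has $\ort(E_7)=W(E_7)$ with nothing further to do; and for $D_4$ the exceptional triality automorphisms make $\ort(D_4)/W(D_4)\cong S_3$ act as the full permutation group of the three nonzero classes $[1],[2],[3]$ of $L^*/L\cong\Z/2\times\Z/2$, so the stabilizer of any one deep-hole class is the order-two subgroup whose preimage is exactly the group $\ort'(D_4)$ of the appendix.

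For $E_8$ the lattice is unimodular, $E_8 = E_8^*$, so a deep hole $c$ is not a special point and the framework above does not apply; here I would argue directly from the Delaunay polytope. Every $\phi \in \iso(E_8)_c$ fixes $c$, hence permutes the $16$ nearest lattice points, which form a cross-polytope whose difference vectors are $\pm g_1,\dots,\pm g_8$ (pairwise orthogonal, of length $\mu(E_8)$). This gives a homomorphism $\Phi \colon \iso(E_8)_c \to \ort(D_8) = \lset \pm 1\rset \wr S_8$, the signed permutations of the $g_i$. It is injective, because an element of $\ker\Phi$ fixes $c$ and each $g_i$, hence fixes a spanning set and is the identity. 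The substance is to show $\mathrm{im}\,\Phi = W(D_8)$: one realizes all even signed permutations through the reflections in the roots $g_i \pm g_j$ of the copy of $D_8$ sitting in $E_8 - c$ (these generate $W(D_8)$), and one checks that a single sign change $g_1 \mapsto -g_1$ fails to preserve $E_8$, ruling out the odd signed permutations. This is the explicit verification invoked in the proof of Theorem~\ref{thm:deep-hole-designs}, and yields $\iso(E_8)_c \cong W(D_8)$.

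The cases with $c \in L^*$ are essentially bookkeeping of the action on the discriminant form, so the two genuine difficulties are, first, the $D_4$ triality case, where one must correctly incorporate the extra diagram automorphisms and pin down the intermediate group $\ort'(D_4)$ rather than the naive signed-permutation group, and second, the $E_8$ case, where identifying $\mathrm{im}\,\Phi$ with $W(D_8)$---in particular excluding the odd sign changes---requires the explicit coordinate description of $E_8$ and its deep hole and cannot be obtained by a purely formal discriminant argument.
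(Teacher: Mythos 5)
Your proposal is correct, and on all cases except $E_8$ it takes a genuinely different route from the paper. Both proofs start from the same reduction: by Lemma~\ref{lem:stabilizer:affine} the stabilizer $\iso(L)_c$ is identified with the group of linear parts $\lset f \in \ort(L) : f(c)-c \in L\rset$ (the paper realizes this by conjugating with $t_{-c}$, you by projecting onto linear parts; these are the same map). From there the paper argues case by case with the geometry of the Delaunay polytope (central symmetry or its failure for $A_n$, the two deep-hole classes for $D_n$ with $n\geq 5$) and delegates $D_4$ and $E_6$ to explicit/MAGMA computations, whereas you run a uniform discriminant-group argument: since $s_\alpha(x)-x = -(x\cdot\alpha)\alpha \in L$ for $x\in L^*$ and $\alpha \in L(2)$, the Weyl group acts trivially on $L^*/L$, so $\iso(L)_c$ is the preimage of the stabilizer of the glue class $[c]$ under the induced action of $\ort(L)/W(L)$ on $L^*/L$. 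This buys a computer-free and conceptually cleaner treatment of $D_4$ (triality acting as the full $S_3$ on the three nonzero classes) and of $E_6$, at the price of invoking the standard structure of $\ort(L)/W(L)$; the paper's polytope arguments are more geometrically self-contained. For $E_8$ your proof is essentially the paper's, but more careful on one point: the paper asserts that the isometry group of the cross polytope is $W(D_8)$, which is off by an index-$2$ factor (it is the full hyperoctahedral group $\lset\pm 1\rset^{8}\rtimes S_8$), so the paper's inclusion $G' \subseteq W(D_8)$ actually needs your extra step that a single sign change fails to preserve $E_8$.

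Two remarks. First, your $D_4$ argument yields $[\ort(D_4):\ort'(D_4)] = 3$ (an order-$2$ stabilizer inside $S_3$), which contradicts the parenthetical ``index $4$'' in the statement and in Appendix~\ref{sec:Appendix:root-lattices}; since $|\ort(D_4)| = |W(F_4)| = 1152 = 6\,|W(D_4)|$, your index $3$ is the correct value and the paper's parenthetical is erroneous---your identification of the group itself, $\langle W(D_4), \text{sign flip}\rangle$, agrees with the paper's definition of $\ort'(D_4)$, so the substance of the lemma and of your proof is unaffected. Second, in the $E_8$ case, to pass from ``the single sign change $g_1 \mapsto -g_1$ does not preserve $E_8$'' to ``all odd signed permutations are excluded'' you should say explicitly that $\mathrm{im}\,\Phi$ is a subgroup of $\ort(E_8)$ containing $W(D_8)$, so that any odd element $f = \epsilon_1 w$ ($w \in W(D_8)$) in the image would force $\epsilon_1 = f w^{-1} \in \ort(E_8)$, a contradiction; this is a one-line addition, not a gap in the approach.
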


\begin{proof}
  We prove the statements by using explicit data on the indecomposable root lattices and their Weyl and automorphism groups.
  \medskip

  We start with $L \cong E_8$\footnote{This case could be handled by a computer algebra program (such as MAGMA), but since we rely on this characterization of the stabilizer of a deep hole of $E_8$ in the proof of Theorem \ref{thm:deep-hole-designs} we give an explicit proof that does not require the use of a computer.} and use the explicit realization of $E_8$ as in Appendix \ref{sec:Appendix:root-lattices}, given by
  \begin{equation} \label{eq:E8}
    E_8 = D_8^+ = D_8 \cup \tfrac{1}{2}e + D_8.
  \end{equation}
  Let $c$ be a deep hole of $E_8$ for which $0$ is a closest lattice point. Then $\|c\|^2 = 1$ and the associated Delaunay polytope $D$ is a cross polytope.

  If $\phi \in \iso(L)_c$ then $\phi$ permutes the vertices of $D$ and therefore fixes $D$.
  We identify $\iso(L)_c$ with a subgroup $G'$ of $\ort(\R^8)$. 
  The inner automorphism $\Phi: \iso(\R^8) \rightarrow \iso(\R^8); \phi \mapsto \varphi = t_{-c} \circ \phi \circ t_{c}$ induces the isomorphism
  \[
    \iso(L)_c \cong G' = t_{-c} \circ \iso(L)_c \circ t_{c},
  \]
  where $G'$ is a subgroup of $\ort(\R^8)$ as desired. 
  In agreement with Lemma \ref{lem:stabilizer:affine} we explicity obtain
  \begin{equation} \label{eq:Gprime}
    G' = \lset \varphi \in \ort(E_8) : \varphi(c)-c \in E_8 \rset.
  \end{equation}
  We claim that $G' \cong W(D_8)$.

  For this we fix the deep hole $c = e_1$, for which the Delaunay polytope is
  \[
    D = e_1 + \conv\left(\lset \pm e_1, \ldots, \pm e_8 \rset \right) = e_1 + D'.
  \]  
  Let $\phi \in \iso(L)_{e_1}$, so it stabilizes $D$ and its center $e_1$, and so $\varphi = t_{-e_1} \circ \phi \circ t_{e_1}$ stabilizes $D' = -e_1 + D$.
  This is equivalent to
  \[
    \varphi\left(\lset \pm e_1,\ldots, \pm e_8 \rset\right) = \lset \pm e_1,\ldots, \pm e_8 \rset
  \]
  so $\varphi$ stabilizes a standard cross polytope centered at the origin.
  The group of isometries of the latter is precisely the Weyl group of $D_8$, which one can see for example by the description $W(D_8) \cong \lset \pm 1 \rset^8_+ \rtimes S_8$.

  On the contrary assume that $\varphi \in W(D_8)$. 
  Then, in \eqref{eq:E8}, $\varphi$ fixes the sublattice $D_8$ of $E_8$ and operates on $e$ by flipping an even number of signs.
  This gives $\varphi(e)-e \in \lset 0,2 \rset^8$, where an even number of $2$s occur, so $\varphi(e)-e \in 2D_8$.
  With this and the decomposition \eqref{eq:E8} we see that $\varphi$ maps $E_8$ to $E_8$, so $\varphi \in \ort(E_8)$.
  Moreover   
  \[
    \varphi(e_1) = \pm e_i \quad \Rightarrow \quad \varphi(e_1) - e_1 = \pm e_i -e_1 \in E_8,  
  \]
  so $\varphi \in G'$ as defined by \eqref{eq:Gprime}.
  This finishes the proof for $L \cong E_8$.
  \medskip

  We recall that if $c$ is a deep hole of an irreducible root lattice $L$ of type other than $E_8$, then $c \in L^*$.
  Furthermore, we recall that in these cases there is an isomorphic copy of $W(L)$ contained in the stabilizer of $c$ in $W^a(L) \subseteq \iso(L)$, i.e. $W(L) \hookrightarrow \iso(L)_c$.  
  Indeed this immediately follows from Lemma \ref{lem:special:points:rootlattice}, which asserts that the stabilizer of $c$ in $W^a$ is isomorphic to $W(L)$ as in the proof of Theorem \ref{thm:deep-hole-designs}.   

  For $L \cong A_n$ we again use the inner automorphism $\Phi: \iso(\R^n) \rightarrow \iso(\R^n); \phi \mapsto \varphi = t_{-c} \circ \phi \circ t_{c}$ to obtain an isomorphism
  \[
    \iso(L)_c \cong G' = t_{-c} \circ \iso(L)_c \circ t_{c},
  \]
  so we can work with a group $G'$ of linear isometries, with $W(L) \subseteq G'$.
  Let $D = c + D'$ be the Delaunay polytope of $c$. 
  So if $\phi$ stabilizes $D$, $\Phi(\phi)$ stabilizes $D'$ and vice versa.
  
  We recall that $\ort(A_n) = \lset \pm \id \rset \times W(A_n)$.
  If $n$ is even, then $D'$ is not centrally symmetric and so $G'$ cannot contain $-\id$, therefore $\iso(L)_c \cong G' = W(A_n)$.
  If $n$ is odd, then $D'$ is centrally symmetric and one can check that indeed $-\id \in G'$, so $\iso(L)_c \cong G' = \ort(A_n)$. 

  For $L \cong D_n$ we distinguish between $n = 4$ and $n \geq 5$.
  The case $n=4$ can be handled by direct computations using the data in Appendix \ref{sec:Appendix:root-lattices} or using a computer algebra program such as MAGMA, we do not include the details since the latter is easily done\footnote{For the claim about the structure of $\ort'(D_4)$ we refer to the discussion in Appendix \ref{sec:Appendix:root-lattices}.}.

  The case $n\geq 5$:
  Here $D_n$ has two types of deep holes which are inequivalent under translations by lattice elements of $D_n$.
  Explicitly we can write
  \[
    D_n^* = \left([0] + D_n\right) \cup \left([1] + D_n\right) \cup \left([2] + D_n\right) \cup \left([3] + D_n\right),
  \]
  where $[1],[2],[3]$ are holes of $D_n$ given by
  \begin{align*}
    [0] &= (0,0,\ldots,0) \text{ of squared norm } 0\\
    [1] &= (\tfrac{1}{2},\tfrac{1}{2},\ldots,\tfrac{1}{2}) \text{ of squared norm } n/4\\
    [2] &= (0,0,\ldots,1) \text{ of squared norm } 1\\
    [3] &= (\tfrac{1}{2},\tfrac{1}{2},\ldots,-\tfrac{1}{2}) \text{ of squared norm } n/4.
  \end{align*}
  Note that for $n \geq 5$ the two types of deep holes are $[1]$ and $[3]$.
  Now $\ort(D_n)$ is generated by $W(D_n)$ and an isometry interchanging $[1]$ and $[3]$, but this additional isometry cannot be contained in $G'$ and so $G' = W(D_n)$.
  So $\iso(L)_c \cong G' = W(D_n)$.

  For $E_7$ we note that $\ort(L) = W(L)$ and $E_6$ can be handled by explicit computations or by using a computer algebra program, such as Magma, we omit the details.
\end{proof}

It might be of interest that a partial reverse holds. 
If $z$ is a point, for which the stabilizer $\iso(L)_z$ of $z$ in $\iso(L)$ contains the Weyl group of the root system of $L$, then $z$ is either a hole of $L$ (if $L \not \cong E_8$) or an element of $L$.

\begin{corollary}
  For an irreducible root lattice $L \not \cong E_8$, we have
    \[
      W(L) \hookrightarrow \iso(L)_z \quad \Longleftrightarrow \quad \text{$z$ is a hole or an element of $L$.}
    \]
\end{corollary}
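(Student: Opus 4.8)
The plan is to rephrase both sides of the equivalence in terms of the set $L^*$ of special points. For an irreducible root lattice $L \not\cong E_8$, the discussion around Table~\ref{tab:root-lattices} together with Lemma~\ref{lem:special:points:rootlattice} shows that the special points of the affine Weyl group $W^a(L)$ are exactly the elements of $L^*$, and that these are precisely the holes and the lattice points of $L$. Hence it suffices to establish
\[
W(L) \hookrightarrow \iso(L)_z \quad \Longleftrightarrow \quad z \in L^*.
\]
The implication ``$\Leftarrow$'' is then immediate: if $z \in L^*$ then $z$ is special, so Lemma~\ref{lem:special:points:rootlattice} gives $W^a_z(L) \cong W(L)$, and since $W^a(L) \subseteq \iso(L)$ we have $W^a_z(L) \subseteq \iso(L)_z$, which yields the embedding $W(L) \hookrightarrow \iso(L)_z$.

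For the implication ``$\Rightarrow$'' I would argue purely by group orders, so that no hypothesis on \emph{how} $W(L)$ sits inside $\iso(L)_z$ is needed. Set $m = [\ort(L):W(L)]$. Since $W^a(L) = W(L) \rtimes L$ sits inside $\iso(L) = \ort(L) \rtimes L$ with quotient $\ort(L)/W(L)$, we have $[\iso(L):W^a(L)] = m$; intersecting with the point stabilizer gives $\iso(L)_z \cap W^a(L) = W^a_z(L)$ and hence $[\iso(L)_z : W^a_z(L)] \le m$, so that $|\iso(L)_z| \le m\,|W^a_z(L)|$. The stabilizer $W^a_z(L)$ of a point in an affine Weyl group is generated by the reflections fixing $z$, hence is a finite parabolic subgroup, and by Lemma~\ref{lem:special:points:rootlattice} it is isomorphic to $W(L)$ exactly when $z$ is special. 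Thus, if $z$ were \emph{not} special, the embedding would force $|W(L)| \le |\iso(L)_z| \le m\,|W^a_z(L)|$, i.e.\ a non-special point stabilizer of order at least $|W(L)|/m$. I would exclude this by inspecting the non-special vertices of the fundamental alcove, whose stabilizers are the parabolic subgroups obtained by deleting a single node from the extended Dynkin diagram (points on positive-dimensional faces have smaller stabilizers, so vertices give the extremal case).

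The heart of the forward direction is therefore a short list of elementary numerical comparisons, one per type. For $A_n$ every alcove vertex is already special, so a non-special point lies on a face of rank at most $n-1$, with stabilizer of order at most $n! < \tfrac{1}{2}(n+1)! = |W(A_n)|/m$. For $D_n$ with $n\ge 5$ the largest non-special vertex stabilizer is of type $A_1 \times A_1 \times D_{n-2}$, of order $2^{n-1}(n-2)! < 2^{n-2}n! = |W(D_n)|/2$. For $E_6$ the largest non-special vertex stabilizer is of type $A_5 \times A_1$, of order $1440 < 25920 = |W(E_6)|/2$, and $E_7$ is immediate because there $m=1$ forces $W^a_z(L)\cong W(L)$ and hence $z$ special. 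In each case $m\,|W^a_z(L)| < |W(L)|$ for non-special $z$, contradicting the embedding, so $z$ must be special, i.e.\ $z\in L^*$.

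The main obstacle is the case $L = D_4$, where triality enlarges the isometry group so that $m = 6$ rather than $2$, and one must check that the bound still beats $|W(D_4)| = 192$. This works because the unique non-special vertex of the affine diagram of type $D_4$ has stabilizer $(A_1)^4$ of order $16$, giving $m\,|W^a_z(L)| \le 6\cdot 16 = 96 < 192$, while all non-special points on positive-dimensional faces have strictly smaller stabilizers. Securing these inequalities uniformly, and in particular verifying that triality does not spoil the order gap for $D_4$, is the delicate step; once it is in place, no non-special $z$ can admit an embedded copy of $W(L)$, which completes the forward direction and hence the equivalence.
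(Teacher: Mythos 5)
Your backward implication is exactly the paper's argument: holes and lattice points of $L$ are precisely the elements of $L^*$ (the coset decompositions of Appendix~\ref{sec:Appendix:root-lattices}), and Lemma~\ref{lem:special:points:rootlattice} gives $W^a_z(L) \cong W(L)$ inside $\iso(L)_z$. For the forward implication the paper is terse (it defers to the stabilizer computations of Lemma~\ref{lem:deephole:stabilizer}), and your order-counting scheme $|W(L)| \le |\iso(L)_z| \le m\,|W^a_z(L)|$ with $m = [\ort(L):W(L)]$ is a legitimate, purely numerical alternative, including your correct observation that triality forces $m=6$ for $D_4$. However, there is a genuine gap in the step where you reduce the verification to non-special \emph{vertices} of the alcove. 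The parenthetical claim ``points on positive-dimensional faces have smaller stabilizers, so vertices give the extremal case'' is false: the stabilizer of a point in the relative interior of a face is contained in the stabilizer of each vertex of that face, but those vertices may all be \emph{special}, so such a point (which is automatically non-special, since special points are alcove vertices) can have a strictly larger stabilizer than every non-special vertex. Concretely, for $D_n$ with $n \ge 5$ the two alcove vertices obtained by deleting either node of one fork of $\widetilde{D}_n$ are both special; the open edge joining them consists of non-special points whose stabilizer has type $D_{n-1}$ and order $2^{n-2}(n-1)!$, which exceeds your claimed maximum $2^{n-1}(n-2)!$ (type $A_1\times A_1\times D_{n-2}$) whenever $n \ge 4$. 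Likewise for $E_6$ the edge joining two tip-vertices of $\widetilde{E}_6$ has stabilizer of type $D_5$ and order $1920 > 1440$, and for $D_4$ the edge joining two outer vertices of $\widetilde{D}_4$ has stabilizer of type $A_3$ and order $24 > 16$. So the ``short list of numerical comparisons'' as you set it up does not cover all non-special points, and the forward direction is not established.

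The strategy is salvageable: one must maximize $|W^a_z(L)|$ over \emph{all} proper subdiagrams of the extended Dynkin diagram that are not of the full type (equivalently, over all alcove faces whose stabilizer is not isomorphic to $W(L)$), not just over corank-one subdiagrams. The corrected maxima still satisfy the required inequalities, with room to spare: $2\cdot 2^{n-2}(n-1)! < 2^{n-1}n! = |W(D_n)|$ for $n\ge 5$; $6\cdot 24 = 144 < 192 = |W(D_4)|$; and $2\cdot 1920 = 3840 < 51840 = |W(E_6)|$. Your $A_n$ case already argues via faces and is correct, and $E_7$ needs only that $|W^a_z(L)| \le |W(L)|$ with equality exactly at special points, which follows from the injectivity of the projection $W^a(L) \to W(L)$ restricted to point stabilizers (nontrivial translations fix no point). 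With the extremal-case analysis repaired in this way, your order-counting proof of the forward implication goes through and is genuinely more quantitative than the paper's; as written, it is incomplete.
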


\begin{proof}
  The argument is as in the proof of Lemma \ref{lem:deephole:stabilizer}, we get the embedding of $W(L)$ into the stabilizer by Lemma \ref{lem:special:points:rootlattice} for all points of the dual lattice $L^*$.
  We then use the coset-decomposition of $L^*$ as in Appendix \ref{sec:Appendix:root-lattices}, from which we infer that every element of $L^*$ is either a lattice point or a hole of $L$.
  
  Explicitly for $A_n$ we have the decomposition
  \[
    A_n^* = \left([0] + A_n\right) \cup \left([1] + A_n\right) \cup \cdots \cup \left([n] + A_n\right),
  \]
  where $[0],[1],\ldots,[n]$ are the vertices of a fundamental simplex for $W^a(A_n)$.
  The elements $[1],\ldots,[n]$ are precisely the holes of $A_n$ (up to translations by lattice elements).
  
  For $D_n$ we have the decomposition
  \[
    D_n^* = \left([0] + D_n\right) \cup \left([1] + D_n\right) \cup \left([2] + D_n\right) \cup \left([3] + D_n\right),
  \]
  where $[1],[2],[3]$ are the holes of $D_n$. 

  For $E_6$ and $E_7$ it follows similarly, compare the data in Appendix \ref{sec:Appendix:root-lattices}.

\end{proof}

\subsection{Finding explicit $\alpha$} 
In order to find an explicit $\alpha_0$, we need to make explicit the covering strategy of the proof of Theorem~\ref{thm:main} described in Section~\ref{sec:mainproof}.

In the case of root lattices, the Voronoi cell is the union of the images of the fundamental simplex under the action of the Weyl group. 
In the two examples we consider next, we cover this fundamental simplex with exactly two balls.
Here is the strategy we follow in order to show that a given $\alpha_0$ works. 
First, $\alpha_0$ has to fulfill the condition of Lemma~\ref{lem:Ralpha}, and this lemma gives a radius $R_{\alpha_0}$ such that for every $\alpha \geq \alpha_0$, the deep hole $c$ is the unique minimizer for the potential in the ball $B(c,R_{\alpha_0})$. 
Then, the radius $\varrho$ of the second ball $B(0,\varrho)$ is the largest norm among the points in the fundamental simplex that are not covered by $B(c,R_{\alpha_0})$.
Denote by $S$ the intersection of the edges of the simplex with the sphere of radius $R_{\alpha_0}$ centered at $c$, and decompose the set of vertices of the fundamental simplex as $V\cup V'$, where the vertices in $V$ belong to $B(c,R_{\alpha_0})$, and those in $V'$ do not.
Then $B(c,R_{\alpha_0})$ covers the convex hull of $S\cup V$, so it is enough that $B(0,\varrho)$ covers the convex hull of $S\cup V'$, see Figure~\ref{Fig-Cover} for two examples in dimension~$2$.
It remains to check that $\alpha_0 \geq \alpha_\varrho$, where $\alpha_\varrho$ is given by Theorem~\ref{cor:rho}, which, according to Lemma~\ref{thm:ineqBP}, amounts to verifying the inequality
\[
e^{-\alpha_0(\mu(L)^2 - \varrho^2)} \left(\frac{2\alpha_0 \mu(L)^2 e}{n}\right)^{n/2} < 1.
\]

\begin{figure}
\includegraphics[scale=.5]{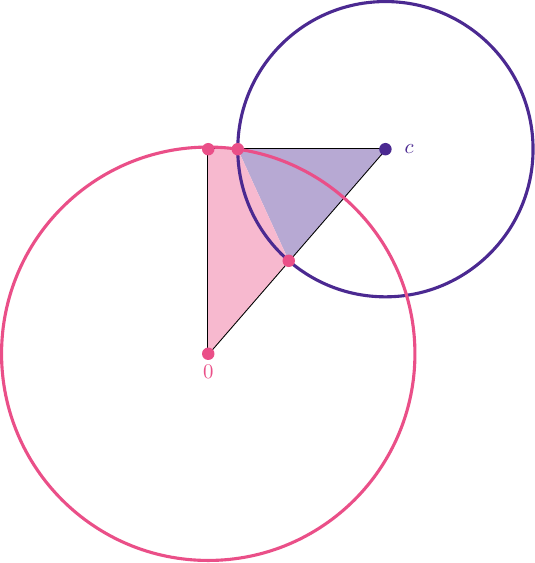} \quad \includegraphics[scale=.5]{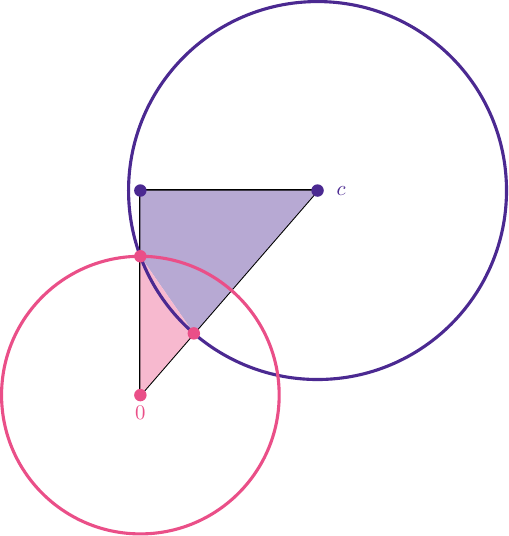} 
\caption{Covering a simplex with two spheres in dimension~2.}\label{Fig-Cover}
\end{figure}

Using this strategy, we can prove:

\begin{theorem}
\label{thm:e8}
Let $L= E_8$. 
Then for every $\alpha \geq 23$, the deep holes are the unique minimizers of $p(f_\alpha,L)$. 
\end{theorem}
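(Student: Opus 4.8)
The plan is to instantiate the two-ball covering strategy described above, using the centrally symmetric branch of Lemma~\ref{lem:Ralpha} for the local bound and Lemma~\ref{thm:ineqBP} for the far-field bound. First I record the data for $E_8$: we have $n = 8$ and $\mu(E_8) = 1$, and the Delaunay polytope at a deep hole $c$ is a cross polytope. Taking $c = e_1$ (so that $0$ is a nearest lattice point), the dilate $2c = 2e_1$ lies in $D_8 \subseteq E_8$, so every shell $L(c,r)$ is centrally symmetric about $c$; by Theorem~\ref{thm:deep-hole-designs} each nonempty shell is moreover a spherical $2$-design. Hence all hypotheses of the centrally symmetric case are met and I may use the sharper estimates of Lemma~\ref{lem:Ralpha}. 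Moreover $E_8$ has a single orbit of deep holes, so the hypotheses of Theorem~\ref{thm:main} hold.

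Next I apply Lemma~\ref{lem:Ralpha} with $n = 8$ and $\mu(L)^2 = 1$. The threshold condition reads $\alpha > 8\left(1/2 + \log(2)^2\right) \approx 7.84$, which is comfortably satisfied for $\alpha \geq 23$, so $c$ is a strict local minimizer and the explicit radius
\[
R_\alpha = \frac{1}{\sqrt{8}} + \sqrt{\frac{1}{8} - \frac{\log 2}{\alpha}}
\]
is available. Since $\alpha \mapsto R_\alpha$ is increasing, it suffices to work with $R_{23} = \tfrac{1}{\sqrt{8}} + \sqrt{\tfrac{1}{8} - \tfrac{\log 2}{23}} \approx 0.66$: by Theorem~\ref{lem:locmin}, $c$ is the unique minimizer of $z \mapsto p(f_\alpha, L, z)$ in $B(c, R_{23})$ for every $\alpha \geq 23$.

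Then I carry out the covering. Since $E_8$ is unimodular, the origin is a special point with stabilizer $\ort(E_8) = W(E_8)$, so $z \mapsto p(f_\alpha, L, z)$ is $W(E_8)$-invariant and $V(L)$ is tiled by the Weyl images of one fundamental simplex, all sharing the vertex $0$ and carrying a deep hole as the opposite far vertex. It therefore suffices to cover this simplex by $B(c, R_{23})$ together with $B(0, \varrho)$ for some $\varrho < \mu(L) = 1$. Following the recipe above, I would list the simplex vertices and their norms, split them into the set $V$ lying inside $B(c, R_{23})$ and the complementary set $V'$, compute the points $S$ where the simplex edges meet the sphere $\partial B(c, R_{23})$, and take $\varrho = \max\{\|p\| : p \in S \cup V'\}$; since $\conv(S \cup V')$ is the extremal data, this guarantees $V(L) \setminus \bigcup_c B(c, R_{23}) \subseteq B(0, \varrho)$.

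Finally I verify that $\alpha_0 = 23$ dominates the far-field threshold $\alpha_\varrho$ of Theorem~\ref{cor:rho}. By Lemma~\ref{thm:ineqBP} it is enough to check
\[
e^{-23(1 - \varrho^2)} \left(\frac{2 \cdot 23 \cdot e}{8}\right)^{4} < 1,
\]
which holds precisely when $\varrho^2 < 1 - \tfrac{1}{23}\log\!\big((46e/8)^4\big) \approx 0.52$, i.e. $\varrho \lesssim 0.72$. The main obstacle is thus the explicit $8$-dimensional geometry: one must pin down the fundamental alcove of $W^a(E_8)$, identify which of its vertices fall within radius $R_{23} \approx 0.66$ of $c$, and confirm that the uncovered piece $\conv(S \cup V')$ has all its extreme points within norm $\lesssim 0.72$ of the origin. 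Once this concrete covering estimate is in place, the conclusion follows by combining Theorem~\ref{lem:locmin} on $\bigcup_c B(c, R_{23})$ with Theorem~\ref{cor:rho} on $B(0, \varrho)$; everything else is a direct substitution into the two lemmas.
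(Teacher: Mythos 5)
Your setup matches the paper's proof exactly: the centrally symmetric branch of Lemma~\ref{lem:Ralpha} (threshold $8(1/2+\log(2)^2)\approx 7.84$, radius $R_{23}\approx 0.66$), a two-ball covering of a fundamental simplex, and the far-field check $e^{-23(1-\varrho^2)}\left(23e/4\right)^{4}<1$, which forces $\varrho\lesssim 0.72$. All these numbers agree with the paper, and your justification that the shells around $c$ are centrally symmetric (via $2c\in L$) and spherical $2$-designs (via Theorem~\ref{thm:deep-hole-designs}) is sound.

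However, there is a genuine gap: you never carry out the covering verification, and that verification is the entire substance of this theorem beyond substituting constants into the two lemmas. You explicitly defer it ("one must pin down the fundamental alcove \ldots and confirm that the uncovered piece $\conv(S\cup V')$ has all its extreme points within norm $\lesssim 0.72$"), so what you have proved is only the local statement on $B(c,R_{23})$ and a conditional far-field statement. The paper does the missing work: it lists the nine vertices $0,v_1,\dots,v_8$ of the fundamental simplex (with $v_8=c$ a deep hole), checks that every vertex except $0$ and $v_7=(0,\dots,0,\tfrac12,\tfrac12)$ lies in $B(c,0.66)$, computes the intersections of the sphere of radius $0.66$ about $c$ with the edges $[0,v_i]$ and $[v_7,v_i]$ for $i\neq 7$, and finds that the point of largest norm lies on the edge $[v_6,v_7]$ with norm $<0.72$; together with $\|v_7\|=\sqrt{1/2}<0.72$ this gives the covering. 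Note there is very little slack: at $\varrho=0.72$ the far-field quantity is roughly $0.92$, and $\|v_7\|\approx 0.707$, so the conclusion genuinely hinges on this explicit $8$-dimensional computation and cannot be waved through. Until that step is done, the theorem is not proved.
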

\begin{proof}
First note that $\tau_L = 1$, and that all the shells around $c$ are centrally symmetric.
One can then check that $\alpha_0 = 23$ satisfies the conditions of Lemma~\ref{lem:Ralpha}, which gives $R_{\alpha_0}>0.66$.
Then, the vertices of the fundamental simplex are $0$ and

\[\begin{array}{lcl}
v_1 =(-\frac{1}{8},\frac{1}{8},\frac{1}{8},\frac{1}{8},\frac{1}{8},\frac{1}{8},\frac{1}{8},\frac{7}{8}) & \quad & v_2 = (0,0,\frac{1}{6},\frac{1}{6},\frac{1}{6},\frac{1}{6},\frac{1}{6},\frac{5}{6})
\\
v_3 = (\frac{1}{6},\frac{1}{6},\frac{1}{6},\frac{1}{6},\frac{1}{6},\frac{1}{6},\frac{1}{6},\frac{5}{6})  & \quad & 
v_4 = (0,0,0,\frac{1}{5},\frac{1}{5},\frac{1}{5},\frac{1}{5},\frac{4}{5})
\\
v_5 = (0,0,0,0,\frac{1}{4},\frac{1}{4},\frac{1}{4},\frac{3}{4}) & \quad & 
v_6 = (0,0,0,0,0,\frac{1}{3},\frac{1}{3},\frac{2}{3}) 
\\
v_7 = (0,0,0,0,0,0,\frac{1}{2},\frac{1}{2}) & \quad & 
v_8 = (0,0,0,0,0,0,0,1)
\end{array}
\]
where $v_8=c$ is a deep hole of $L$. Except $v_7$ and $0$, all the vertices  are covered by $B(c,0.66)$. 
Therefore, the sphere centered at $c$ and of radius $0.66$ intersects  the edges $[0,v_i]$ with $i\neq 7$, and the edges $[v_7,v_i]$, $i \neq 7$. 
Computing these intersections, we find that the point with the largest norm lies on the edge $[v_6,v_7]$, and has norm smaller than $\varrho = 0.72$. 
Since we also have $\| v_7 \| < \varrho$, the fundamental simplex is covered by the union of $B(c,0.66)$ and $B(0,\varrho)$. 
To conclude, we check that for $\alpha_0 = 23$,
\[
e^{- \alpha_0 (1 - \varrho^2)}  \left(\frac{ e \alpha_0}{4}\right)^4 < 1.
\]
\end{proof}

In a similar way, we get

\begin{theorem}
\label{thm:d4}
Let $L= D_4$. 
Then for every $\alpha \geq 5$, the deep holes are the unique minimizers of $p(f_\alpha,L)$. 
\end{theorem}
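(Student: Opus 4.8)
The plan is to follow exactly the covering strategy spelled out just before Theorem~\ref{thm:e8}, now applied to $L = D_4$ instead of $E_8$, and to verify the required numerical inequalities for the concrete value $\alpha_0 = 5$. First I would record the basic data for $D_4$: the covering radius is $\mu(D_4)^2 = 1$ (the deep holes $[1],[2],[3]$ all have squared norm $4/4 = 1$), and the Delaunay polytope around a deep hole is a cross-polytope (for $n=4$ the half-cube and cross-polytope coincide), hence every inhomogeneous shell $L(c,r)$ is centrally symmetric. This places us in the \emph{centrally symmetric} case of Lemma~\ref{lem:Ralpha}, which is what makes $\alpha_0$ small.

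Next I would invoke Lemma~\ref{lem:Ralpha} in its centrally symmetric form. With $n = 4$ and $\mu(L)^2 = 1$, the sufficient condition reads
\[
\alpha > \frac{n}{\mu(L)^2}\left(\tfrac{1}{2} + \log(2)^2\right) = 4\left(\tfrac{1}{2} + \log(2)^2\right) \approx 3.92,
\]
so $\alpha_0 = 5$ comfortably satisfies the hypothesis. Plugging $\alpha = 5$ into
\[
R_\alpha = \frac{\mu(L)}{\sqrt{n}} + \sqrt{\frac{\mu(L)^2}{n} - \frac{\log(2)}{\alpha}} = \frac{1}{2} + \sqrt{\tfrac{1}{4} - \tfrac{\log 2}{5}}
\]
yields an explicit local radius $R_{\alpha_0}$, so that by Theorem~\ref{lem:locmin} the deep hole $c$ is the unique minimizer of $z \mapsto p(f_\alpha, L, z)$ in $B(c, R_{\alpha_0})$ for every $\alpha \geq 5$.

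Then I would carry out the geometric covering step. Working inside the fundamental simplex of $W^a(D_4)$ whose vertices are $0$ and the three holes $[1],[2],[3]$ (together with any intermediate vertices of the Weyl chamber), I would determine which vertices lie in $B(c, R_{\alpha_0})$ and which do not, compute the intersection points $S$ of the simplex edges with the sphere $\partial B(c, R_{\alpha_0})$, and take $\varrho$ to be the largest norm occurring among the uncovered vertices $V'$ together with the points of $S$. By the covering argument, $B(c,R_{\alpha_0})$ covers $\conv(S \cup V)$ and $B(0,\varrho)$ covers $\conv(S \cup V')$, so the two balls cover the whole fundamental simplex, and by Weyl symmetry they cover $V(D_4)$.

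Finally I would close the argument by checking the Bétermin--Petrache threshold. It remains to verify $\alpha_0 \geq \alpha_\varrho$, which by Lemma~\ref{thm:ineqBP} (with $n = 4$, $\mu(L)^2 = 1$) amounts to the single numerical inequality
\[
e^{-\alpha_0(1 - \varrho^2)}\left(\frac{e\,\alpha_0}{2}\right)^{2} < 1
\]
at $\alpha_0 = 5$. Combining Theorem~\ref{cor:rho} on $B(0,\varrho)$ with the local minimality on the union of balls $B(c,R_{\alpha_0})$ then gives $p(f_\alpha, L, z) > p(f_\alpha, L, c)$ for all $z \in V(L) \setminus \{c\}$, i.e. the deep holes are the unique minimizers for every $\alpha \geq 5$. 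I expect the main obstacle to be purely quantitative: $\alpha_0 = 5$ is close to the lower bound $\approx 3.92$ forced by Lemma~\ref{lem:Ralpha}, so the resulting $R_{\alpha_0}$ is modest and $\varrho$ is close to $1$; one must check carefully that the gap $1 - \varrho^2$ is still large enough that the Banaszczyk-type factor $(e\alpha_0/2)^2$ is beaten by $e^{-\alpha_0(1-\varrho^2)}$, which is the tightest step in the whole estimate.
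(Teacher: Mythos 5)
Your analytic ingredients are correct and match the paper: $D_4$ falls into the centrally symmetric case of Lemma~\ref{lem:Ralpha}, the threshold $4(\tfrac12+\log(2)^2)\approx 3.92$ is below $5$, one gets $R_{\alpha_0}=\tfrac12+\sqrt{\tfrac14-\tfrac{\log 2}{5}}\approx 0.83$, and the final B\'etermin--Petrache check indeed takes the form $e^{-\alpha_0(1-\varrho^2)}(e\alpha_0/2)^2<1$. The gap is in your covering step. The fundamental simplex (alcove) of $W^a(D_4)$ has \emph{three} deep holes among its five vertices --- representatives of the classes $[1],[2],[3]$, e.g.\ $(1,0,0,0)$, $\tfrac12(1,1,1,1)$, $\tfrac12(1,1,1,-1)$ --- and these are pairwise at distance $1$. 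Since $R_{\alpha_0}\approx 0.83<1$, a single ball $B(c,R_{\alpha_0})$ around one deep hole leaves the other two deep-hole vertices uncovered; they have norm exactly $\mu(D_4)=1$, so your prescription forces $\varrho\geq \mu(D_4)$. This violates the hypothesis $\varrho<\mu(L)$ of Theorem~\ref{cor:rho}, and the final inequality fails outright: with $\varrho\geq 1$ the left-hand side is at least $(5e/2)^2\approx 46$. So "the two balls cover the whole fundamental simplex" is false for $D_4$, and the difficulty is not, as you anticipated, a tight numerical margin, but that the two-ball covering of the full alcove cannot work at all.

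The paper's proof fixes exactly this point using the extra symmetry of $D_4$ (triality): the alcove is the union of three similar sub-simplices, each containing exactly one deep hole, and these sub-simplices are equivalent under $\ort(D_4)$, which preserves the potential. The two-ball covering is then applied to one such sub-simplex, where it succeeds comfortably with $R_{\alpha_0}=0.8$ and $\varrho=0.35$ (all vertices of the sub-simplex except the origin lie in $B(c,0.8)$). Alternatively one could cover the alcove by balls around \emph{all three} deep holes together with $B(0,\varrho)$, as in the general argument of Section~\ref{sec:mainproof}, but that requires a genuinely different covering computation from the single-ball recipe you describe. Two smaller points: central symmetry of \emph{every} shell $L(c,r)$ follows from $2c\in D_4$ (as noted in Section~\ref{sec:Introduction:highlysymmetric}), not from the first Delaunay polytope being a cross-polytope, which only controls the first shell; and the spherical $2$-design hypothesis needed for Theorem~\ref{lem:locmin} and Lemma~\ref{lem:Ralpha} should be invoked from Theorem~\ref{thm:deep-hole-designs}, which your write-up never cites.
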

\begin{proof}
We follow the same strategy, with the only difference that, due to the additional symmetry of $D_4$, the fundamental simplex under the action of $W(D_4)$ is the union of three similar simplices that are equivalent with respect to the action by $O(D_4)$.
For $R_{\alpha_0} = 0.8$, all the vertices of this simplex except the origin are covered, and by looking at the edges between these vertices and $0$, we find that $\varrho = 0.35$ is sufficient, and allows to apply Theorem~\ref{cor:rho}.
\end{proof}

\section{The lattices $E_6^*$, $E_7^*$, $K_{12}$, and $BW_{16}$}
\label{sec:other-lattices}

For the dual lattices of the exceptional root lattices $E_6^*$, $E_7^*$, the Coxeter-Todd lattice $K_{12}$, and the Barnes-Wall lattice $BW_{16}$ we computed the stabilizer $\iso(L)_c$ in the affine isometry group of the lattice of a deep hole $c$. Then we determined the harmonic Molien series to see which $\iso(L)_c$-invariant harmonic polynomials do not exist, see Table~\ref{table:data-other-lattices}. 
These computations we performed with the help the computer algebra system MAGMA \cite{MAGMA}.  From this we can conclude that in all the four cases the deep holes are stable cold spots.

\begin{table}[htb]
\begin{tabular}{llll}
\hline
lattice & $|\iso(L)_c|$ & harmonic Molien series & strength \\
\hline
\rule{0pt}{4mm}%
$E_6^*$ & $1296$ & $1 + u^3 + u^4  + \cdots$ & $2$\\
$E_7^*$ & $80640$ & $1 + u^4 + 2u^6 + \cdots$ & $3$ \\
$K_{12}$ & $311040$ &  $1 + u^4 + u^5 + \cdots$ & $3$ \\
$BW_{16}$ & $743178240$ &  $1 + u^6 + 2u^8 + \cdots$ & $5$ \\
\hline
\end{tabular}
\medskip
\caption{Data for the lattices $E_6^*$, $E_7^*$, $K_{12}$, and $BW_{16}$. In all cases the inhomogeneous shells around deep holes are designs with strength at least $2$. Therefore, in all four cases stable cold spots are deep holes.}
\label{table:data-other-lattices}
\end{table}

\section{The Leech lattice}
\label{sec:leech-lattice}

The results of this section summarize parts of the master thesis of Ottaviano Marzorati (written under the supervision of the third- and fourth-named author of this paper).

\medskip

The following technical lemma gives the order of inhomogeneous Gaussian lattices sums when $\alpha$ tends to infinity. It will be crucial when analyzing the behavior of $z \mapsto p(f_{\alpha}, \Lambda, z)$ at inequivalent deep holes of the Leech lattice $\Lambda_{24}$. It also can be used to give a proof of Theorem~\ref{thm:stable-critical-points} and also to give an alternative proof of Corollary~\ref{cor:Betermin-Petrache}.

\begin{lemma}
\label{lem:estimate}
  Let $L$ be an $n$-dimensional lattice and let $z \in \mathbb{R}^n$ be any point. Then,
  \[
  \sum_{x \in L, \|x-z\| \geq r_0} e^{-\alpha \|x-z\|^2} =  \Theta(e^{-\alpha r_0^2} \alpha^{-(n-1)/2}).
  \]
  \end{lemma}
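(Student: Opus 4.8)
The plan is to separate the exponential rate $e^{-\alpha r_0^2}$ from its polynomial correction and to prove matching upper and lower bounds for each. After translating so that the relevant point is the origin of $L-z$ and substituting $v=\sqrt{\alpha/\pi}\,x$, the tail sum becomes
\[
\sum_{x\in L,\ \|x-z\|\ge r_0} e^{-\alpha\|x-z\|^2}
=\sum_{v\in \sqrt{\alpha/\pi}\,(L-z),\ \|v\|\ge \sqrt{\alpha/\pi}\,r_0} e^{-\pi\|v\|^2},
\]
so that the cut-off radius grows like $\sqrt{\alpha}$. This is precisely the situation of Banaszczyk's estimate, Lemma~\ref{lem:Bana}(a), which I would use to dominate the part of the sum coming from lattice points far beyond $r_0$, showing it is negligible compared with the contribution of the innermost shells.

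For the lower bound I would group the summands into the shells $L(z,r)$ and retain only those of smallest radius. Writing $r_0 \le \rho_1<\rho_2<\cdots$ for the radii actually attained, the ratio of the shell at $\rho_k$ to that at $\rho_1$ is $e^{-\alpha(\rho_k^2-\rho_1^2)}$ times a ratio of shell cardinalities; since these cardinalities grow only polynomially in $\rho_k$ while the exponentials decay, the tail beyond $\rho_1$ is controlled by a convergent geometric-type series and the whole sum is comparable to its leading shells. This already isolates the exponential rate and reduces the problem to counting how many lattice points sit in a thin annulus just outside $r_0$.

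The polynomial factor is then obtained by comparing the lattice sum with the Gaussian integral over $\{y:\|y-z\|\ge r_0\}$. In polar coordinates this integral is $\omega_{n-1}\int_{r_0}^\infty e^{-\alpha r^2}r^{n-1}\,dr$, whose large-$\alpha$ asymptotics follow from Watson's lemma (equivalently, from the asymptotics of the incomplete Gamma function), the mass concentrating in an annulus around $r_0$ whose width shrinks as $\alpha\to\infty$. To transfer this to the lattice I would need two-sided bounds on the shell-counting function $\bigl|\{x\in L: r\le\|x-z\|\le r+\delta\}\bigr|$ in terms of the Euclidean volume of the annulus, uniform in the relevant range of $r$ and $\delta$, and the power of $\alpha$ in the final answer is dictated by this volume comparison.

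The main obstacle is exactly this last transfer: matching the discrete count to the continuous volume with enough uniformity in $\alpha$ to pin down the precise power of $\alpha$, not merely the exponential rate. The upper bound on the count can be read off from the Banaszczyk estimate above, while for the lower bound I would pack disjoint fundamental domains into a slightly enlarged annulus; the delicate point is that both bounds must carry the same power of $\alpha$, so the implied constants in the volume comparison have to be controlled independently of $\alpha$.
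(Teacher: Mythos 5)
Your second paragraph is the right argument, and it already settles the matter---but it settles it \emph{against} the formula you are trying to prove, which is why the ``main obstacle'' in your last paragraph cannot be overcome. Grouping into shells and comparing everything to the leading shell (the cardinalities $|L(z,\rho_k)|$ grow polynomially in $\rho_k$, so for $\alpha\geq 1$ one has $\sum_{k\geq 2}|L(z,\rho_k)|e^{-\alpha(\rho_k^2-\rho_1^2)}\leq C e^{-(\alpha-1)(\rho_2^2-\rho_1^2)}\to 0$) gives
\[
\sum_{x\in L,\ \|x-z\|\geq r_0}e^{-\alpha\|x-z\|^2}
=|L(z,\rho_1)|\,e^{-\alpha\rho_1^2}\bigl(1+o(1)\bigr),
\qquad \alpha\to\infty,
\]
where $\rho_1$ is the smallest attained distance $\geq r_0$. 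This is $\Theta(e^{-\alpha\rho_1^2})$ with \emph{no} power of $\alpha$. Consequently, if $L(z,r_0)\neq\emptyset$ (so $\rho_1=r_0$), the sum exceeds the claimed bound by a factor $\alpha^{(n-1)/2}$, so the stated formula fails for $n\geq 2$; and if $L(z,r_0)=\emptyset$, the sum is exponentially smaller than claimed. The count-to-volume transfer you were hoping for is therefore not a technicality but genuinely false in this regime: the Gaussian mass concentrates in an annulus about $r_0$ whose width is of order $1/\alpha$, which eventually contains only the shell $L(z,\rho_1)$; its cardinality stays constant while the Euclidean volume of the annulus tends to $0$, so no two-sided comparison with constants uniform in $\alpha$ can exist. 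Your Banaszczyk step is a valid upper bound for the far tail, but it cannot repair this.

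For comparison, the paper's own proof is essentially the continuum identification you were attempting: Riemann--Stieltjes summation against the counting function $A_R=\sum_{r_0\leq r\leq R}|L(z,r)|$, the estimate $A_R=\Theta(R^n)$, then Laplace's method applied to $\alpha\int_{r_0}^{\infty}\Theta(r^{n+1})e^{-\alpha r^2}\,dr$. It trips exactly where you predicted: after substituting $s=r^2$ and shifting, it replaces $(s+r_0^2)^{(n-1)/2}$ by $\Theta(s^{(n-1)/2})$, which is false near $s=0$ (the shifted integrand is bounded below by $r_0^{n-1}>0$ there), and $s\approx 0$ is precisely the region that dominates as $\alpha\to\infty$; evaluated correctly, that integral is $\Theta(\alpha^{-1})$, and the method also returns $\Theta(e^{-\alpha\rho_1^2})$ with no polynomial factor. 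Moreover, $A_R=\Theta(R^n)$ holds only for $R$ large, not uniformly down to $R=r_0$, which is the same uniformity defect you flagged. Fortunately nothing downstream is lost: the lexicographic comparison of deep holes of the Leech lattice in Section~\ref{sec:leech-lattice} uses only the exponential order and the leading coefficient $|L(z,\rho_1)|$, which is exactly what the corrected statement provides. So the right move is to drop the volume comparison and the factor $\alpha^{-(n-1)/2}$ altogether, and to promote your second paragraph, made quantitative as above, to the whole proof.
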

  
  \begin{proof}
  For $R_0 \geq 0$ consider the partial sum
  \[
  \sum_{r_0 \leq r \leq R_0} |L(z,r)| e^{-\alpha r^2}
  \]
  of the inhomogeneous Gaussian lattice sum $\sum_{x \in L} e^{-\alpha \|x-z\|^2}$. Using the Riemann-Stieltjes integral one can show (see for instance \cite[Appendix A]{Montgomory-Vaughan-2012}) that this partial sum equals
  \[
  A_{R_0} e^{-\alpha R_0^2} - \lim_{\varepsilon \to 0+} \int_{r_0 - \varepsilon}^{R_0} A_r (-2\alpha r) e^{-\alpha r^2 } \, dr,
  \]
  where
  \[
  A_R = \sum_{r_0 \leq r \leq R} |L(z,r)|
  \]
  is the cumulative step function of $|L(z,r)|$. When $r_0$ is fixed, it is known that $A_R = \Theta(R^n)$\footnote{Actually much more precise information about the growth of $A_R$ is known. This question is closely related to the Gauss circle problem. However, for our purpose, the given rough estimate suffices.}.
  
  Letting $R_0$ tend to infinity, this gives
  \[
  \sum_{x \in L, \|x-z\| \geq r_0} e^{-\alpha\|x-z\|^2} = \alpha \int_{r_0}^\infty \Theta(r^{n+1}) e^{-\alpha r^2} \, dr.
  \]
  Now we estimate the integral $\int_{r_0}^\infty r^{n+1} e^{-\alpha r^2} \, dr$ for $\alpha \to \infty$ using Laplace's method. Substituting $r^2 = s$ yields
  \[
  \int_{r_0}^\infty r^{n+1} e^{-\alpha r^2} \, dr = \frac{1}{2}\int_{r_0^2}^\infty s^{(n-1)/2} e^{-\alpha s} \, ds.
  \]
  Furthermore, using the Gamma function, we get
  \[
  \begin{split}
  \frac{1}{2}\int_{r_0^2}^\infty s^{(n-1)/2} e^{-\alpha s} \, ds 
  & = \frac{1}{2} e^{-\alpha r_0^2} \int_0^\infty (s+r_0^2)^{(n-1)/2} e^{-\alpha s} \, ds \\
  & = e^{-\alpha r_0^2} \int_0^\infty \Theta(s^{(n-1)/2}) e^{-\alpha s} \, ds \\
  & = e^{-\alpha r_0^2} \, \Theta(\alpha^{-(n-3)/2}). \qedhere
  \end{split}
  \]
  \end{proof}

Let $L$ be an $n$-dimensional lattice and let $z \in \R^n$ be any point. Define
  \[
  0 \leq r_1 < r_2 < \ldots
\] 
so that $L(z,r_n)$, with $n = 1, 2, \ldots$, are all inhomogenous lattice shells around $z$. Define 
\[
a_1 = |L(z,r_1)|, \; a_2 = |L(z,r_2)|, \ldots.
\]
This gives rise to an order on the points $z \in \R^n$: We set $z > z'$ if and only if
\[
(r_1,a_1, r_2, a_2, \ldots) <_{\text{lex}} (r'_1,a'_1, r'_2, a'_2, \ldots)
\]
in the following lexicographic order
\[
(r_1 < r'_1) \text{ or } (r_1 = r'_1 \text{ and } a_1 > a'_1) 
\text{ or } (r_1 = r'_1 \text{ and } a_1 = a'_1 \text{ and } r_2 < r'_2) \text{ or } \ldots
\]
Then by Lemma~\ref{lem:estimate} we have $p(f_{\alpha}, L, z) > p(f_{\alpha}, L, z')$ for large enough $\alpha$ if and only if $z > z'$.

\smallskip

We apply this criterion to the deep holes of the Leech lattice. The deep holes of the Leech lattice were classified by Conway, Parker, Sloane \cite[Chapter 23]{Conway1988a}. There are $23$ inequivalent deep holes of $\Lambda_{24}$ which can be determined with the help of extended Coxeter Dynkin diagrams. In Table~\ref{table:Lambda24-deep-holes} we list all necessary information about them. From this table one sees that for large $\alpha$, cold spots are converging to deep holes of the type $A_{24}$. 
The inhomogenous shell $\Lambda_{24}(A_{24}, \sqrt{52/25})$ does not form a spherical $1$-design and so Theorem~\ref{thm:stable-critical-points} implies that $\Lambda_{24}$ does not have stable cold spots, which proves Theorem~\ref{thm:leech-lattice}.

\begin{table}
\begin{tabular}{lll}
\hline
deep hole & $(r_1, a_1, r_2, \ldots)$ & strength\\
\hline
\rule{0pt}{4mm}%
$A_{24}$ & $(\sqrt{2}, 25, \sqrt{52/25}, \ldots)$ & $1$ \\
$D_{24}$ & $(\sqrt{2}, 25, \sqrt{47/23}, \ldots)$ & $0$ \\

$A^2_{12}$ & $(\sqrt{2}, 26, \sqrt{28/13}, \ldots)$ & $2$ \\
$A_{15} D_9$ & $(\sqrt{2}, 26, \sqrt{17/8}, \ldots)$ & $0$ \\
$A_{17} E_7$ & $(\sqrt{2}, 26, \sqrt{19/9}, \ldots)$ & $0$ \\
$D^2_{12}$ & $(\sqrt{2}, 26, \sqrt{23/11}, \ldots)$ & $0$ \\
$D_{16} E_8$ & $(\sqrt{2}, 26, \sqrt{31/15}, \ldots)$ & $0$ \\

$A^2_9 D_6$ & $(\sqrt{2}, 27, \sqrt{11/5},  \ldots)$ & $0$ \\
$A^3_8$ & $(\sqrt{2}, 27, \sqrt{20/9}, \ldots)$ & $1$ \\
$A_{11} D_7 E_6$ & $(\sqrt{2}, 27, \sqrt{13/6}, \ldots)$ & $0$ \\
$D^3_8$ & $(\sqrt{2}, 27, \sqrt{15/7}, \ldots)$ & $0$ \\
$E^3_8$ & $(\sqrt{2}, 27, \sqrt{32/15}, \ldots)$ & $0$ \\
$D_{10} E^2_7$ & $(\sqrt{2}, 27, \sqrt{19/9}, \ldots)$ & $0$ \\

$A^4_6$ & $(\sqrt{2}, 28, \sqrt{16/7}, \ldots)$ & $1$ \\
$A^2_7 D^2_5$ & $(\sqrt{2}, 28, \sqrt{9/4}, \ldots)$ & $0$ \\
$D^4_6$ & $(\sqrt{2}, 28, \sqrt{11/5}, \ldots)$ & $0$ \\
$E^4_6$ & $(\sqrt{2}, 28, \sqrt{13/6}, \ldots)$ & $0$ \\

$A^4_5 D_4$ & $(\sqrt{2}, 29, \sqrt{7/3}, \ldots)$ & $0$ \\

$A^6_4$ & $(\sqrt{2}, 30, \sqrt{12/5}, \ldots)$ & $1$ \\
$D^6_4$ & $(\sqrt{2}, 30, \sqrt{7/3}, \ldots)$ & $0$\\

$A^8_3$ & $(\sqrt{2}, 32, \sqrt{5/2}, \ldots)$ & $1$\\
$A^{12}_2$ & $(\sqrt{2}, 36, \sqrt{8/3}, \ldots)$ & $2$\\
$A^{24}_1$ & $(\sqrt{2}, 48, \sqrt{3}, \ldots)$ & $3$\\
\hline     
\end{tabular}
\medskip
\caption{The deep holes of the Leech lattice. The design strength refers to the design strength of the first inhomogeneous lattice shell $\Lambda_{24}(c,\sqrt{2})$ of the deep hole $c$.}
\label{table:Lambda24-deep-holes}
\end{table}

\section*{Acknowledgements}

The authors like to thank Markus Faulhuber, Stefan Steinerberger, Ottaviano Marzorati for various fruitful discussions. F.V. thanks Universit\'e Toulouse-Jean Jaur\`es for its hospitality during several productive stays.

\appendix

\section{Indecomposable root lattices} \label{sec:Appendix:root-lattices}

In this appendix we put together some well known facts on root lattices and their duals, their Voronoi cells and the fundamental simplex of the associated affine Weyl groups.
This presentation closely follows \cite[Ch. 4]{Conway1988a}.

Firstly, by \cite[Ch. 21, Theorem 5]{Conway1988a} the Voronoi cell of a root lattice can be obtained as the union of the images of fundamental simplex (of the associated affine Weyl group) under the action of the (linear) Weyl group. 
In particular all holes of a root lattice are vertices of the fundamental simplex.
Note that this does not imply that every vertex of the fundamental simplex is a hole, this is, trivially, false for the vertex $0$ (which is a lattice point), but it can also be wrong for vertices distinct from $0$ (for example in the root systems of type $E$).

For an indecomposable root lattice $L$ we will provide the follwing data
\begin{enumerate}
  \item a standard representation of $L$,
  \item the dual lattice $L^*$ and the quotient $L^* / L$ to identify the $L$-translation classes of points to which Lemma \ref{lem:special:points:rootlattice} is applicable.
\end{enumerate}

\subsection{The $A_n$ series}

For $n \geq 1$ the root system $A_n$ generates the $n$-dimensional root lattice
\[
  A_n = \lset x \in \Z^{n+1} : e \cdot x = 1 \rset \subseteq \R^{n+1}.
\]
The quotient $A_n^* / A_n$ is cyclic of order $n+1$. A typical set of representatives for the elements of this quotient is given by the vectors
\[
  [i] = ( \underbrace{\tfrac{i}{n+1},\cdots,\tfrac{i}{n+1}}_{n+1-i \text{ times}},\underbrace{-\tfrac{n+1 -i}{n+1},\cdots,-\tfrac{n+1 - i}{n+1}}_{i \text{ times}}) \in A_n^*,
\]
where $i \in \lset 1,\ldots,n+1 \rset$. 
The vectors $[0],[1],\ldots,[n]$ are the vertices of a $n$-dimensional simplex and in fact that simplex is a fundamental simplex for the action of $W(A_n)^a$. 
Applying Lemma \ref{lem:special:points:rootlattice} we have that up to translation by vectors in $A_n$ these compose the set of special points for $W^a(A_n)$.

The vectors $[1],\ldots,[n]$ are all holes of $A_n$ (and each hole of $A_n$ is one of these up to a translation by some lattice element).
Of particular interest is the fact that up to the action of $W^a(A_n)$ there is only one deep hole in $A_n$, it is given by the orbit of $[a]$, where $a = \lfloor \tfrac{n+1}{2} \rfloor$.

\subsection{The $D_n$ series}

For $n \geq 3$ the root system $D_n$ generates the $n$-dimensional root lattice
\[
  D_n = \lset x \in \Z^n : e \cdot x \equiv_2 0 \rset.
\]
Note that $D_3 \cong A_3$.

The quotient $D_n^* / D_n$ is cyclic of order $4$ if $n$ is odd and noncyclic if order $4$ of $n$ is even. A typical set of representatives is given by the elements
\begin{align*}
  [0] &= (0,0,\ldots,0) \text{ of squared norm } 0\\
  [1] &= (\tfrac{1}{2},\tfrac{1}{2},\ldots,\tfrac{1}{2}) \text{ of squared norm } n/4\\
  [2] &= (0,0,\ldots,1) \text{ of squared norm } 1\\
  [3] &= (\tfrac{1}{2},\tfrac{1}{2},\ldots,-\tfrac{1}{2}) \text{ of squared norm } n/4.
\end{align*}
For $n=3$ there are two types of holes, given by the orbits of $[1]$ and $[3]$ (shallow) and the orbit of $[2]$ (deep).
For $n=4$ there is only one type of hole, given by the orbits of $[1]$, $[2]$, and $[3]$. 
For $n > 4$ there are two types of holes, given by the orbit of $[2]$ (shallow) and the orbits of $[1]$ and $[3]$ (deep).
Applying Lemma \ref{lem:special:points:rootlattice} we have that the set of special points for $W^a(D_n)$ consists of the lattice points $D_n$ and all holes.

For $n \geq 5$ the orthogonal group $\ort(D_n)$ is generated by $W(D_n) \cong \lset \pm 1 \rset^m_+ \rtimes S_n \cong \lset \pm 1 \rset^{n-1} \rtimes S_n$, where $\lset \pm 1 \rset^n_+$ operates by an even number of sign changes on the coordinates, and additionally the sign flip on the last coordinate (this symmetry interchanges $[1]$ and $[3]$).

The case $n =4$ is special.
In this case there is an additional symmetry given explicitely by the Hadamard matrix 
\begin{equation*}
  H_4 = \frac{1}{2} \begin{pmatrix}
    1 & 1 & 1 & 1\\
    1 & -1 & 1 & -1\\
    1 & 1 & -1 & -1\\
    1 & -1 & -1 & 1\\
  \end{pmatrix}.
\end{equation*}
Now the full orthogonal group is generated by $W(D_4)$, the sign flip on the last coordinate and the Hadamard symmetry. We denote by $\ort'(D_4)$ the subgroup only generated by $W(D_4)$ and the sign flip on the last coordinate. Then $[\ort(D_4):\ort'(D_4)] = 4$ and $[\ort'(D_4):W(D_4)]  = 2$.

\subsection{The $E_n$ series}

The $E_n$ series consists of the three lattices $E_6$ ,$E_7$, and $E_8$.

\subsubsection{The lattice $E_8$:} A common description of $E_8$ is in terms of the $D_n^+$ series and reads
\begin{align*}
  E_8 &= D_8^+ = D_8 \cup \tfrac{1}{2}e + D_8 \\
  &= \lset x \in \R^8 : e \cdot x \equiv_2 0, \text{ so that } x\in \Z^8 \text{ or } x \in \tfrac{1}{2}e + \Z^8 \rset.
\end{align*}
This defines an even unimodular $n$-dimensional lattice. 

There are two types of holes in $E_8$, given by the orbit of 
\[
  (0,0,0,0,0,0,0,1) \text{ of squared norm } 1
\]
(deep) and the orbit of
\[
  (\frac{1}{6},\frac{1}{6},\frac{1}{6},\frac{1}{6},\frac{1}{6},\frac{1}{6},\frac{1}{6},\frac{5}{6}) \text{ of squared norm } 8/9
\]
(shallow). It will turn out (by direct computation) that the deep holes of $E_8$ still are somehow ``special'', with stabilizer isomorphic to $W(D_8)$ (c.f. Lemma \ref{lem:deephole:stabilizer} \eqref{lem:deephole:stabilizer:E}).

\subsubsection{The lattice $E_7$:}
The lattice $E_7$ can be obtained as a $7$-dimensional sublattice of $E_8$, one common such description is 
\[
  E_7 = \lset x \in E_8 : v \cdot x = 0 \rset,
\]
where $v$ is any of the minimal vectors of $E_8$.
For explicit computations we describe $E_8$ as before and choose the minimal vector $v = \tfrac{1}{2} e$. Then 
\[
  E_7 = \lset x \in E_8 : e \cdot x = 0 \rset.
\]

The quotient $E_7^* / E_7$ is cyclic of order $2$. A typical set of representatives is given by the elements
\begin{align*}
  [0] &= (0,0,0,0,0,0,0,0) \text{ of squared norm } 0\\
  [1] &= (\tfrac{1}{4},\tfrac{1}{4},\tfrac{1}{4},\tfrac{1}{4},\tfrac{1}{4},\tfrac{1}{4},-\tfrac{3}{4},-\tfrac{3}{4}) \text{ of squared norm } 3/2.
\end{align*} 
The orbit of $[1]$ gives all deep holes of $E_7$ and this is the only type of hole.
Applying Lemma \ref{lem:special:points:rootlattice} we have that the set of special points for $W^a(E_7)$ consists of the lattice points $E_7$ and all holes.

\subsubsection{The lattice $E_6$:}
The lattice $E_6$ can be obtained as a $6$-dimensional sublattice of $E_8$, one common such description is 
\[
  E_6 = \lset x \in E_8 : v \cdot x = 0 \text{ for all } v \in V \rset,
\]
where $V$ is any sublattice of $E_8$ isomorphic to $A_2$.
For explicit computations we describe $E_8$ as above and choose the sublattice $\langle e_1+e_8, \tfrac{1}{2}e \rangle_\Z \cong A_2$. Then 
\[
  E_6 = \lset x \in E_8 : e \cdot x = 0 \rset.
\]

The quotient $E_6^* / E_6$ is cyclic of order $3$. A typical set of representatives is given by the elements
\begin{align*}
  [0] &= (0,0,0,0,0,0,0,0) \text{ of squared norm } 0\\
  [1] &= (0,-\tfrac{2}{3},-\tfrac{2}{3},\tfrac{1}{3},\tfrac{1}{3},\tfrac{1}{3},\tfrac{1}{3},0) \text{ of squared norm } 4/3\\
  [2] &= -[1] \text{ of squared norm } 4/3.
\end{align*} 
The orbit of $[1]$ gives all deep holes of $E_6$ and this is the only type of hole.
Applying Lemma \ref{lem:special:points:rootlattice} we have that the set of special points for $W^a(E_6)$ consists of the lattice points $E_6$ and all holes.

\section{Some background on Weyl groups and root systems} \label{sec:Appendix:weyl}
We base the subsequent treatise on affine Weyl groups on \cite{Bourbaki1968}. We use slightly adapted notation and base all objects in the same vector space, instead of using the dual space, as would be common in the study of root systems. 
Let $R$ be a (reduced) root system in the real vector space $\R^n$.
To $R$ we can associate another root system, the \emph{inverse root system} (or system of coroots) $R^{\vee}$ comprising the elements $\alpha^\vee = \tfrac{2}{\|\alpha\|^2} \alpha$.
These sets induce lattices, the \emph{root lattice} 
\[
  L_R = \langle R \rangle_\Z 
\]
the \emph{coroot lattice}
\[
  L_{R^{\vee}} = \langle R^\vee \rangle_\Z
\]
and the \emph{weight lattice}
\[
  L_{R^{\vee}}^* = \lset x \in \R^n : x \cdot v \in \Z \text{ for all } v \in L_{R^\vee} \rset.
\]
which is the dual lattice of $L_{R^\vee}$ in the usual sense. 

Note that in the language of root systems the elements of $L_R$ are called the radical weights of the root system $R$, while the elements of the weight lattice $L_{R^\vee}^*$ are called weights.
To a root system we can associate its Weyl group 
\[
  W(R) = \langle s_\alpha : \alpha \in R \rangle,
\]
where $s_\alpha$ is the reflection along $\alpha$, that is $s_\alpha(\alpha) = -\alpha$ and $s_\alpha(x) = x$ for $x \in \alpha ^\perp$.
The associated affine Weyl group is
\[
  W(R)^a = W(R) \rtimes T(L_{R^{\vee}}),
\]
where $T(L_{R^{\vee}})$ is the set of translations induced by elements of $L_R^{\vee}$ (see \cite[Ch. VI, \S 2, no. 1.]{Bourbaki1968}).

In the context of an affine Weyl group $W^a$ a point $z \in \R^n$ is called \emph{special} for $W^a$ if and only if it satisfies a number of equivalent conditions, in particular if and only if $W_z^a \cong W(R)$ (c.f. \cite[Ch. V, \S 3, no. 10.]{Bourbaki1968}).

% In the context of affine Weyl groups and root systems a point $z \in \R^n$ is called \emph{special} (c.f. \cite[Ch. V, \S 3, no. 10.]{Bourbaki1968}) for $W^a$ if
% \[
%   W(R)^a = W^a_z \rtimes T(L_{R^\vee}), 
% \]
% where $W^a_z$ is the stabilizer of $z$ in $W(R)^a$.

By \cite[Ch. VI, \S 2, Proposition 3]{Bourbaki1968} the special points of the affine Weyl group $W^a(R)$ of a root system $R$ are precisely the elements of the weight lattice of $R^\vee$, which is the lattice $L_R^*$.

We finally note that if $L$ is root lattice as in \eqref{eq:root:lattice} the underlying root system $R = L(2)$ is self dual, that is $R^\vee = R$, and all of the considerations above become a bit simpler because already $L = L_R = L_{R^\vee}$.

\end{document}